\newtheorem{theorem}{Theorem}[section]
\newtheorem{thm}[theorem]{\bf{Theorem}}
\newtheorem{cor}[theorem]{\bf{Corollary}}
\newtheorem{prop}[theorem]{\bf{Proposition}}
\newtheorem{defn}[theorem]{\bf{Definition}}
\newtheorem{remark}[theorem]{\bf{Remark}}
\newtheorem{ex}[theorem]{\bf{Example}}
\numberwithin{equation}{section}
\begin{document}
%------------------------------------------------------------------------------------%

%------------------------------------------------------------------------------------%
\title[\tiny{A Weak Form of Amenability of Topological Semigroups and its Applications}]{A Weak Form of Amenability of Topological Semigroups and its Applications in Ergodic and Fixed Point Theories}
\author{Ali Jabbari$^1$}
\address{Young Researchers and Elite Club, Islamic Azad University, Ardabil Branch}
\email{jabbari\underline{ }al@yahoo.com}
\author{Ali Ebadian$^2$}
\address{
 $^{2}$Department of Mathematics, Urmia University  \newline
\indent  Tehran, Iran}

\email{ebadian.ali@gmail.com}
\author{Madjid Eshaghi Gordji$^3$}
\address{$^3$Department of Mathematics, Semnan University\newline
\indent P. O. Box 35195-363, Semnan, Iran}
\email{madjid.eshaghi@gmail.com}

%-------------------------------------------------------------------------------------------------------------------------------------------------------
%-------------------------------------------------------------------------------------------------------------------------------------------------------
\subjclass[2010]{Primary: 43A07, 22D15, Secondary: 22A20}

\keywords{Character, Character amenability, Ergodic theory, Fixed point property, Hahn-Banach property, semi-character, semigroup}
%-------------------------------------------------------------------------------------------------------------------------------------------------------
%-------------------------------------------------------------------------------------------------------------------------------------------------------

\maketitle
\begin{abstract}
In this paper, we introduce a weak form of amenability on  topological semigroups that we call $\varphi$-amenability, where $\varphi$ is a character on a topological semigroup. Some basic properties of this new notion are obtained and by giving some examples, we show that this definition is weaker than the amenability of  semigroups. As a noticeable result,  for a topological semigroup $S$,  it  is shown that if $S$ is $\varphi$-amenable, then  $S$ is amenable.   Moreover,  $\varphi$-ergodicity for a topological semigroup $S$ is introduced and it is proved that under some conditions on  $S$ and a   Banach space $X$,  $\varphi$-amenability and  $\varphi$-ergodicity of any antirepresntation defined by a right action $S$ on $X$, are equivalent.   A relation between  $\varphi$-amenability of topological semigroups and existance of a common fixed point is investigated and by this relation,   Hahn-Banach property of topological semigroups in the sense of $\varphi$-amenability defined and studied.
\end{abstract}

\tableofcontents

%------------------------------------------------------------------------------------%

\pagestyle{myheadings}

%------------------------------------------------------------------------------------%

\section{ Introduction}

%------------------------------------------------------------------------------------%
Let $S$ be a semigroup. A character (semi-character) on $S$ is a nonzero map $\varphi:S\longrightarrow\mathbb{T}$ ($S\longrightarrow\mathbb{D}$) such that
 $\varphi(st)=\varphi(s)\varphi(t)$, for all $s,t\in S$. We denote the space of characters (semi-characters) on $S$ by $\Delta_S(S)$ ($\Phi_S$) and $\Delta_S(S)\subset\Phi_S$.

For every $\varphi\in\Phi_S$, the map $\widehat{\varphi}:\ell^1(S)\longrightarrow\mathbb{C}$ defined by $\widehat{\varphi}(f)=\sum_{s\in S}\varphi(s)f(s)$ ($s\in S$,  $f\in\ell^1(S)$),  is the character on $\ell^1(S)$, and indeed all of characters on $\ell^1(S)$ constructed by this method, see \cite{dls, rot} for more details.

Let $_\varphi\mathcal{M}^S$ be the class of Banach $S$-bimodules such as $X$ for which the left module action of $S$ on $X$ is given by $s\cdot x=\varphi(s)x$, for all $s\in S$ and $x\in X$. Similarly,   $\mathcal{M}_\varphi^S$ is the class of Banach $S$-bimodules $X$ for which the right module action of $S$ on $X$ is given by $x\cdot s=\varphi(s)x$, for all $s\in S$ and $x\in X$.

Let $C(S)$ be the Banach algebra of complex valued continuous bounded functions on $S$. We can consider  $C(S)$ as a member of  $_\varphi\mathcal{M}^S$ and  $\mathcal{M}_\varphi^S$, because  $\varphi$ is continuous. A function $f\in C(S)$ is called  left uniformly continuous if $\lim_\alpha \|s_\alpha\cdot f-s\cdot f\|_\infty=0$, whenever $s_\alpha\longrightarrow s$. We denote the Banach algebra of all right (left) uniformly continuous functions on $S$ by $RUC(S)$ ($LUC(S)$). Let $E$ be a linear subspace of $C(S)$ which contains the constant function $1_S$. A mean on $E$ is a functional $m\in E^*$ such that $m(1_S)=\|m\|=1$. If $E$ is closed under module actions, then the mean $m$ is called left (right) invariant if $s\cdot m=m$ ($m\cdot s=m$), for all $s\in S$.

Let $S$ be a topological semigroup, for $s\in S$,  the left translation $l_s$ of $C(S)$ by $s$ is defined by $l_sf(s')=f(ss')$, for all $f\in C(S)$ and $s'\in S$ and the right one denoted by $r_s$ such that $r_sf(s')=f(s's)$.

A semigroup $S$ is called left (right) amenable if there is a left (right) invariant mean on $RUC(S)$ ($LUC(S)$), i.e., there is a linear functional $m$ in $RUC(S)^*$ ($LUC(S)^*$) such that $m(l_s f)=m(f)$ ($m(r_sf)=m(f)$, for all $f\in RUC(S)$ ($LUC(S)$) and $s\in S$. Moreover, $S$ is called amenable if it is both left and right amenable.

Let $S$ be a topological semigroup and let $X$ be a Banach $S$-bimodule. A bounded derivation is a weak$^*$-continuous map $D:S\longrightarrow X^*$, such that $D(st)=s\cdot D(t)+D(s)\cdot t$, for all $s,t\in S$, and $\sup_{s\in S}\|D(s)\|<\infty$. The bounded derivation $D$ is called principle, if there is an element $f\in X^*$ such that $D(s)=s\cdot f-f\cdot s=\emph{\emph{ad}}_f(s)$, for all $s\in S$. If every derivation on semigroup $S$ is principle, then $S$ is called Johnson amenable. The Johnson amenability of semigroups and groups studied in \cite{pm}.

The amenability of (topological) semigroups and topological groups have  close connections with amenability of Banach algebras defined on semigroups and groups. A well-known result related to these connections, is the Johnson Theorem \cite{jo1}: the locally compact group $G$ is amenable if and only if $L^1(G)$ is amenable. A Banach algebra $\mathfrak{A}$ is said to be amenable  if, for any  Banach $\mathfrak{A}$-bimodule $X$, every continuous derivation $D:\mathfrak{A}\longrightarrow X^*$ is inner.

Let $\mathfrak{A}$ be  a Banach algebra and $\sigma(\mathfrak{A})$ is the carrier space of $\mathfrak{A}$, and  $\varphi\in\sigma(\mathfrak{A})$ is a  homomorphism from $\mathfrak{A}$ onto $\mathbb{C}$.   Assume that $\varphi\in\sigma(\mathfrak{A})\cup\{0\}$ and $X$ is an arbitrary Banach space, then $X$ can be viewed as Banach left or right $\mathfrak{A}$-module by the following actions
\begin{equation}\label{}
    \nonumber
    a\cdot x=\varphi(a)x\hspace{1cm}\emph{\emph{and}}\hspace{1cm}x\cdot a=\varphi(a)x\hspace{1cm}(a\in\mathfrak{A}, x\in X).
\end{equation}

The Banach algebra $\mathfrak{A}$ is said to be left character amenable (LCA), if
for all $\varphi\in\sigma(\mathfrak{A})\cup\{0\}$ and  Banach $\mathfrak{A}$-bimodules such as $X$ for which the left module action is
given by $a\cdot x=\varphi(a)x$ ($a\in\mathfrak{A}, x\in X$), every continuous derivation $D:\mathfrak{A}\longrightarrow X^*$ is inner.
Right character amenability (RCA) is defined similarly by considering Banach $\mathfrak{A}$-bimodules such as $X$ for which the left module action is given by $x\cdot a=\varphi(a)x$, and $\mathfrak{A}$ is called character amenable (CA) if it is both left and right character amenable. The notion of character amenability of Banach algebras was defined by
Sangani Monfared in \cite{mo1} and the concept of $\varphi$-amenability of Banach algebras introduced by Kaniuth and et al. in \cite{ka}.

Let $S$ be a topological semigroup and $0\neq\varphi\in\Delta_S(S)$. This paper, considers the concept of left (right) $\varphi$-amenability
 of  topological semigroup $S$ and some notions that have connections with it.

In section 2, we introduce left (right) $\varphi$-amenability of  topological semigroups and show that this new notion is different from  amenability of semigroups (groups). Moreover, some results about relations between $\varphi$-amenability of semigroups (groups) and character amenability of semigroup algebras (group   algebras) are obtained.

In section 3, we considers some hereditary properties of $\varphi$-amenability and define the strongly left (right)  $\varphi$-amenability on subsemigroups of semigroups where with this definition, we show that strongly left (right)  $\varphi$-amenability of a  left (right) thick subsemigroup implies that the left (right)  $\varphi$-amenability of a semigroup and vice versa.

Section 4 deals with left $\varphi$-egodicity that we introduce it in that section and investigate some relations between  left  $\varphi$-amenability of  $S$ and left $\varphi$-egodicity. Moreover, we obtain a  characterization of
 $\varphi$-amenability of  $S$ in terms of antirepresentations of $S$ on a Banach space.

 Finally, in section 5, we study of the existence of a common
 Fixed point in compact convex sets that $S$ has continuous affine actions on. Moreover, we define Hahn-Banach Property related to $0\neq\varphi\in\Delta_S(S)$ and as an interesting result, we characterize $\varphi$-amenability of  topological semigroups.

%------------------------------------------------------------------------------------------------------------------------------------------------------%
%------------------------------------------------------------------------------------------------------------------------------------------------------%
%------------------------------------------------------------------------------------------------------------------------------------------------------%
\section{$\varphi$-Amenability }
%------------------------------------------------------------------------------------------------------------------------------------------------------%
%------------------------------------------------------------------------------------------------------------------------------------------------------%
%------------------------------------------------------------------------------------------------------------------------------------------------------%
Let $S$  be a topological semigroup and $\varphi\in\Delta_S(S)$. In this section, we study  left (right) $\varphi$-amenability of the semigroup $S$ and obtain  some necessary and sufficient related to left (right) $\varphi$-amenability of $S$ such as Theorem \ref{fs1}.  We start this section with the new definition as follows
\begin{defn}
Let $S$ be a topological semigroup and $0\neq\varphi\in\Delta_S(S)$.  We say that
\begin{itemize}
  \item[(i)] $S$ is left  $\varphi$-amenable if, for each Banach $S$-bimodule $X\in _\varphi\mathcal{M}^S$, every bounded derivation $D:S\longrightarrow X^*$ is principle.
  \item[(ii)]  $S$ is right  $\varphi$-amenable if, for each Banach $S$-bimodule $X\in\mathcal{M}_\varphi^S$, every bounded derivation $D:S\longrightarrow X^*$ is principle.
  \item[(iii)]   $S$ is  $\varphi$-amenable  if it is both left and right  $\varphi$-amenable.
\end{itemize}
\end{defn}
In this paper, we suppose that $0\neq\varphi\in\Delta_S(S)$. The following result is one of the main results of this paper, indeed, this paper results are depend on.

\begin{thm}\label{t1}
Let $S$ be a topological semigroup and $\varphi\in\Delta_S(S)$. Then the following statements are equivalent
\begin{itemize}
  \item[(i)] $S$ is left \emph{(}right\emph{)} $\varphi$-amenable;
  \item[(ii)] there is a bounded linear functional $m$ in $RUC(S)^*$ \emph{(}$LUC(S)^*$\emph{)}, such that $m(\varphi)=1$ and  $m(f\cdot s)=\varphi(s)m(f)$ $(m(s\cdot f)=\varphi(s)m(f))$, for all $s\in S$ and $f\in RUC(S)$ \emph{(}$f\in LUC(S)$\emph{)}.
\end{itemize}
\end{thm}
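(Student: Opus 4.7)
The plan is to mirror the Kaniuth--Lau--Pym characterization of $\varphi$-amenability of Banach algebras and adapt it to the topological semigroup setting with $RUC(S)$ and $LUC(S)$. I will prove the left version; the right version is completely analogous after swapping $RUC(S)\leftrightarrow LUC(S)$ and $l_s \leftrightarrow r_s$. Under the paper's conventions, the mean condition $m(f\cdot s)=\varphi(s)m(f)$ reads as $m(l_s f)=\varphi(s)m(f)$ on $RUC(S)$.

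For (ii) $\Rightarrow$ (i), suppose $m$ is as in (ii). Given $X\in{}_\varphi\mathcal{M}^S$ and a bounded weak$^*$-continuous derivation $D:S\to X^*$, for each $x\in X$ set $h_x(s):=D(s)(x)$. Because $s\cdot x=\varphi(s)x$, the induced dual actions on $X^*$ are $(s\cdot F)(x)=F(x\cdot s)$ and $F\cdot s=\varphi(s)F$, so the derivation identity, as a function of $t$ with $s$ fixed, becomes
$$l_s h_x \;=\; h_{x\cdot s} \;+\; h_x(s)\,\varphi.$$
After verifying that $h_x\in RUC(S)$, I apply $m$ and use $m(l_s f)=\varphi(s)m(f)$ and $m(\varphi)=1$ to obtain $h_x(s)=\varphi(s)m(h_x)-m(h_{x\cdot s})$. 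Setting $\Psi(x):=-m(h_x)\in X^*$, this is precisely $D(s)(x)=\Psi(x\cdot s)-\varphi(s)\Psi(x)=(s\cdot\Psi-\Psi\cdot s)(x)$, so $D$ is principal.

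For (i) $\Rightarrow$ (ii), the trick is to encode the normalization $m(\varphi)=1$ into the choice of module. Since $\mathbb{C}\varphi$ is a sub-bimodule of $RUC(S)$ (because $s\cdot\varphi=\varphi\cdot s=\varphi(s)\varphi$), the quotient $X:=RUC(S)/\mathbb{C}\varphi$ lies in ${}_\varphi\mathcal{M}^S$, with dual canonically identified as $X^*=\{F\in RUC(S)^*:F(\varphi)=0\}$. Pick, via Hahn--Banach, some $\psi_0\in RUC(S)^*$ with $\psi_0(\varphi)=1$, and define
$$D(s)(f)\;:=\;\psi_0(l_s f)-\varphi(s)\psi_0(f).$$
One checks that $D(s)(\varphi)=0$ and that $D$ is a bounded weak$^*$-continuous derivation into $X^*$. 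Left $\varphi$-amenability then yields $N\in X^*$ with $D(s)=s\cdot N-\varphi(s)N$, which unwinds to $(\psi_0-N)(l_s f)=\varphi(s)(\psi_0-N)(f)$. Thus $m:=\psi_0-N\in RUC(S)^*$ satisfies the translation identity, and crucially, because $N\in X^*$ forces $N(\varphi)=0$, we obtain $m(\varphi)=\psi_0(\varphi)=1$ for free.

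The main obstacle I anticipate is technical rather than structural: verifying $h_x\in RUC(S)$ in the direction (ii) $\Rightarrow$ (i) and weak$^*$-continuity of $D$ in the direction (i) $\Rightarrow$ (ii). Both depend on the interplay between the norm-continuity of $s\mapsto l_s f$ and the paper's precise definition of $RUC(S)$; the algebraic and module-theoretic identities collapse cleanly once this regularity bookkeeping is in place.
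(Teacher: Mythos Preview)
Your proposal is correct and follows essentially the same route as the paper: for (i)$\Rightarrow$(ii) you quotient $RUC(S)$ by $\mathbb{C}\varphi$, take an inner derivation induced by a Hahn--Banach functional $\psi_0$ with $\psi_0(\varphi)=1$, and set $m=\psi_0-N$; for (ii)$\Rightarrow$(i) you push $D$ to functions $h_x\in RUC(S)$, derive the identity $l_s h_x=h_{x\cdot s}+h_x(s)\varphi$, and average with $m$ to exhibit $D$ as principal. The paper carries out the verification that $h_x\in RUC(S)$ explicitly (using weak$^*$-continuity of $D$ and continuity of the right action on $X$), which is exactly the bookkeeping you flagged as the main obstacle.
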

\begin{proof}
$(\emph{\emph{i}})\Longrightarrow(\emph{\emph{ii}})$ Let $S$ be left $\varphi$-amenable. Continuity of $\varphi$ implies that the Banach algebra $RUC(S)$ is a Banach $S$-bimodule via
\begin{equation*}
    (s\cdot f)(t)=\varphi(s)f(t)\hspace{0.5cm}\emph{\emph{and}}\hspace{0.5cm}(f\cdot s)(t)=f(st),
\end{equation*}
for all $s,t\in S$ and $f\in RUC(S)$. Clearly, every $\varphi$ belongs to $RUC(S)$. It follows that $\varphi\cdot s=\varphi(s)\varphi$, for all $s\in S$. Moreover,
\begin{eqnarray*}
% \nonumber to remove numbering (before each equation)
  (\varphi\cdot s)(t) &=& \varphi(s)\varphi(t)=\varphi(t)\varphi(s)= \varphi(ts)\\
   &=&  \varphi(s)\varphi(t),
\end{eqnarray*}
for all $s,t\in S$. Therefore, for every $s\in S$,
\begin{equation}\label{e1}
    s\cdot\varphi=\varphi\cdot s=\varphi(s)\varphi.
\end{equation}

Thus $\mathbb{C}\varphi$ is a closed $S$-subbimodule of $RUC(S)$. Consider the quotient Banach $S$-bimodule $X=RUC(S)/\mathbb{C}\varphi$.  Then put $Y=X^*\cong\{f\in RUC(S)^*:f(\varphi)=0\}\subseteq RUC(S)^*$. Let $\Phi_0\in RUC(S)^*\setminus Y$  such that $\Phi_0(\varphi)=1$ and let $\delta_{\Phi_0}:S\longrightarrow Y$ be as follows
\begin{equation*}
     \delta_{\Phi_0}(s) =s\cdot\Phi_0-\Phi_0\cdot s=s\cdot\Phi_0-\varphi(s)\Phi_0.
\end{equation*}

 Clearly, $ \delta_{\Phi_0}$ is a bounded derivation.  On the other hand,  $S$ is left $\varphi$-amenable, then  there is an element $\mathfrak{x}$ in $Y$ such that
\begin{equation}\label{e2}
    s\cdot\Phi_0-\varphi(s)\Phi_0=\delta_{\Phi_0}(s)=s\cdot\mathfrak{x}-\varphi(s)\mathfrak{x},
\end{equation}
for all $s\in S$. Let $m={\Phi_0}-\mathfrak{x}$. Clearly $m\in RUC(S)^*$, $m(\varphi)=1$ and \eqref{e2} implies $m(f\cdot s)=\varphi(s)m(f)$, for all $s\in S$ and $f\in RUC(S)$. Similarly, we can prove  right $\varphi$-amenability.

 {(ii)}$\Longrightarrow${(i)} Let  $X\in~_\varphi\mathcal{M}^S$ and  $D:S\longrightarrow X^*$ be a bounded derivation. For any $x\in X$, we  define $\omega_x:S\longrightarrow \mathbb{C}$ by $\omega_x(s)=(D(s))(x)$. Since $D$ is bounded,  $\omega_x$ is bounded and continuous. Let $t_\alpha\longrightarrow t$ in $S$. Then
\begin{eqnarray}\label{e3}
\nonumber
  \|\omega_x\cdot t_\alpha-\omega_x\cdot t\|_\infty &=& \sup_{s\in S}|\omega_x(t_\alpha s)-\omega_x(ts)|=\sup_{s\in S}|(D(t_\alpha s))(x)-(D(ts))(x)| \\
  \nonumber
   &\leq&  \sup_{s\in S}|\varphi(s)(D(t_\alpha)-D(t))(x)|+\sup_{s\in S}|(D(s))(x\cdot t_\alpha -x\cdot t)|\\
   &&\longrightarrow0.
\end{eqnarray}

Therefore, (\ref{e3}) follows that $\omega_x\in RUC(S)$. Let $m$ be a linear functional in $RUC(S)^*$ which satisfies  (ii). Define a linear functional $f\in X^*$ by $m(\omega_x)=f(x)$, for all $x\in X$. Since $D$ is a derivation,  we have
\begin{eqnarray}\label{e4}
\nonumber
   \omega_{x\cdot s}(t)&=& (D(t))(x\cdot s)=(s\cdot D(t))(x)=(D(st))(x)-(\varphi(t)D(s))(x) \\
   &=& \omega_x(st)- (D(s))(x)\varphi(t)=(\omega_x\cdot s-(D(s))(x)\varphi)(t),
\end{eqnarray}
for all $s,t\in S$ and $x\in X$. Thus (\ref{e4}) implies that $\omega_{x\cdot s}=\omega_x\cdot s-(D(s))(x)\varphi$. Then
\begin{eqnarray}\label{e5}
\nonumber
   (\varphi(s)f-s\cdot f)(x)&=& \varphi(s)f(x) -f(x\cdot s)=\varphi(s)m(\omega_x)-m(\omega_{x\cdot s})\\
   \nonumber
   &=& \varphi(s)m(\omega_x)-m(\omega_x\cdot s-(D(s))(x)\varphi) \\
   \nonumber
   &=&  \varphi(s)m(\omega_x)-m(\omega_x\cdot s)+(D(s))(x)m(\varphi)\\
   &=&(D(s))(x),
\end{eqnarray}
for all $s\in S$ and $x\in X$. Hence, $D(s)=s\cdot(-f)-\varphi(s)(-f)$, for all $s\in S$. This means that $D$ is principle and $S$ is left $\varphi$-amenable.
\end{proof}

The following example shows that a semigroup $S$ may be $\varphi$-amenable ($\varphi\in\Delta_S(S)$) but it is not amenable, and thereby, we show that the space of $\varphi$-amenable semigroups is wider than  amenable semigroups.
\begin{ex}
\begin{itemize}
  \item[(i)] Let $S$ be  a left or a right cancellative semigroup with  the identity element $e$ such that, for each $s\neq e$, $st=\varphi(t)s$, where $\varphi\in\Delta_S(S)$ and let $\dim\ell^1(S)\geq2$. Let $\{\mathcal{V}_\alpha\}$ be a collection of neighborhood basis for $e$. We construct a net $(v_\alpha)_\alpha$ from $\{\mathcal{V}_\alpha\}$ such that $v_\alpha\in\mathcal{V}_\alpha$ and $\lim_\alpha\varphi(v_\alpha)=1$. By passing
into a suitable bounded subnet $(\eta_\alpha)_\alpha\subseteq(v_\alpha)_\alpha$, we can find an element $\mu\in\beta S$ $(Stone-\check{C}ech$ compactification of $S)$ such that $\mu=\lim_\alpha\eta_\alpha$. The space $\beta S$ is homeomorphic to the character space of $\ell^\infty(S)$. Without loss of generality, we can suppose that $\mu$ belongs to character space of $\ell^\infty(S)$ and hence it belongs to $RUC(S)^*$. It is easy to check that $\mu$ satisfies  condition  \emph{(ii)} of Theorem \ref{t1}. Thus $S$ is left $\varphi$-amenable, but $S$ is not right amenable because $\ell^1(S)$ is not  amenable   \cite[Theorem 2.3]{gr}.
  \item[(ii)] Let $S$ and $T$ be semigroups. Suppose that $S$ acts on $T$ on the left; i.e., assume that there is a semigroup homomorphism $\tau$ from $T$ to $\mathrm{End}(S)$, the set of endomorphisms on $S$,  such that, for each $t\in T$ there exists $\tau_t : S\longrightarrow S$ such that $\tau_{t_1}(\tau_{t_2}(s))=\tau_{t_1t_2}(s)$, for all $t_1,t_2\in T$. Then $S\rtimes_\tau T$ is called the semidirect product of $S$ and $T$ with respect to $\tau$.  If $S\rtimes_\tau T$ is the semigroup consisting of elements of the form $(s,t)$, where $s\in S$ and $t\in T$ equipped with multiplication given by
      $$(s_1,t_1)(s_2,t_2)=(s_1\tau_{t_1}(s_2),t_1t_2),$$
      for all $(s_1,t_1),(s_2,t_2)\in S\rtimes_\tau T$. The amenability of semidirect product of two semigroups is investigated by Klawe \cite{kl}.

      Let  $S$ be a left amenable unital semigroup with the identity element $e$ such that consists of at least two elements, $m_S$ be a left invariant mean for $S$ and  $T$ be a semigroup.  Define $\tau$ from $T$ into $\mathrm{End}(S)$ by $\tau_t(s)=e$, for all $s\in S$ and $t\in T$. Thus, $(s_1,t_1)(s_2,t_2)=(s_1,t_1t_2),$
      for all $(s_1,t_1),(s_2,t_2)\in S\rtimes_\tau T$. Let $\varphi\in\Delta_T(T)$, then $\widetilde{\varphi}(s,t)=\varphi(t)$ is a character on $S\rtimes_\tau T$.  Assume that $T$ is $\varphi$-amenable.  According to \cite[Remark 3.6]{kl}, $S\rtimes_\tau T$ is not left amenable (even $T$ is left amenable). For every $f\in RUC(S\rtimes_\tau T)$, define $f_t(s)=f(s,t)$, for all $s\in S$ and $t\in T$. Clearly, $f_t\in RUC(S)$ and define $g(t)=m_S(f_t)$,  for every  $t\in T$. Thus, according to definition, we have $g\in RUC(T)$. Since $T$ is left $\varphi$-amenable, Theorem \ref{t1} implies that there exists $m_\varphi$ on $RUC(T)$ that satisfies in the stated conditions. Now define $\mathbf{m}(f)=m_\varphi(g)$, for every $f\in RUC(S\rtimes_\tau T)$. Then
      \begin{eqnarray*}
      % \nonumber to remove numbering (before each equation)
       \mathbf{m}(f\cdot(s,t))  &=& m_\varphi[m_S((f \cdot(s,t))(x,y))]=m_\varphi[m_S(f(s,ty))]\\
         &=&  m_\varphi[m_S(f_{ty}(s))]= m_\varphi[g(ty)]= m_\varphi[(g\cdot t)(y)]\\
         &=&\varphi(t)m_\varphi(g)=\varphi(t)\mathbf{m}(f)\\
         &=&\widetilde{\varphi}(s,t)\mathbf{m}(f),
      \end{eqnarray*}
    for all $f\in RUC(S\rtimes_\tau T)$ and   $(s,t)\in S\rtimes_\tau T$. The above obtained equalities imply that $\mathbf{m}(\widetilde{\varphi})=1$. This means that $S\rtimes_\tau T$ is left $\widetilde{\varphi}$-amenable.
\end{itemize}
\end{ex}

For a semigroup $S$, by $1_S$ we mean the constant function that $1_S(s)=1$, for every $s\in S$. Clearly,   $1_S\in\Delta_S(S)$ and we have the following result that gives a relationship between left (right) $1_S$-amenability and left (right)  amenability of $S$.
\begin{thm}\label{tt2}
Let $S$ be a topological semigroup, then $S$ is right \emph{(}left\emph{)} $1_S$-amenable if and only if $S$ is  left \emph{(}right\emph{)} amenable.
\end{thm}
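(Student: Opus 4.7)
The plan is to apply Theorem \ref{t1} with the trivial character $\varphi = 1_S$ and identify the resulting invariance with the classical translation invariance defining amenability. Indeed, since $1_S(s) = 1$ for every $s \in S$, the twisted formula $m(f \cdot s) = \varphi(s) m(f)$ from Theorem \ref{t1}(ii) degenerates to the genuine invariance $m(f \cdot s) = m(f)$, and similarly for the $m(s \cdot f) = m(f)$ version. Once this reduction is performed, the correspondence between $1_S$-amenability and amenability follows by matching the module actions on $RUC(S)$ and $LUC(S)$ with the translation operators $l_s$ and $r_s$ used to define amenability.

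For the forward implication, suppose $S$ is right $1_S$-amenable. Theorem \ref{t1}(ii) yields a bounded linear functional $m \in LUC(S)^*$ with $m(1_S) = 1$ and $m(s \cdot f) = m(f)$ for every $s \in S$ and every $f \in LUC(S)$. Making $LUC(S)$ a member of $\mathcal{M}_{1_S}^S$ by the natural left action $(s \cdot f)(t) = f(ts)$, this invariance is precisely the translation invariance appearing in the paper's definition of (left) amenability. The residual step is to promote $m$ from a mere bounded functional with $m(1_S) = 1$ to an invariant mean satisfying $\|m\| = 1$. I would handle this by the standard positivity/averaging argument: the set
\[
K = \{\mu \in LUC(S)^* : \mu(1_S) = 1,\ \mu(s \cdot f) = \mu(f)\ \text{for all}\ s, f\}
\]
is weak-$*$ closed and convex, and a Day-type averaging (or a Hahn–Banach separation against the positive cone) produces a positive element of $K$, which, combined with $\mu(1_S) = 1$, automatically has norm one and hence is an invariant mean.

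For the converse, a left invariant mean $m$ on $RUC(S)$ already satisfies $m(1_S) = 1$ and $m(l_s f) = m(f) = 1_S(s) m(f)$, so Theorem \ref{t1}(ii) applies with $\varphi = 1_S$ and gives right $1_S$-amenability. The parenthetical statement (left $1_S$-amenable iff right amenable) follows by the symmetric argument with $RUC(S)$ and $LUC(S)$ swapped. The principal obstacle of the whole proof is the upgrade from an invariant bounded functional to an invariant mean in the forward direction; this is where the one genuinely nontrivial step sits, but it should be accomplishable by the standard convexity arguments outlined above.
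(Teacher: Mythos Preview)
Your reduction via Theorem \ref{t1} is a legitimate shortcut and does land you where you say it does: a bounded translation-invariant functional $m$ with $m(1_S)=1$. But the step you yourself flag as ``the one genuinely nontrivial step''---upgrading this functional to an honest mean (positive, norm one)---is where your proposal has a real gap. Neither ``Day-type averaging'' nor ``Hahn--Banach separation against the positive cone'' produces a positive element of your set $K$: Day's argument passes from approximate invariance to exact invariance and says nothing about positivity, and a separation argument would require you to derive a contradiction from $K$ missing the positive cone, which you have not indicated how to do. The paper handles this step concretely and by a different mechanism: it works directly from the derivation definition (rather than through Theorem \ref{t1}), obtains a nonzero invariant functional $h\in RUC(S)^*$, then invokes the Gelfand representation $RUC(S)\cong C(\Omega)$ to view $h$ as a complex regular Borel measure on a compact Hausdorff space, and sets $m=|h|/|h|(\Omega)$. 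The invariance of the total variation $|h|$ (using that each translation acts as a $*$-homomorphism of $C(\Omega)$) is what delivers positivity. You should either adopt this total-variation argument or supply a genuine substitute; as written, the forward implication is incomplete.

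A secondary point: keep careful track of $LUC$ versus $RUC$ and of the module actions. In the paper's conventions left amenability means an $l_s$-invariant mean on $RUC(S)$, whereas Theorem \ref{t1} for the right-$\varphi$ case places $m$ in $LUC(S)^*$ with invariance under the action $(s\cdot f)(t)=f(ts)$; your forward and converse paragraphs swap these spaces without comment, so even modulo the upgrade step the conclusions do not yet match the stated definitions.
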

\begin{proof}
Let $S$ be right  $1_S$-amenable and let $RUC(S)\in~_{1_S}\mathcal{M}^S$. Let $X=RUC(S)/\mathbb{C}1_S$, then $X\in~_{1_S}\mathcal{M}^S$ and $X^*$ is canonically isometrically isomorphic with the
submodule $(\mathbb{C}1_S)^\bot=\{f\in RUC(S)^*: f(1_S)=0\}$. For $f\in RUC(S)^*\setminus (\mathbb{C}1_S)^\bot$, define $D:S\longrightarrow RUC(S)^*$ by $D(s)=s\cdot f-f\cdot s$. It is  easy to check that $D$ is a derivation. Since $RUC(S)\in~_{1_S}\mathcal{M}^S$,  $D(s)=s\cdot f-1_S(s)f=s\cdot f-f$, for every $s\in S$. Right $1_S$-amenability of $S$ implies that there is an element $g\in(\mathbb{C}1_S)^\bot$ such that $D(s)=s\cdot g-g$, for all $s\in S$. Now, define $h=g-f$. Obviously, $h\neq0$ and $s\cdot h=h$. This means that $h$ is left $S$-invariant on $RUC(S)$.
The Banach algebra $RUC(S)$ is a $C^*$-subalgebra of $\ell^\infty(S)$, and Gelfand's Theorem implies that there is a compact Hausdorff space $\Omega$ such that $RUC(S)$ is isometrically $*$-isomorphic to $C(\Omega)$ as $C^*$-algebras and $S$-bimodules. Hence, we can suppose that $h$ as a $S$-invariant complex Borel regular measure on $\Omega$. Let $|h|$ be the total variation measure of $h$. Now, define $m=|h|/|h|(\Omega)$, which $m$ is a left $S$-invariant mean on $RUC(S)$. In the other words, $S$ is left amenable.

Conversely, suppose that $S$ is left amenable. Let $m$ be a left invariant mean for $S$. Let $\omega_x$ and $f\in X^*$ be as in the proof of Theorem \ref{t1}. By (\ref{e4}) we have
\begin{equation}\label{ett1}
    \omega_{x\cdot s}(t)=(\omega_x\cdot s-(D(s))(x))\varphi_S(t)=(\omega_x\cdot s-(D(s))(x)1_S)(t)
\end{equation}
for all $s,t\in S$ and $x\in X$. Thus (\ref{ett1}) implies that $\omega_{x\cdot s}=\omega_x\cdot s-(D(s))(x)1_S$. Then by a similar argument in (\ref{e5}), we conclude that $D$ is principle.
\end{proof}
\begin{defn}
Let $S$ be a topological semigroup and $\Delta_S(S)$ be the character space of $S$. We say that
\begin{itemize}
  \item[(i)]  $S$ is left  character amenable if, for each $\varphi\in\Delta_S(S)$ and  Banach $S$-bimodule $X\in\mathcal{M}_\varphi^S$, every bounded derivation $D:S\longrightarrow X^*$ is principle.
  \item[(ii)]  $S$ is right  character amenable if, for each $\varphi\in\Delta_S(S)$ and  Banach $S$-bimodule $X\in~_\varphi\mathcal{M}^S$, every bounded derivation $D:S\longrightarrow X^*$ is principle.
  \item[(iii)]  $S$  is character amenable  if it is both left and right  character amenable.
\end{itemize}
\end{defn}
In the above statements, if $\varphi\in\Phi_S$, then we say that $S$ is left (right) semi-character amenable.
Now, we consider character amenability of topological semigroups and we obtain the following result by Theorem \ref{tt2}.
\begin{cor}\label{t2}
Let $S$ be a topological semigroup. If $S$ is right \emph{(}left\emph{)} character amenable, then $S$ is  left \emph{(}right\emph{)} amenable.
\end{cor}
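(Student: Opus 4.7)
The plan is to derive this corollary as an immediate specialization of Theorem \ref{tt2}. The key observation is that the constant function $1_S$, defined by $1_S(s)=1$ for all $s\in S$, belongs to $\Delta_S(S)$, so the universal quantifier ``for each $\varphi\in\Delta_S(S)$'' in the definition of right (respectively left) character amenability may be instantiated at the trivial character. This instantly reduces character amenability to $1_S$-amenability, at which point Theorem \ref{tt2} closes the argument.

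Concretely, I would begin by assuming $S$ is right character amenable and unwinding the definition: for every $\varphi\in\Delta_S(S)$ and every Banach $S$-bimodule $X\in~_\varphi\mathcal{M}^S$, every bounded derivation $D:S\longrightarrow X^*$ is principle. Specializing to $\varphi=1_S$ yields precisely the condition that $S$ be right $1_S$-amenable in the sense of Definition~2.1, and Theorem \ref{tt2} then delivers that $S$ is left amenable. The parenthetical statement is proved by the same argument with the roles of left and right exchanged throughout, and with the module class $~_{\varphi}\mathcal{M}^S$ replaced by $\mathcal{M}_{\varphi}^S$.

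I expect no substantive obstacle, since the proof reduces to two facts already in hand: the trivial character always lies in $\Delta_S(S)$, and Theorem \ref{tt2} has already handled the $\varphi=1_S$ case. The only care required is bookkeeping: one must confirm that the module-class condition obtained by setting $\varphi=1_S$ in the definition of right (left) character amenability coincides exactly with the module-class condition appearing in the definition of right (left) $1_S$-amenability. This matching is immediate from inspection, so the corollary is genuinely a corollary and its proof should occupy only a few lines.
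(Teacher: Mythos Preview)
Your approach is exactly the paper's: the paper offers no proof beyond the remark that the result follows from Theorem~\ref{tt2}, which is precisely your specialization to $\varphi=1_S$.

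One caution on the bookkeeping you yourself flagged as ``the only care required'': in the paper's Definition~2.5 the module class $_\varphi\mathcal{M}^S$ is attached to \emph{right} character amenability, whereas in Definition~2.1 that same class defines \emph{left} $\varphi$-amenability. So, taken literally, specializing right character amenability to $1_S$ yields \emph{left} $1_S$-amenability, not right as you wrote. This left/right swap between the two definitions is almost certainly a typo in the paper (the natural reading, and the one needed for the corollary as stated to go through via Theorem~\ref{tt2}, is that right character amenable means right $\varphi$-amenable for every $\varphi$); but since you explicitly asserted that the matching ``is immediate from inspection,'' you should state which convention you are adopting so that the chain right character amenable $\Rightarrow$ right $1_S$-amenable $\Rightarrow$ left amenable closes without ambiguity.
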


\begin{cor}\label{c1}
Let $S$ be a topological semigroup. If $S$ is character amenable then $S$ is amenable.
\end{cor}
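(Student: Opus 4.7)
The plan is to obtain the conclusion as an immediate consequence of Corollary~\ref{t2} by applying it in both of its variants. The observation driving the proof is that the hypothesis---character amenability of $S$---by definition means $S$ is simultaneously left character amenable and right character amenable, so both parenthetical forms of Corollary~\ref{t2} are available.

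First I would use the right character amenability half of the hypothesis, which via Corollary~\ref{t2} yields that $S$ is left amenable. Then I would use the left character amenability half, which via the same corollary yields that $S$ is right amenable. Combining these two conclusions gives that $S$ is both left and right amenable, which is exactly the definition of amenability adopted in the introduction of the paper.

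No real obstacle arises, since the substantive work has been done upstream: Theorem~\ref{tt2} establishes the equivalence between $1_S$-amenability and ordinary amenability (on the appropriate sides), and Corollary~\ref{t2} packages this specialization to the trivial character $\varphi = 1_S$, which is a character on any topological semigroup and is therefore covered automatically by the character amenability hypothesis. The only tacit verification is that $1_S \in \Delta_S(S)$, which is immediate from the multiplicativity $1_S(st) = 1 = 1_S(s)1_S(t)$, so the corollary follows in a single line of reasoning.
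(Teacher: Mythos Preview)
Your proposal is correct and matches the paper's approach exactly: the paper states this corollary without proof, since it follows immediately from Corollary~\ref{t2} applied in both parenthetical forms, precisely as you describe.
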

\begin{cor}
Let $S$ be a unital and left or right cancellative semigroup. If $S$ is character amenable then $\ell^1(S)$ is amenable.
\end{cor}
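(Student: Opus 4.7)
The plan is to chain two facts: first, a transition from character amenability of $S$ to classical amenability of $S$; and second, a transition from amenability of a unital one-sided cancellative semigroup to amenability of its convolution algebra $\ell^1(S)$.

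The first transition is immediate from Corollary \ref{c1}, which asserts exactly that character amenability of $S$ forces $S$ to be amenable in the classical two-sided sense. Concretely, one uses that $1_S \in \Delta_S(S)$, so character amenability delivers $1_S$-amenability on both sides, and Theorem \ref{tt2} then converts this into two-sided amenability of $S$. Hence the hypothesis ``$S$ character amenable'' may be replaced by ``$S$ amenable'' for the remainder of the argument.

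The second transition is the semigroup analogue of Johnson's theorem that the authors already invoked earlier in the paper (in the converse direction, inside the Example in Section~2), namely Theorem~2.3 of \cite{gr}: for a unital left- or right-cancellative amenable semigroup $S$, the convolution algebra $\ell^1(S)$ is amenable. Conceptually, cancellation together with one-sided amenability furnishes an Ore-type condition, so $S$ embeds as a subsemigroup of an amenable group of fractions $G$; Johnson's classical theorem \cite{jo1} then promotes amenability of $G$ to amenability of $L^1(G)$, from which $\ell^1(S)$ inherits amenability via the induced embedding.

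The main obstacle I expect is precisely this second step. The cancellation hypothesis is genuinely indispensable: it is what underwrites the construction of the amenable group envelope of $S$, and without it the amenability of $S$ cannot in general be parlayed into amenability of $\ell^1(S)$. The first transition, by contrast, is a bookkeeping exercise once Corollary~\ref{c1} is in hand. Assembling the two transitions in order yields the corollary.
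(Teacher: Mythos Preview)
Your proposal is correct and follows exactly the paper's own proof: invoke Corollary~\ref{c1} to pass from character amenability of $S$ to amenability of $S$, then apply \cite[Theorem 2.3]{gr} to conclude that $\ell^1(S)$ is amenable. The extra commentary you give about the Ore condition and the group of fractions is a helpful unpacking of what underlies Gr{\o}nb{\ae}k's result, but the logical skeleton is identical to the paper's two-line argument.
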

\begin{proof}
Corollary \ref{c1} implies that $S$ is amenable and   \cite[Theorem 2.3]{gr} completes the proof.
\end{proof}

Now, this question arises that:\emph{ when character amenability of a semigroup $S$ and character amenability of $\ell^1(S)$ are equivalent}? At this time we do not know in general case, but we have the following result for discrete semigroups.
\begin{thm}
Let $S$ be a discrete semigroup. Then $S$ is $\varphi$-amenable if and only if $\ell^1(S)$ is $\widehat{\varphi}$-amenable.
\end{thm}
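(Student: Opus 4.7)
The plan is to translate both notions of amenability into the existence of means on $\ell^\infty(S)$ satisfying essentially identical identities, and then bridge them by a linearity argument. The crucial simplification from discreteness is that $RUC(S)=LUC(S)=\ell^\infty(S)=\ell^1(S)^*$, so all the relevant functionals live in the common dual space $\ell^\infty(S)^*$. First I would apply Theorem \ref{t1} to rewrite $\varphi$-amenability of $S$ as: there exist $m_1,m_2\in\ell^\infty(S)^*$ with $m_i(\varphi)=1$ and
\[
m_1(l_sf)=\varphi(s)m_1(f),\qquad m_2(r_sf)=\varphi(s)m_2(f)
\]
for all $s\in S$ and $f\in\ell^\infty(S)$. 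Next I would invoke the Kaniuth-Lau-Pym characterization cited in the introduction: $\ell^1(S)$ is left (right) $\widehat{\varphi}$-amenable iff there exists $m\in\ell^\infty(S)^*$ with $m(\widehat{\varphi})=1$ and $m(f\cdot a)=\widehat{\varphi}(a)m(f)$ (resp.\ $m(a\cdot f)=\widehat{\varphi}(a)m(f)$) for every $a\in\ell^1(S)$ and $f\in\ell^\infty(S)$.

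A direct computation on point masses then anchors the two pictures: $f\cdot\delta_s=l_sf$ and $\delta_s\cdot f=r_sf$, while the functional $\widehat{\varphi}$ on $\ell^1(S)$ is represented in $\ell^\infty(S)$ by $\varphi$ itself, with $\widehat{\varphi}(\delta_s)=\varphi(s)$. The easy direction ``$\ell^1(S)$ $\widehat{\varphi}$-amenable $\Rightarrow$ $S$ $\varphi$-amenable'' is then immediate: specialize $a=\delta_s$ in the Banach-algebra identity to recover the semigroup identity verbatim, for both handednesses.

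For the converse, given a left $\varphi$-mean $m$ for $S$, I would extend the semigroup identity from $\delta_s$ to arbitrary $a\in\ell^1(S)$. For $a=\sum_s a(s)\delta_s$ the series $\sum_s a(s)l_sf$ converges absolutely in $\ell^\infty$-norm, since $\sum_s|a(s)|\,\|l_sf\|_\infty\leq\|a\|_1\|f\|_\infty$, and its sum agrees pointwise with $f\cdot a$, hence equals $f\cdot a$ in $\ell^\infty(S)$. Continuity and linearity of $m$ then yield
\[
m(f\cdot a)=\sum_s a(s)m(l_sf)=\sum_s a(s)\varphi(s)m(f)=\widehat{\varphi}(a)m(f).
\]
The right-handed case is symmetric, with $l_s$ replaced by $r_s$ and $f\cdot a$ replaced by $a\cdot f$.

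The only real obstacle I anticipate is bookkeeping: carefully aligning the two conventions---semigroup translations $l_s,r_s$ on $\ell^\infty(S)$ versus the $\ell^1(S)$-bimodule actions on its dual---so that left (resp.\ right) $\varphi$-amenability of $S$ is paired with the correct side of $\widehat{\varphi}$-amenability of $\ell^1(S)$. Once the identifications $f\cdot\delta_s=l_sf$ and $\delta_s\cdot f=r_sf$ are confirmed, the argument collapses to the density/continuity extension above, and combining the left and right halves yields the stated equivalence.
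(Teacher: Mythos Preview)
Your argument is correct, but it follows a genuinely different route from the paper's. The paper never passes to means: it works directly at the level of derivations. Given a bounded derivation $D:\ell^1(S)\to X^*$ on a module $X\in{}_{\widehat\varphi}\mathcal M^{\ell^1(S)}$, the paper pulls $X$ back to an $S$-bimodule via $s\cdot x=\delta_s\cdot x$ and $x\cdot s=x\cdot\delta_s$, restricts to $d(s)=D(\delta_s)$, and uses $\varphi$-amenability of $S$ to make $d$ (hence $D$) inner; conversely a derivation $d:S\to X^*$ is extended to $\ell^1(S)$ by the standard linear extension (citing \cite{da}). Your approach instead invokes the two mean characterizations---Theorem~\ref{t1} for $S$ and the Kaniuth--Lau--Pym criterion for $\ell^1(S)$---and then matches the resulting conditions on $\ell^\infty(S)^*$ by the single continuity step $a=\sum_s a(s)\delta_s$. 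What your route buys is economy: once the identifications $f\cdot\delta_s=l_sf$, $\delta_s\cdot f=r_sf$ are pinned down, the whole theorem collapses to one density argument, and you avoid re-running any derivation machinery. The cost is that you import the external result from \cite{ka}, whereas the paper's derivation-transfer stays self-contained within its own definitions. Your explicit attention to the left/right bookkeeping is well placed; the paper's own proof is somewhat loose on exactly which side of $S$ matches which side of $\ell^1(S)$.
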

\begin{proof}
Let  $\varphi\in\Delta_S(S)$ and  $S$ be right $\varphi$-amenable. Then $\widehat{\varphi}:\ell^1(S)\longrightarrow\mathbb{C}$ defined by $\widehat{\varphi}(f)=\sum_{s\in S}\varphi(s)f(s)$ is a character on $\ell^1(S)$. Let $X\in~_{\widehat{\varphi}}\mathcal{M}^{\ell^1(S)}$, and let $D:\ell^1(S)\longrightarrow X^*$ be a bounded derivation. By the following actions we can see $X\in~_\varphi\mathcal{M}^S$:
\begin{equation*}
    s\cdot x=\delta_s\cdot x=\widehat{\varphi}(\delta_s) x=\sum_{t\in S}\varphi(t)\delta_s(t) x=\varphi(s) x,~\emph{\emph{and }}~ x\cdot s=x\cdot\delta_s,
\end{equation*}
for all $s\in S$ and $x\in X$. Consider the mapping $d:S\longrightarrow X^*$ by $d(s)=D(\delta_s)$. Clearly $d$ is a bounded derivation, and since $S$ is right $\varphi$-amenable,  there exists $\mathfrak{x}\in X^*$ such that $d(s)=s\cdot\mathfrak{x}-\varphi(s)\mathfrak{x}$. This implies that $D(f)=f\cdot\mathfrak{x}-\widehat{\varphi}(f)\mathfrak{x}$, for all $f\in\ell^1(S)$. Hence $\ell^1(S)$ is right $\widehat{\varphi}$-amenable. We obtain $\ell^1(S)$ is left $\widehat{\varphi}$-amenable in a similar way..

Conversely, let $\ell^1(S)$ be right $\widehat{\varphi}$-amenable. Suppose that $X\in~_\varphi\mathcal{M}^S$, and $d:S\longrightarrow X^*$ is a bounded derivation. We can consider $X$ as a Banach $\ell^1(S)$-bimodule via
\begin{equation*}
    f\cdot x=\sum_{s\in S}f(s)(s\cdot x)=\sum_{s\in S}\varphi(s)f(s)x=\widehat{\varphi}(f)x,~\emph{\emph{and}}~x\cdot f=\sum_{s\in S}f(s)(x\cdot s),
\end{equation*}
for all $f\in \ell^1(S)$ and $x\in X$. The derivation $d$ can be extended to a bounded derivation $D:\ell^1(S)\longrightarrow X^*$ with $d(s)=D(\delta_S)$ (for more details see \cite{da}, pp. 737). Since $\ell^1(S)$ is right $\widehat{\varphi}$-amenable,  there exists $\mathfrak{x}\in X^*$ such that $D(f)=f\cdot \mathfrak{x}-\widehat{\varphi}(f)\mathfrak{x}$, for all $f\in\ell^1(S)$. Then $d(s)=s\cdot \mathfrak{x}-\varphi(s)\mathfrak{x}$, for all $s\in S$. The proof for the left case is similar.
\end{proof}

In the above Theorem, if $\varphi\in\Phi_S$ and we replace semi-character amenability instead of character amenability,  we can prove  the following result.
\begin{cor}
Let $S$ be a discrete semigroup. Then $S$ is semi-character amenable if and only if $\ell^1(S)$ is character amenable.
\end{cor}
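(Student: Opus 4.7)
The plan is to bootstrap the preceding theorem from characters to semi-characters and then aggregate over the full character space of $\ell^1(S)$. The first observation is that the proof of the preceding theorem nowhere uses the hypothesis $|\varphi(s)|=1$; the only property of $\varphi$ it actually invokes is multiplicativity $\varphi(st)=\varphi(s)\varphi(t)$, entering via the identities $\widehat{\varphi}(\delta_s)=\varphi(s)$ and $s\cdot x=\widehat{\varphi}(\delta_s)x=\varphi(s)x$, and via the expansion $f\cdot x=\sum_{s\in S}f(s)\varphi(s)x=\widehat{\varphi}(f)x$. Each of these identities remains valid for any $\varphi\in\Phi_S$, and the associated $\widehat{\varphi}$ is still a character on $\ell^1(S)$ by the discussion in the introduction. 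Hence the same argument, read word-for-word with $\Phi_S$ in place of $\Delta_S(S)$, yields that for every $\varphi\in\Phi_S$, $S$ is $\varphi$-amenable if and only if $\ell^1(S)$ is $\widehat{\varphi}$-amenable.

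The second step is to invoke the bijective correspondence $\varphi\longleftrightarrow\widehat{\varphi}$ between $\Phi_S$ and the character space $\sigma(\ell^1(S))$ of $\ell^1(S)$, noted in the introduction: every character of $\ell^1(S)$ arises from a unique semi-character via $\widehat{\varphi}(f)=\sum_{s\in S}\varphi(s)f(s)$. Quantifying the per-semi-character equivalence just obtained over all $\varphi\in\Phi_S$ (equivalently, over all $\widehat{\varphi}\in\sigma(\ell^1(S))$) translates "$S$ is $\varphi$-amenable for every $\varphi\in\Phi_S$", i.e.\ semi-character amenability of $S$, into "$\ell^1(S)$ is $\widehat{\varphi}$-amenable for every $\widehat{\varphi}\in\sigma(\ell^1(S))$", i.e.\ character amenability of $\ell^1(S)$, and conversely.

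I do not expect a substantive obstacle: the machinery of the previous theorem is robust enough to cover $\Phi_S$ in place of $\Delta_S(S)$, and the bijection between semi-characters and characters of $\ell^1(S)$ does the rest. The only bookkeeping subtlety is that the general definition of character amenability of a Banach algebra (as given by Monfared) quantifies over $\sigma(\ell^1(S))\cup\{0\}$, including the zero functional; this degenerate case is either absorbed by convention on the semigroup side or verified directly, since under trivial module actions a derivation $D$ satisfies $D(fg)=0$ and is routinely inner on the discrete semigroup algebras under consideration.
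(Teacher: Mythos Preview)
Your proposal is correct and follows precisely the route the paper takes: the paper gives no separate proof for the corollary but simply remarks that the preceding theorem goes through verbatim with $\varphi\in\Phi_S$ in place of $\varphi\in\Delta_S(S)$, which is exactly your first step, and the aggregation over all semi-characters via the bijection $\Phi_S\leftrightarrow\sigma(\ell^1(S))$ is the implicit second step. Your observation about the $\varphi=0$ case in Monfared's definition is a genuine bookkeeping point that the paper does not address; it is not needed to match the paper's argument, but it is a fair caveat to raise.
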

By the following Theorem, we characterize Johnson's Theorem as follows:
\begin{thm}\label{t3}
Let $G$ be a locally compact topological group. Then the following statements are equivalent
\begin{itemize}
  \item[(i)] $G$ is amenable;
  \item[(ii)] $G$ is character amenable;
  \item[(iii)] $L^1(G)$ is amenable;
  \item[(iv)] $L^1(G)$ is character amenable.
\end{itemize}
\end{thm}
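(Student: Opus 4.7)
The strategy is to close a short cycle of implications by combining Johnson's theorem, Sangani Monfared's characterization of character amenability of $L^1(G)$, and the material already developed in Section 2. I would first dispose of the classical pieces: $(\mathrm{i}) \Leftrightarrow (\mathrm{iii})$ is Johnson's theorem \cite{jo1}, already recorded in the introduction, and $(\mathrm{iii}) \Rightarrow (\mathrm{iv})$ is immediate because character amenability is formally weaker than amenability (the inner-derivation condition is required only on the restricted class of bimodules with character-given action). For $(\mathrm{iv}) \Rightarrow (\mathrm{i})$ I would invoke the theorem of Sangani Monfared \cite{mo1}, which asserts that $L^1(G)$ is character amenable if and only if $G$ is amenable. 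These three inputs together give the equivalence of (i), (iii) and (iv).

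The implication $(\mathrm{ii}) \Rightarrow (\mathrm{i})$ is then immediate from Corollary \ref{c1} applied to the topological semigroup $G$.

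The main new step is $(\mathrm{i}) \Rightarrow (\mathrm{ii})$: assuming $G$ amenable, I must show $G$ is $\varphi$-amenable for every $\varphi \in \Delta_S(G)$. Fix such a $\varphi$ and a left invariant mean $\mu$ on $RUC(G)$, and define $m \in RUC(G)^*$ by $m(f) = \mu(\bar\varphi f)$; this is well defined since $\varphi : G \to \mathbb{T}$ is continuous, so $\bar\varphi f \in RUC(G)$ whenever $f \in RUC(G)$. Then $m(\varphi) = \mu(|\varphi|^2) = \mu(1_G) = 1$, and the pointwise identity $\bar\varphi \cdot (f \cdot s) = \varphi(s)\, l_s(\bar\varphi f)$, which follows at once from $\overline{\varphi(st)} = \overline{\varphi(s)}\,\overline{\varphi(t)}$ together with $(f \cdot s)(t)=f(st)=(l_s f)(t)$, combined with the left invariance of $\mu$ yields
\begin{equation*}
    m(f \cdot s) = \mu(\bar\varphi(f \cdot s)) = \varphi(s)\, \mu(l_s(\bar\varphi f)) = \varphi(s)\, m(f)
\end{equation*}
for all $s \in G$ and $f \in RUC(G)$. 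Theorem \ref{t1} then delivers left $\varphi$-amenability of $G$. The right case is symmetric, starting from a right invariant mean on $LUC(G)$; together these give $\varphi$-amenability, and since $\varphi$ was arbitrary, $G$ is character amenable.

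The main obstacle is the need to cite Monfared's theorem to cross from character amenability of $L^1(G)$ back to amenability of $G$; the discrete-semigroup theorem already proved above does not cover the continuous setting, and a self-contained argument in the locally compact case would require substantial additional machinery. All remaining steps reduce to Johnson's theorem, Corollary \ref{c1}, and the twisted-mean computation outlined above.
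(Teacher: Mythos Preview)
Your proof is correct, and for most implications it aligns with the paper's own argument: the paper also invokes Johnson's theorem for $(\mathrm{i})\Leftrightarrow(\mathrm{iii})$, Monfared's result \cite{mo1} for the link between (iii) and (iv), and Corollary~\ref{c1} for $(\mathrm{ii})\Rightarrow(\mathrm{i})$.

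The genuine difference is in $(\mathrm{i})\Rightarrow(\mathrm{ii})$. The paper simply cites \cite[Theorem~3.7]{pm} (Maysami Sadr--Pourabbas on Johnson amenability of topological semigroups), whereas you supply a direct twisted-mean construction: from a left invariant mean $\mu$ on $RUC(G)$ you build $m(f)=\mu(\bar\varphi f)$ and verify the conditions of Theorem~\ref{t1} via the pointwise identity $\bar\varphi\,(f\cdot s)=\varphi(s)\,l_s(\bar\varphi f)$, which exploits $|\varphi(s)|=1$. Your route is more elementary and fully self-contained within the paper's framework; the paper's route is shorter on the page but imports an external theorem. Both are valid, and your argument has the advantage of making transparent exactly where the hypothesis $\varphi\in\Delta_S(G)$ (as opposed to $\Phi_S$) is used.
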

\begin{proof}
(i)$\Longrightarrow$(ii) follows from  \cite[Theorem 3.7]{pm}. Corollary \ref{c1}, implies (ii)$\Longrightarrow$(i). (i)$\Longleftrightarrow$(iii) is Johnson's Theorem, and by   \cite[Corollary 2.4]{mo1}, we have (iii)$\Longleftrightarrow$(iv).
\end{proof}

Let $S$ be a topological semigroup and $f\in\ell^1(S)$ is said to be a finite mean, if $f(s)\geq0$, for every $s\in S$, $\{s:f(s)>0\}$ is finite and $\|f\|=\sum_{s\in S}f(s)=1$. Day proved that a semigroup $S$ is left amenable if and only if there is a net $(f_\gamma)_\gamma$ of finite means such that $\|s\cdot f_\gamma-f_\gamma\|_1\longrightarrow0$ \cite{day, nam}. By a similar argument, we have the following result for $\varphi$-amenability of $S$, where $\varphi\in\Delta_S(S)$.
\begin{thm}\label{fs1}
Let $S$ be a topological semigroup and $\varphi\in\Delta_S(S)$. Then $S$ is left $\varphi$-amenable if and only if there is a bounded net $(f_\alpha)_{\alpha\in I}\subseteq\ell^1(S)$ such that $\|s\cdot f_\alpha-\varphi(s)f_\alpha\|_1\longrightarrow0$ and its $w^*$-limit on $\varphi$ is 1.
\end{thm}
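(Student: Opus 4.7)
The plan is to mirror the classical Day--Namioka proof, with Theorem~\ref{t1} providing the bridge between $\varphi$-amenability and the existence of a $\varphi$-invariant functional on $RUC(S)$. \emph{For the easy direction} $(\Leftarrow)$, I would embed $\ell^1(S) \hookrightarrow RUC(S)^*$ via $\langle f, g\rangle = \sum_{s} f(s) g(s)$, and apply Banach--Alaoglu to the bounded net to extract a $w^*$-cluster point $m \in RUC(S)^*$; after passing to a subnet, $f_\alpha \to m$ in $w^*$. By hypothesis, $m(\varphi) = \lim_\alpha \langle f_\alpha, \varphi\rangle = 1$, and for every $g \in RUC(S)$ and $s \in S$ (noting $g \cdot s \in RUC(S)$),
\[|m(g \cdot s) - \varphi(s)\, m(g)| = \lim_\alpha |\langle s \cdot f_\alpha - \varphi(s) f_\alpha,\, g\rangle| \le \|g\|_\infty \lim_\alpha \|s \cdot f_\alpha - \varphi(s) f_\alpha\|_1 = 0,\]
so $m$ satisfies condition (ii) of Theorem~\ref{t1}, whence $S$ is left $\varphi$-amenable.

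\emph{For the hard direction} $(\Rightarrow)$, Theorem~\ref{t1} supplies $m \in RUC(S)^*$ with $m(\varphi) = 1$ and $m(g \cdot s) = \varphi(s) m(g)$ for all $s \in S$, $g \in RUC(S)$. The goal is: for each finite $F \subseteq S$ and $\epsilon > 0$, produce $f_{F,\epsilon} \in \ell^1(S)$, uniformly bounded in $(F,\epsilon)$, with $\|s \cdot f_{F,\epsilon} - \varphi(s) f_{F,\epsilon}\|_1 < \epsilon$ for every $s \in F$ and $|\langle f_{F,\epsilon}, \varphi\rangle - 1| < \epsilon$; indexing over pairs $(F, \epsilon)$ (directed by inclusion on $F$ and reverse order on $\epsilon$) then yields the required net. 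I would produce $f_{F,\epsilon}$ via a Mazur convexity argument applied to
\[V_F = \Bigl\{\bigl((s \cdot f - \varphi(s) f)_{s \in F},\, \langle f, \varphi\rangle\bigr) : f \in \ell^1(S),\ \|f\|_1 \le \|m\| + 1\Bigr\} \subseteq \bigoplus_{s \in F} \ell^1(S) \oplus \mathbb{C}.\]
Mazur's theorem identifies the norm closure of $V_F$ with its weak closure; extending $m$ to $\widetilde m \in \ell^\infty(S)^*$ by Hahn--Banach and approximating $\widetilde m$ in $w^*$ by a bounded net in $\ell^1(S) \hookrightarrow \ell^\infty(S)^*$ via Goldstine, the invariance of $m$ on $RUC(S)$ would then place $(0, 1)$ in the weak closure of $V_F$ by a standard dual-pair computation.

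\emph{The principal obstacle} is that the invariance of $m$ holds only on $RUC(S)$, whereas Mazur's theorem couples the weak closure of $V_F$ with its full topological dual $\bigoplus_F \ell^\infty(S) \oplus \mathbb{C}$. The classical Day--Namioka resolution, as in~\cite{day, nam}, shows that any separating functional produced by Hahn--Banach can be reduced so that its active components lie in $RUC(S)$, where the invariance of $m$ applies; testing the resulting pointwise inequality at unit point masses $\delta_t$ and applying $m$ then yields the required contradiction. Once this subtlety is dispatched, the extraction of $f_{F,\epsilon}$ and the assembly of the net are routine.
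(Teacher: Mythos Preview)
Your proposal follows the same Day--Namioka strategy as the paper's proof: both invoke Theorem~\ref{t1} to obtain $m\in RUC(S)^*$, approximate it weak-$*$ by a bounded net in $\ell^1(S)$, and then upgrade the weak convergence of $s\cdot f_\alpha-\varphi(s)f_\alpha$ to norm convergence via a Mazur-type convexity argument in a product space ($\ell^1(S)^S$ in the paper, finite direct sums $\bigoplus_{F}\ell^1(S)\oplus\mathbb{C}$ in your version). The paper in fact glosses over the $RUC(S)$-versus-$\ell^\infty(S)$ obstacle you single out---it passes directly from \eqref{e1n}, which is tested only against $f\in RUC(S)$, to the assertion that $0$ lies in the full weak closure of $T$ on the finite means---so your explicit identification of this gap and appeal to the Day--Namioka reduction is, if anything, more careful than the original.
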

\begin{proof}
Assume that $S$ is left $\varphi$-amenable and  $m$ is a bounded linear functional that is obtained in the Theorem \ref{t1}. Thus, there is a net $(f_\alpha)_{\alpha\in I}\subseteq\ell^1(S)$ such that $w^*$-converges to $m$ and $\|f_\alpha\|_1\leq\|m\|_\infty$. Then
\begin{equation}\label{e1n}
    f(s\cdot f_\alpha-\varphi(s)f_\alpha)=f_\alpha(f\cdot s)-\varphi(s)f_\alpha(f)\longrightarrow m(f\cdot s)-\varphi(s)m(f)=0,
\end{equation}
for all $f\in RUC(S)$ and $s\in S$.  Consider the product space $\ell^1(S)^S$ that is a locally convex linear topological space with the product of the norm topologies. Now define a linear map $T:\ell^1(S)\longrightarrow \ell^1(S)^S$ by $T(g)=(s\cdot g-\varphi(s)g)_{s\in S}$. Thus, if $S$ is left $\varphi$-amenable, then $0$ is in the weak closure of $T$ on the set of finite means such as $f\in\ell^1(S)$. Since the set of finite means is convex in $\ell^1(S)$ and $\ell^1(S)^S$ is locally convex, $T$ on this set is convex. This implies that the weak closure of $T$ on the set of finite means equals the closure of it in the given topology on $\ell^1(S)^S$, that is, the product of the norm topologies. Thus, there is a net $(f_\alpha)_{\alpha\in I}\subseteq\ell^1(S)$ such that $\|s\cdot f_\alpha-\varphi(s)f_\alpha\|_1\longrightarrow0$ and $f_\alpha\stackrel{w^*}{\longrightarrow}m$.

Conversely, let there is a net $(f_\alpha)_{\alpha\in I}\subseteq\ell^1(S)$ such that $w^*$-converges to an element of $RUC(S)^*$ namely $m$ such that $m(\varphi)=1$ and $\|s\cdot f_\alpha-\varphi(s)f_\alpha\|_1\longrightarrow0$, for every $s\in S$. This means that $s\cdot f_\alpha-\varphi(s)f_\alpha\longrightarrow0$ in the weak topology. Thus, similar to \eqref{e1n}, we have $m(f\cdot s)=\varphi(s)m(f)$, for all $f\in RUC(S)$ and $s\in S$. This shows that $S$ is $\varphi$-amenable.
\end{proof}
%%%%%%%%%%%%%%%%%%%%%%%%%%%%%%%%%%%%%%%%
%%%%%%%%%%%%%%%%%%%%%%%%%%%%%%%%%%%%%%%%
%%%%%%%%%%%%%%%%%%%%%%%%%%%%%%%%%%%%%%%%
%%%%%%%%%%%%%%%%%%%%%%%%%%%%%%%%%%%%%%%%
\section{Some Hereditary Properties}
%%%%%%%%%%%%%%%%%%%%%%%%%%%%%%%%%%%%%%%%
%%%%%%%%%%%%%%%%%%%%%%%%%%%%%%%%%%%%%%%%
%%%%%%%%%%%%%%%%%%%%%%%%%%%%%%%%%%%%%%%%
%%%%%%%%%%%%%%%%%%%%%%%%%%%%%%%%%%%%%%%%

This section deals with the stability properties of $\varphi$-amenability of topological semigroups and groups. We prove these properties via Theorem \ref{t1}.
\begin{prop}\label{p1}
Let $S$, $T$ be semigroups, $\theta:S\longrightarrow T$ be a continuous and onto semigroups homomorphism. Let $\psi\in\Delta_T(T)$, and let $S$ be left \emph{(}right\emph{)} $(\psi\circ\theta)$-amenable. Then $T$ is left \emph{(}right\emph{)} $\psi$-amenable.
\end{prop}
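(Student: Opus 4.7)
The plan is to use Theorem \ref{t1} as a bridge: translate the $(\psi\circ\theta)$-amenability of $S$ into a functional $m\in RUC(S)^*$, push it forward along $\theta$ to obtain a functional $n\in RUC(T)^*$, and then invoke Theorem \ref{t1} again to conclude $\psi$-amenability of $T$.

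More concretely, assume $S$ is left $(\psi\circ\theta)$-amenable. By Theorem \ref{t1} there is $m\in RUC(S)^*$ with $m(\psi\circ\theta)=1$ and $m(f\cdot s)=(\psi\circ\theta)(s)\,m(f)$ for all $s\in S$ and $f\in RUC(S)$. Define $n\colon RUC(T)\longrightarrow\mathbb{C}$ by $n(g)=m(g\circ\theta)$. Before using this, I need to check that $g\circ\theta$ really lies in $RUC(S)$ for every $g\in RUC(T)$: if $s_\alpha\to s$ in $S$, then $\theta(s_\alpha)\to\theta(s)$ in $T$ by continuity, and since $\theta$ is onto,
\begin{equation*}
\|(g\circ\theta)\cdot s_\alpha-(g\circ\theta)\cdot s\|_\infty
=\sup_{s'\in S}|g(\theta(s_\alpha)\theta(s'))-g(\theta(s)\theta(s'))|
=\|g\cdot\theta(s_\alpha)-g\cdot\theta(s)\|_\infty\longrightarrow 0.
\end{equation*}
Since $\|g\circ\theta\|_\infty\le\|g\|_\infty$, $n$ is bounded with $\|n\|\le\|m\|$.

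Next verify the two conditions of Theorem \ref{t1} for $T$. First, $n(\psi)=m(\psi\circ\theta)=1$. Second, given $t\in T$ and $g\in RUC(T)$, surjectivity of $\theta$ gives $s\in S$ with $\theta(s)=t$, and the homomorphism identity $((g\cdot t)\circ\theta)(s')=g(\theta(s)\theta(s'))=((g\circ\theta)\cdot s)(s')$ yields
\begin{equation*}
n(g\cdot t)=m((g\cdot t)\circ\theta)=m((g\circ\theta)\cdot s)=(\psi\circ\theta)(s)\,m(g\circ\theta)=\psi(t)\,n(g).
\end{equation*}
Theorem \ref{t1} then delivers left $\psi$-amenability of $T$. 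The right case is handled identically, working with $LUC$ in place of $RUC$ and the left module action $(s\cdot f)(s')=f(s's)$.

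The only genuinely non-routine step is verifying that $g\circ\theta\in RUC(S)$; this is where both hypotheses on $\theta$ enter — continuity to transport convergent nets from $S$ to $T$, and surjectivity to equate the sup over $\{\theta(s'):s'\in S\}$ with the sup over all of $T$. Everything else is formal pull-back along $\theta$.
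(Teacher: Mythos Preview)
Your proof is correct but takes a different route from the paper. The paper argues directly from the definition: given a Banach $T$-bimodule $X\in{}_\psi\mathcal{M}^T$ and a bounded derivation $D:T\to X^*$, it turns $X$ into an $S$-bimodule via $\theta$ (so that $X\in{}_{\psi\circ\theta}\mathcal{M}^S$), observes that $D\circ\theta:S\to X^*$ is again a bounded derivation, and uses the $(\psi\circ\theta)$-amenability of $S$ to obtain $\mathfrak{x}\in X^*$ with $(D\circ\theta)(s)=s\cdot\mathfrak{x}-\psi(\theta(s))\mathfrak{x}$; surjectivity of $\theta$ then yields $D(t)=t\cdot\mathfrak{x}-\psi(t)\mathfrak{x}$ for every $t\in T$. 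Your argument instead passes twice through the characterization of Theorem~\ref{t1}, pushing the functional $m\in RUC(S)^*$ forward to $n\in RUC(T)^*$ by precomposition with $\theta$. The paper's approach is shorter and avoids the detour through Theorem~\ref{t1}; your approach has the virtue of making explicit exactly where each hypothesis on $\theta$ is used --- continuity to transport convergent nets and surjectivity to identify the two sup-norms --- whereas in the paper's proof continuity enters only implicitly, in verifying that $D\circ\theta$ is weak$^*$-continuous.
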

\begin{proof}
Let $X\in~_\psi\mathcal{M}^T$ and $D:T\longrightarrow X^*$ be a bounded derivation. Then we can see $X$ as an element of $_{\psi\circ\theta}\mathcal{M}^S$ by the following actions
\begin{equation*}
    s\cdot x=\theta(s)\cdot x=\psi(\theta(s))x, ~\emph{\emph{and}}~x\cdot s=x\cdot\theta(s),
\end{equation*}
for all $s\in S$ and $x\in X$. Define $D\circ\theta:S\longrightarrow X^*$. Obviously, $D\circ\theta$ is a bounded derivation. Thus, there exists $\mathfrak{x}\in X^*$ such that $$(D\circ\theta)(s)=s\cdot\mathfrak{x}-\psi(\theta(s))\mathfrak{x},$$ for all $s\in S$. Since $\theta$ is onto, we have
$$D(t)=t\cdot\mathfrak{x}-\psi(t)\mathfrak{x},$$
 for all $t\in T$. Similarly, we can prove if $S$ is left $\psi\circ\theta$-amenable, then $T$ is left $\psi$-amenable.
\end{proof}
\begin{cor}
Let $S$ be a topological semigroup,  $L$ be a closed ideal in $S$ and $\varphi\in\Delta_S(S)$ such that $\varphi|_L\neq0$. If $S$ is $\varphi$-amenable, then $L$ is $\varphi|_L$-amenable.
\end{cor}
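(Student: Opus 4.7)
The plan is to apply the characterization of left $\varphi$-amenability given by Theorem \ref{t1}: from the functional $m \in RUC(S)^*$ witnessing left $\varphi$-amenability of $S$, I will construct a functional $m_L \in RUC(L)^*$ satisfying $m_L(\varphi|_L)=1$ and $m_L(f\cdot t)=\varphi|_L(t)m_L(f)$ for all $t \in L$ and $f \in RUC(L)$. The right-amenability of $L$ will then follow by a completely symmetric argument on the $LUC$-side.

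Since $\varphi|_L \neq 0$, fix $s_0 \in L$ with $\varphi(s_0)\neq 0$ (in fact any $s_0 \in L$ works, as characters are $\mathbb{T}$-valued). For each $f \in RUC(L)$, define $\widetilde{f}\colon S\to\mathbb{C}$ by $\widetilde{f}(s)=f(s_0 s)$; this is well-defined since $L$ is an ideal, so $s_0 s\in L$ for every $s \in S$. Using the joint continuity of multiplication in $S$ together with $f \in RUC(L)$, one checks that $\widetilde{f}\in RUC(S)$. Now put $m_L(f):=\varphi(s_0)^{-1}m(\widetilde{f})$, which is a bounded linear functional on $RUC(L)$ with $\|m_L\|\le \|m\|/|\varphi(s_0)|$.

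To verify the conditions of Theorem \ref{t1}(ii) for $m_L$: first, $\widetilde{\varphi|_L}(s)=\varphi(s_0 s)=\varphi(s_0)\varphi(s)$, so $m(\widetilde{\varphi|_L})=\varphi(s_0)m(\varphi)=\varphi(s_0)$, whence $m_L(\varphi|_L)=1$. Second, for $t \in L$ and $f \in RUC(L)$, the computation
\[
\widetilde{f\cdot t}(s)=(f\cdot t)(s_0 s)=f(s_0 s\, t)=\widetilde{f}(st)=(\widetilde{f}\cdot t)(s)
\]
shows $\widetilde{f\cdot t}=\widetilde{f}\cdot t$ inside $RUC(S)$. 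Applying $m$ and invoking its invariance property yields
\[
m_L(f\cdot t)=\varphi(s_0)^{-1}m(\widetilde{f}\cdot t)=\varphi(s_0)^{-1}\varphi(t)\,m(\widetilde{f})=\varphi|_L(t)\,m_L(f).
\]
Theorem \ref{t1} applied to the topological semigroup $L$ with character $\varphi|_L$ then delivers left $\varphi|_L$-amenability. Replacing $\widetilde{f}(s)=f(s_0 s)$ by $\widetilde{f}(s)=f(s s_0)$ and using the $LUC$-side functional supplied by right $\varphi$-amenability of $S$ produces the right $\varphi|_L$-amenability of $L$ in the same fashion.

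The main technical point I anticipate is the verification that $\widetilde{f}\in RUC(S)$. The idea is that a right translate of $\widetilde{f}$ in $S$ satisfies $(\widetilde{f}\cdot s)(t)=f(s_0 t s)$, which is the composition of $f$ with the jointly continuous operation $(t,s)\mapsto s_0 t s \in L$, so the right-uniform continuity of $f$ on $L$ should transfer to $\widetilde{f}$ on $S$. Once this is in place, the remainder of the argument is clean bookkeeping.
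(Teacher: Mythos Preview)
Your approach differs from the paper's: the paper records this as an unproved corollary of Proposition~\ref{p1}, which would require a continuous surjective homomorphism $\theta:S\to L$ satisfying $\varphi|_L\circ\theta=\varphi$, whereas you argue directly via the mean characterization of Theorem~\ref{t1}. Since for a general closed ideal there is no obvious such $\theta$, your route is in principle the more transparent one.

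There are, however, two concrete problems in your argument. First, a convention mismatch: in the proof of Theorem~\ref{t1} the right module action on $RUC(S)$ is $(f\cdot s)(t)=f(st)$, not $(f\cdot t)(x)=f(xt)$ as you use. Under the paper's convention your key identity fails, since $\widetilde{f\cdot t}(s)=(f\cdot t)(s_0 s)=f(t\,s_0 s)$ while $(\widetilde f\cdot t)(s)=\widetilde f(ts)=f(s_0\, t s)$. This is easily repaired by defining $\widetilde f(s)=f(s\,s_0)$ instead of $f(s_0 s)$.

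The second problem is more substantial and is precisely the point you flag as ``the main technical point''. With either placement of $s_0$, the verification that $\widetilde f\in RUC(S)$ does not follow from $f\in RUC(L)$ in the way you suggest. For instance, with $\widetilde f(s)=f(s\,s_0)$ and the paper's convention one must show $\sup_{t\in S}|f(u_\alpha t s_0)-f(u t s_0)|\to 0$ whenever $u_\alpha\to u$ in $S$; but $f\in RUC(L)$ only controls $\sup_{x\in L}|f(v_\alpha x)-f(vx)|$ for nets $v_\alpha\to v$ \emph{inside $L$}, and here the translating elements $u_\alpha,u$ need not lie in $L$. Passing to a two-sided sandwich $\widetilde f(s)=f(s_0\, s\, s_0)$ does force $\widetilde f\in RUC(S)$ (since then $s_0 u_\alpha\to s_0 u$ in $L$ and $t s_0\in L$), but then the intertwining $\widetilde{f\cdot t}=\widetilde f\cdot t$ breaks down, because $(f\cdot t)(s_0 s s_0)=f(t s_0 s s_0)$ while $\widetilde f(ts)=f(s_0 t s s_0)$. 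An additional idea is needed to reconcile the $RUC$ membership with the invariance identity; as written, the proof has a real gap at exactly the step you anticipated.
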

\begin{cor}
Let $G$ be a  locally compact group,  $H$ be a closed normal subgroup of $G$ and $\varphi\in\Delta_{G/H}(G/H)$. If $G$ is $(\varphi\circ\theta)$-amenable, where $\theta:G\longrightarrow G/H$ is the canonical homomorphism, then $G/H$ is $\varphi$-amenable.
\end{cor}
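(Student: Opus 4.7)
The plan is to observe that this corollary is essentially an immediate specialization of Proposition \ref{p1} to the group setting, so the only real work is to verify that the hypotheses of that proposition are satisfied by the canonical quotient map.

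First, I would take $S = G$, $T = G/H$, $\theta: G \to G/H$ the canonical homomorphism, and $\psi = \varphi \in \Delta_{G/H}(G/H)$. Because $H$ is a closed normal subgroup of the locally compact group $G$, the quotient $G/H$ inherits a locally compact group structure, and the quotient map $\theta$ is a continuous, surjective group homomorphism; in particular it is a continuous, onto semigroup homomorphism, which is exactly what Proposition \ref{p1} requires.

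Next, $\varphi \circ \theta$ is a character on $G$ by composition, and the hypothesis of the corollary is precisely that $G$ is $(\varphi \circ \theta)$-amenable. Applying Proposition \ref{p1} with these choices then yields that $T = G/H$ is $\varphi$-amenable, which is the desired conclusion. The left/right versions follow from the same application, depending on which side of Proposition \ref{p1} one invokes.

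There is no real obstacle here: the statement is purely a transport of $\varphi$-amenability along a quotient homomorphism, and all of the work has already been done in Proposition \ref{p1}. The only subtlety worth noting in the write-up is that $G/H$ must be a well-defined topological (in fact locally compact) group, which is where normality and closedness of $H$ are used; once this is granted, the corollary is a direct corollary in the strict sense.
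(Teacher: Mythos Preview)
Your proposal is correct and matches the paper's approach exactly: the corollary is stated without proof in the paper, and it is intended as an immediate specialization of Proposition~\ref{p1} with $S=G$, $T=G/H$, $\theta$ the canonical quotient map, and $\psi=\varphi$. Your verification that $\theta$ is a continuous surjective homomorphism (using that $H$ is closed and normal) is precisely the routine check the paper leaves implicit.
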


It is well-known that the quotient group of an amenable group $G$ by a closed normal subgroup $H$ is amenable and moreover $H$ is amenable as a group. By these facts, Theorem \ref{t3} and Proposition \ref{p1}, we have the following result:
 \begin{cor}
Let $G$ be a  locally compact group and $H$ be a closed normal subgroup of $G$. Then $G$ is character amenable if and only if $H$ and $G/H$ are character amenable.
\end{cor}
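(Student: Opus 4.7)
The plan is to reduce the statement, via Theorem \ref{t3}, to the classical extension theorem for amenability of locally compact groups. Since $H$ is closed and normal, $G/H$ carries a locally compact group structure, so Theorem \ref{t3} applies to each of $G$, $H$, and $G/H$. This lets us replace ``character amenable'' by ``amenable'' uniformly across the three groups, turning the claim into the standard statement: $G$ is amenable iff both $H$ and $G/H$ are amenable. That statement is exactly the subgroup/quotient/extension package explicitly quoted in the paragraph preceding the corollary.

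For the forward direction, I would start by applying Theorem \ref{t3} to $G$ to obtain that $G$ is amenable as a locally compact group. Then I invoke the two classical inheritance results: closed subgroups of amenable groups are amenable (giving amenability of $H$), and quotients by closed normal subgroups are amenable (giving amenability of $G/H$). Applying Theorem \ref{t3} now to $H$ and to $G/H$ promotes each to character amenability. Alternatively, character amenability of $G/H$ could be extracted directly from Proposition \ref{p1} applied to the canonical surjection $G\longrightarrow G/H$ using an appropriate $\psi\in\Delta_{G/H}(G/H)$, but that route is redundant since Theorem \ref{t3} already handles it.

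For the converse, suppose $H$ and $G/H$ are character amenable. Theorem \ref{t3} applied to each gives that $H$ and $G/H$ are amenable locally compact groups. The classical extension theorem (an extension of an amenable group by a closed normal amenable subgroup is amenable) then yields amenability of $G$, and one final invocation of Theorem \ref{t3} produces character amenability of $G$.

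There is essentially no serious obstacle here; the real content was pushed into Theorem \ref{t3} and into the classical amenability extension theorem. The only mild point of care is compatibility of the two notions of ``amenable'': Theorem \ref{t3} uses the semigroup-style formulation via invariant means on $RUC(S)$ and $LUC(S)$, while the classical extension theorem is stated for locally compact groups in terms of invariant means on $L^\infty(G)$; for locally compact groups these notions coincide, so no additional argument is required. If one wanted a proof bypassing the classical extension theorem, the work would have to be done by constructing a $\varphi$-almost invariant net in $L^1(G)$ from such nets on $H$ and $G/H$ in the spirit of Theorem \ref{fs1}, but the corollary as stated makes that unnecessary.
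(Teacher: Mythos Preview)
Your proposal is correct and follows essentially the same route as the paper. The paper's proof is just the sentence preceding the corollary: it invokes the well-known hereditary properties of amenability for closed subgroups and quotients, together with Theorem~\ref{t3} and Proposition~\ref{p1}. The only cosmetic difference is that the paper lists Proposition~\ref{p1} among the ingredients (for the passage to $G/H$), whereas you correctly observe this is redundant once Theorem~\ref{t3} is in hand; and you are more explicit than the paper about the converse direction, where the classical extension theorem (amenability of $G$ from that of $H$ and $G/H$) is needed but is only implicitly covered by the paper's ``well-known'' remark.
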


Let $S$ and $T$ be semigroups. Then $S\times T$ is a semigroup with the operation $$(s_1,t_1)(s_2,t_2)=(s_1s_2,t_1t_2),$$ for all $s_1,s_2\in S$ and $t_1,t_2\in T$.  Define $\pi_S:S\times T\longrightarrow S$ and  $\pi_T:S\times T\longrightarrow T$ by $\pi_S(s,t)=s$ and $\pi_T(s,t)=t$, respectively, for all $s\in S$ and $t\in T$. Clearly, both $\pi_S$ and $\pi_T$ are continuous and onto semigroups homomorphisms.

\begin{thm}\label{t14}
Let $S$ and $T$ be two topological  semigroups. If $S\times T$ is left \emph{(}right\emph{)} character amenable, then $S$  and $T$ are left \emph{(}right\emph{)} character amenable.
\end{thm}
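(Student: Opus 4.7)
The plan is to deduce the theorem directly from Proposition \ref{p1} applied to the canonical coordinate projections $\pi_S$ and $\pi_T$, which are already observed in the text to be continuous onto semigroup homomorphisms. The key observation is that every character on a factor pulls back along the corresponding projection to yield a character on the product $S \times T$, and left (right) character amenability of $S \times T$ supplies left (right) $\widetilde{\varphi}$-amenability for each such pulled-back character $\widetilde{\varphi}$.

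Concretely, to show that $S$ is left character amenable, I would fix an arbitrary $\varphi \in \Delta_S(S)$ and set $\widetilde{\varphi} := \varphi \circ \pi_S : S \times T \longrightarrow \mathbb{T}$. A direct check (using that $\pi_S$ is a semigroup homomorphism and that $\varphi$ is multiplicative into $\mathbb{T}$) shows that $\widetilde{\varphi}$ is a nonzero continuous semigroup homomorphism, so $\widetilde{\varphi} \in \Delta_{S \times T}(S \times T)$; nonvanishing follows from $\varphi \neq 0$ together with the surjectivity of $\pi_S$. By hypothesis, $S \times T$ is left $\widetilde{\varphi}$-amenable. Applying Proposition \ref{p1} with $\theta = \pi_S$ and $\psi = \varphi$ immediately yields that $S$ is left $\varphi$-amenable. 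Since $\varphi$ was arbitrary, $S$ is left character amenable; the statement for $T$ follows verbatim using $\pi_T$ in place of $\pi_S$. The right case is identical, invoking the right version of Proposition \ref{p1}.

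I do not anticipate a serious obstacle: the whole argument is a direct application of the already-established Proposition \ref{p1}, and the only verifications needed are that $\widetilde{\varphi}$ is indeed a character (routine from the product semigroup structure) and that the projections meet the hypotheses of Proposition \ref{p1} (built into the definition of $S \times T$). The main conceptual content is the recognition that pulling back characters along coordinate projections is the correct mechanism to transfer $\varphi$-amenability from the product to each factor.
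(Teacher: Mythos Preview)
Your proposal is correct and follows essentially the same approach as the paper's own proof: fix an arbitrary character $\varphi$ on a factor, pull it back along the coordinate projection to obtain a character on $S\times T$, and then apply Proposition~\ref{p1} to transfer $\varphi$-amenability down to the factor. The paper's argument is precisely this, stated in slightly fewer words.
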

\begin{proof}
Let $S\times T$ be left (right) character amenable. Let $\varphi\in\Delta_S(S)$ be an arbitrary and $\pi_S$ be as above. Then $\varphi\circ \pi_S\in \Delta_{S\times T}(S\times T)$ and Proposition \ref{p1} implies that $S$ is left (right) $\varphi$-amenable.  This shows that $S$ is character amenable, because  $\varphi$ was arbitrary. Similarly one can see that $T$ is character amenable.
\end{proof}

We consider the converse of the above Theorem in the special case as follows:
\begin{thm}
Let $S$, $T$ be two topological  semigroups,  $\varphi\in\Delta_S(s)$ and $\psi\in\Delta_T(T)$. If $S$ is left \emph{(}right\emph{)} $\varphi$-amenable and $T$ is left \emph{(}right\emph{)} $\psi$-amenable, then $S\times T$ is left \emph{(}right\emph{)} $(\varphi,\psi)$-amenable.
\end{thm}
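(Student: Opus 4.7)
The plan is to invoke the functional-analytic characterization provided by Theorem \ref{t1} and to construct a witness functional on $RUC(S \times T)$ by iterating the two given means. Define the candidate character $(\varphi,\psi)(s,t):=\varphi(s)\psi(t)$; this is manifestly a continuous character of $S \times T$. By hypothesis and Theorem \ref{t1}, there exist $m_S \in RUC(S)^*$ and $m_T \in RUC(T)^*$ with $m_S(\varphi)=m_T(\psi)=1$ and with the invariance properties $m_S(f\cdot s)=\varphi(s)m_S(f)$ and $m_T(h\cdot t)=\psi(t)m_T(h)$.

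For $F \in RUC(S\times T)$ and $t \in T$, set $F_t(s):=F(s,t)$. First I would check that $F_t \in RUC(S)$: if $s_\alpha\to s$ in $S$, then $(s_\alpha,t)\to(s,t)$ in $S\times T$, and the uniform continuity of $F$ on $S\times T$ projected onto the first coordinate yields $\|F_t \cdot s_\alpha - F_t \cdot s\|_\infty\to 0$. Next, set $g_F(t):=m_S(F_t)$ and check $g_F \in RUC(T)$ via the estimate
\[
\|g_F \cdot t_\alpha - g_F \cdot t\|_\infty \le \|m_S\|\sup_{t' \in T}\|F_{t_\alpha t'} - F_{t t'}\|_\infty,
\]
which tends to $0$ by the uniform continuity of $F$. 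Finally define $m \in RUC(S\times T)^*$ by $m(F):=m_T(g_F)$, which is plainly bounded and linear. This mirrors the iterated-mean construction already used in Example (ii) of the semidirect product.

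Two direct computations then complete the verification. For the normalization, $(\varphi,\psi)_t = \psi(t)\varphi$, so $g_{(\varphi,\psi)} = \psi$ and $m((\varphi,\psi)) = m_T(\psi)=1$. For the invariance, the key identity $(F\cdot(s,t))_y = F_{yt}\cdot s$ together with the $\varphi$-scaling of $m_S$ gives
\[
g_{F\cdot(s,t)}(y) = m_S(F_{yt}\cdot s) = \varphi(s)\,g_F(yt) = \varphi(s)(g_F\cdot t)(y),
\]
whence $m(F\cdot(s,t))=\varphi(s)\psi(t)m(F)=(\varphi,\psi)(s,t)m(F)$. Theorem \ref{t1} now delivers left $(\varphi,\psi)$-amenability of $S\times T$; the right case is handled by the entirely symmetric construction on $LUC$, interchanging the order of slicing and integration as dictated by the $\mathcal{M}_\varphi^S$-convention.

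The main obstacle is purely technical: verifying $F_t \in RUC(S)$ and $g_F \in RUC(T)$ with the precise right-uniform-continuity convention employed here, since the natural uniformity statement for $F \in RUC(S\times T)$ is phrased via joint translations on $S\times T$ and has to be projected onto the individual coordinates. Once this membership is secured, the algebraic manipulations establishing the scaling $m(F\cdot(s,t))=(\varphi,\psi)(s,t)m(F)$ are routine and the invocation of Theorem \ref{t1} immediate.
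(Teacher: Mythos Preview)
Your proposal is correct and follows essentially the same iterated-mean construction as the paper's proof: both invoke Theorem~\ref{t1}, slice $F\in RUC(S\times T)$ in one coordinate, apply one of the two given means, and then apply the other, verifying the $(\varphi,\psi)$-invariance by a direct computation. The only cosmetic differences are that the paper integrates with $m_T$ first and then $m_S$ (you do the reverse), and that your slice identity should read $(F\cdot(s,t))_y=F_{ty}\cdot s$ rather than $F_{yt}\cdot s$; with that correction the chain $g_{F\cdot(s,t)}(y)=\varphi(s)g_F(ty)=\varphi(s)(g_F\cdot t)(y)$ goes through exactly as you intended.
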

\begin{proof}
Suppose that $S$ is left $\varphi$-amenable, $T$ is left $\psi$-amenable,  $m_S$ and $m_T$ are the bounded functionals obtained from Theorem \ref{t1}. For each $f\in RUC(S\times T)$ and $(s,t)\in S\times T$, we can define $f_s\in RUC(T)$ and $g\in RUC(S)$ as follows
$$f_s(t)=f(s,t)\hspace{1cm}\emph{\emph{and}}\hspace{1cm}g(s)=m_T(f_s(t)).$$

Now, define $m$ on $RUC(S\times T)$ by $m(f)=m_S(g)$, for all $f\in RUC(S\times T)$. Then
\begin{eqnarray*}
% \nonumber to remove numbering (before each equation)
  m(f\cdot(s,t)) &=& m_S[m_T((f\cdot (s,t))(x,y))]=m_S[m_T(f(sx,ty))] \\
   &=& m_S[m_T(f_{sx}(ty))]=m_S[m_T((f_{sx}\cdot t)(y))]  \\
   &=&  m_S[\psi(t)m_T(f_{sx}(y))]=\psi(t) m_S[g(sx)]\\
   &=& \psi(t) m_S[(g\cdot s)(x)] =\varphi(s)\psi(t) m_S[g(x)] \\
   &=&\varphi(s)\psi(t) m_S[m_T(f(x,y))]\\
   &=&\varphi(s)\psi(t) m(f)
\end{eqnarray*}
for all $f\in RUC(S\times T)$ and $(s,t)\in S\times T$. Clearly, $(\varphi,\psi)\in RUC(S\times T)$ and the above obtained result follows that $m\left((\varphi,\psi)\right)=1$. Thus, Theorem \ref{t1} implies that $S\times T$ is left $(\varphi,\psi)$-amenable.
\end{proof}

An involution on a topological semigroup $S$ is a map $*$ from $S$ into $S$ such that,
 the images of $s, t\in S$ are denoted by $s^*$ and $t^*$, respectively,  $s = (s^*)^*$,  $(st)^* = t^* s^*$ and $*$ is a continuous map; see  \cite{bb, bi} for more results related to topological semigroups with involution and characters on them.  Let $f\in LUC(S)$ or $RUC(S)$, we set $\widetilde{f}(s)=f(s^*)$, for all $s\in S$.
\begin{thm}
Let $S$  be a discrete  semigroups with involution $*$ and $\varphi\in\Delta_S(S)$. If $S$ is left \emph{(}right\emph{)} $\widetilde{\varphi}$-amenable, then $S$ is right \emph{(}left\emph{)} $\varphi$-amenable.
\end{thm}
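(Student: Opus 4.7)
My plan is to apply Theorem \ref{t1} in both directions of the proof, using the involution to transport the witnessing functional. First I would check two preliminary facts: (a) $\widetilde{\varphi}$ is again a character in $\Delta_S(S)$, since $\widetilde{\varphi}(st) = \varphi((st)^*) = \varphi(t^*)\varphi(s^*) = \widetilde{\varphi}(s)\widetilde{\varphi}(t)$ by commutativity of multiplication in $\mathbb{T}$; and (b) because $S$ is discrete, $RUC(S) = LUC(S) = \ell^\infty(S)$, so the map $g \mapsto \widetilde{g}$ is an isometric linear involution of the underlying function space and sends $RUC(S)$ to $LUC(S)$ and back with no continuity issues.

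Assume now that $S$ is left $\widetilde{\varphi}$-amenable. By Theorem \ref{t1}, there is a bounded linear functional $m \in RUC(S)^*$ with $m(\widetilde{\varphi}) = 1$ and
\begin{equation*}
  m(f \cdot s) = \widetilde{\varphi}(s)\, m(f) = \varphi(s^*)\, m(f) \qquad (s \in S,\ f \in RUC(S)),
\end{equation*}
where $(f \cdot s)(t) = f(st)$. Define $\widetilde{m} \in LUC(S)^*$ by $\widetilde{m}(g) = m(\widetilde{g})$. Immediately $\widetilde{m}(\varphi) = m(\widetilde{\varphi}) = 1$. The main identity to establish is
\begin{equation*}
  \widetilde{s \cdot g} = \widetilde{g} \cdot s^* \qquad (s \in S,\ g \in LUC(S)),
\end{equation*}
where $(s \cdot g)(t) = g(ts)$ is the left action used in the right-$\varphi$-amenability clause of Theorem \ref{t1}. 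This is a one-line check: evaluating both sides at $t$ gives $g(t^* s)$. Granted this identity, $\widetilde{m}(s \cdot g) = m(\widetilde{g} \cdot s^*) = \varphi((s^*)^*)\, m(\widetilde{g}) = \varphi(s)\, \widetilde{m}(g)$, so $\widetilde{m}$ satisfies condition (ii) of Theorem \ref{t1} for right $\varphi$-amenability, and the conclusion follows. The dual direction (right $\widetilde{\varphi}$-amenable implies left $\varphi$-amenable) runs symmetrically by the same transport $m \mapsto \widetilde{m}$, using the companion identity $\widetilde{f \cdot s} = s^* \cdot \widetilde{f}$.

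The argument is entirely mechanical once the conventions are lined up; the only real hazard is bookkeeping, namely keeping the four module-action conventions (left and right, and the two classes ${}_\varphi\mathcal{M}^S$ and $\mathcal{M}_\varphi^S$) in alignment with the translation formulas used in the proof of Theorem \ref{t1}, and tracking the involution carefully so that the $s^*$ that appears after applying the defining property of $m$ flips back to $s$ via $(s^*)^* = s$.
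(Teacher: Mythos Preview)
Your proof is correct and follows essentially the same route as the paper: transport the witnessing functional via $m'(g)=m(\widetilde{g})$, establish the translation--involution identity $\widetilde{s\cdot g}=\widetilde{g}\cdot s^*$, and read off the right $\varphi$-amenability condition from Theorem~\ref{t1}. Your bookkeeping is in fact cleaner than the paper's, which writes the key identity as $s\cdot\widetilde{f}=\widetilde{f}\cdot s^*$ (a notational slip for $\widetilde{s\cdot f}=\widetilde{f}\cdot s^*$) while carrying out the same computation you describe.
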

\begin{proof}
Suppose that $S$ is left (right) $\widetilde{\varphi}$-amenable. Then Theorem \ref{t1} implies that there is a bounded linear functional $m$ in $\ell^\infty(S)^*$ such $m(\widetilde{\varphi})=1$ and $m(f\cdot s^*)=\widetilde{\varphi}(s^*)m(f)$, for all $f\in \ell^\infty(S)$. Let $f\in \ell^\infty(S)$ and define $m'(f)=m(\widetilde{f})$. Since the mapping $f\longmapsto\widetilde{f}$ is linear, $m'$ is linear and bounded. Furthermore, $m'(\varphi)=m(\widetilde{\varphi})=1$ and $m'(f)\geq0$, for all $f\in \ell^\infty(S)$.

Moreover, for all $f\in \ell^\infty(S)$ and $s,t\in S$, we have
\begin{eqnarray*}
% \nonumber to remove numbering (before each equation)
 (s\cdot\widetilde{f})(t)  &=& (s\cdot f)(t^*)=f(t^*s)=f\left((s^*t)^*\right) \\
   &=& \widetilde{f}(s^*t)=(\widetilde{f}\cdot s^*)(t).
\end{eqnarray*}

This shows that $s\cdot\widetilde{f}=\widetilde{f}\cdot s^*$ for all $f\in \ell^\infty(S)$  and $s\in S$. Then
\begin{eqnarray*}
% \nonumber to remove numbering (before each equation)
  m'(s\cdot f) &=& m(s\cdot \widetilde{f})=m(\widetilde{f}\cdot s^*)=\widetilde{\varphi}(s^*)m(\widetilde{f}) \\
   &=&  \varphi(s)m'(f),
\end{eqnarray*}
for all $f\in \ell^\infty(S)$  and $s\in S$. Thus, $S$ is right $\varphi$-amenable.
\end{proof}

Assume that $S$ is a semigroup and $\varphi\in\Delta_S(S)$. Let $T$ be a subsemigroup of $S$ and $S$ is left (right) $\varphi$-amenable. We denote $\varphi$ by $\varphi|_T$ on $T$, clearly, it is a character on $T$, but, maybe $T$ is not left (right) $\varphi_T$-amenable. In other words, there is a subsemigroup of $\varphi$-amenable semigroup is not $\varphi$-amenable. Moreover, there is a subsemigroup $T$ of semigroup $S$ and $\varphi\in\Delta_T(T)$ such that $T$ is $\varphi$-amenable and $\widetilde{\varphi}$-amenability of $S$ does not sense, where $\widetilde{\varphi}$ is the extension of $\varphi$ on $S$. The following example shows the above statements are true.
\begin{ex}
\begin{itemize}
  \item[(i)] Let $S$ be a  semigroup without zero element $o$. Following \cite{dls}, we denote the semigroup formed by adjoining $o$ to $S$ by $S^o$ and $S$ becomes a subsemigroup of $S^o$. Then the only  character on $S^o$ is $1_{S^o}\in\Delta_{S^o}(S^o)$. Let $S$   be not left $1_S$-amenable. Define $m(f)=f(o)$, for all $f\in RUC(S^o)$. Thus, $S^o$ is left $1_{S^o}$-amenable.
  \item[(ii)] Let $S$ be a semigroup without zero element $o$ and $\varphi\in\Delta_S(S)$ such that $1_S\neq \varphi$. If $S$ is $\varphi$-amenable, then according to $\mathrm{(i)}$ and by this fact that $\varphi$ has not any extension such as $\widetilde{\varphi}$ on $S^o$, $S^o$ is not $\widetilde{\varphi}$-amenable.
\end{itemize}
\end{ex}
\begin{defn}
Let $S$ be a topological semigroup, $T$ be a  right  thick susbemigroup of $S$ and $\varphi\in\Delta_S(S)$. We say that $T$ is strongly left $\varphi|_T$-amenable if there is a bounded linear functional $m$ on $LUC(T)$ such that $\mathrm{(i)}$ $m(\varphi|_T)=1$ and $\mathrm{(ii)}$  $m(s\cdot f)=\varphi(s)m(f)$, for all  $f\in LUC(T)$, $s\in S$. Similarly, one can define the strongly right $\varphi|_T$-amenability for the left thick susbemigroup $T$ of $S$.
\end{defn}

\begin{thm}\label{sub}
Let $S$ be a topological semigroup, $T$ be a left (right)  thick susbemigroup of $S$ and $\varphi\in\Delta_S(S)$. Then $T$ is strongly right (left) $\varphi_T$-amenable if and only if $S$ is right (left) $\varphi$-amenable.
\end{thm}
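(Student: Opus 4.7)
The plan is to reduce the biconditional to Theorem \ref{t1}: right $\varphi$-amenability of $S$ becomes the existence of a bounded linear functional $m \in LUC(S)^*$ with $m(\varphi)=1$ and $m(s \cdot f) = \varphi(s) m(f)$ for every $s \in S$ and $f \in LUC(S)$, while the strongly right $\varphi|_T$-amenability of $T$ is, by design, the existence of an analogous $\mu$ on the corresponding function space over $T$ with $\varphi$-twisted invariance under the \emph{full} ambient semigroup $S$. The proof then becomes a transfer of such functionals between $T$ and $S$, mediated by the thickness hypothesis. I will describe the right/left-thick case; the dual case is symmetric.

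For the easier direction, that strong $\varphi|_T$-amenability of $T$ forces right $\varphi$-amenability of $S$, the obvious recipe is to set $m(f) := \mu(f|_T)$ for $f \in LUC(S)$, where $\mu$ is the postulated functional on $LUC(T)$. Boundedness and linearity are clear, and the normalization $m(\varphi) = \mu(\varphi|_T) = 1$ is immediate. The invariance then reduces to the pointwise identity $(s \cdot f)|_T = s \cdot (f|_T)$, which gives $m(s \cdot f) = \mu(s \cdot (f|_T)) = \varphi(s)\mu(f|_T) = \varphi(s) m(f)$. Theorem \ref{t1} then delivers right $\varphi$-amenability of $S$.

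For the converse, let $m$ be a functional on $LUC(S)$ as provided by Theorem \ref{t1}. To construct $\mu \in LUC(T)^*$ with the strong-invariance property, use left-thickness of $T$: for each $g \in LUC(T)$ and each finite $F \subseteq S$, pick $t_F \in T$ with $F t_F \subseteq T$, so that the right-translate $g(\,\cdot\, t_F)$ makes sense on $F$; patch this with $g$ on $T$ and extend boundedly to obtain $\tilde g_F \in LUC(S)$. Set $\mu_F(g) := m(\tilde g_F)$ and pass to a $w^*$-cluster point $\mu$ along the directed set of finite subsets of $S$. The $\varphi$-twisted invariance of $m$, combined with uniform continuity of $g$ and the cofinality of the thickness-indexed net, should give $\mu(\varphi|_T) = 1$ and $\mu(s \cdot g) = \varphi(s) \mu(g)$ for every $s \in S$, so that Theorem \ref{t1} applied to the strong-amenability definition closes the equivalence.

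The main obstacle is the converse: showing that the $w^*$-cluster point $\mu$ is well-defined independent of the arbitrary choices $t_F$ and of the bounded extensions $\tilde g_F$, and that it inherits the strong $S$-invariance. Two choices of $t_F$ yield partial extensions differing by a right-translate, and it is precisely the identity $m(f \cdot s) = \varphi(s) m(f)$ (the right-version coming from the dual picture in Theorem \ref{t1}) that controls this discrepancy, while left-thickness ensures every $s \in S$ can be "realized" inside $T$ by composition with a suitable $t_F$. Pinning down this interplay between the global mean on $S$ and the combinatorial geometry of $T$ inside $S$ is the technical heart of the argument; once it is secured, both normalization and $S$-invariance of $\mu$ are routine.
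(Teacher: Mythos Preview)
Your forward direction (strongly right $\varphi|_T$-amenable $\Rightarrow$ right $\varphi$-amenable) matches the paper's: both restrict $f\in LUC(S)$ to $T$ and push the functional forward via $m(f):=\mu(f|_T)$, invoking left-thickness to justify $(s\cdot f)|_T = s\cdot(f|_T)$.

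For the converse the paper does something much more direct than your cluster-point construction. It simply defines an extension operator $\Psi:LUC(T)\to LUC(S)$ by extending $f\neq\varphi|_T$ by zero on $S\setminus T$ (and sending $\varphi|_T$ to $\varphi$), then sets $\mu:=\Psi^*(\mathbf m)$ where $\mathbf m$ is the functional on $LUC(S)$ furnished by Theorem~\ref{t1}. The required identity $\mathbf m(\Psi(s\cdot f))=\mathbf m(s\cdot\Psi(f))$ is checked pointwise on $T$ and then controlled on $S\setminus T$ via a characteristic-function bound. No nets, no limits, no choices of $t_F$ to reconcile.

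Your proposed route has a concrete gap. To show independence of the choice of $t_F$ you appeal to ``the identity $m(f\cdot s)=\varphi(s)m(f)$ (the right-version coming from the dual picture in Theorem~\ref{t1})''. But right $\varphi$-amenability, via Theorem~\ref{t1}, yields only the \emph{left}-translation invariance $m(s\cdot f)=\varphi(s)m(f)$ on $LUC(S)$; the right-translation identity $m(f\cdot s)=\varphi(s)m(f)$ is the hallmark of \emph{left} $\varphi$-amenability and lives on $RUC(S)$. Since two choices $t_F,t_F'$ produce extensions differing by a \emph{right} translate, the invariance you actually have does not control the discrepancy, and well-definedness of $\mu$ is not established. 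Either rework the construction so the ambiguity is a left translate, or abandon the limiting scheme in favor of the paper's single extension map $\Psi$.
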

\begin{proof}
We prove the right case and the left case is similar.
Assume that $T$ is  strongly right $\varphi|_T$-amenable. Define $\Phi:LUC(S)\longrightarrow LUC(T)$ by $\Phi(f)=f|_T$. Clearly, $\Phi$ is a bounded linear map and consider $\Phi^*:LUC(T)^*\longrightarrow LUC(S)^*$. By Theorem \ref{t1}, there is a bounded linear functional $m$ in $LUC(T)^*$ such that $m(\varphi|_T)=1$ and $m(t\cdot f)=\varphi|_T(t)m(f)$, for all $f\in LUC(T)$ and $t\in T$. We claim that $\mathbf{m}=\Phi^*(m)$ is a bounded linear functional for $S$ that satisfies  the condition (ii) of Theorem \ref{t1}. Since $T$ is left thick, for all  $f\in LUC(S)$, $s\in S$ and $t\in T$, we have
$$\Phi(s\cdot f)(t)=(s\cdot f)|_T(t)=f|_T(ts)=(s\cdot\Phi(f))(t).$$

This implies that $\Phi(s\cdot f)=s\cdot\Phi(f)$, for all  $f\in LUC(S)$ and $s\in S$. Then
\begin{eqnarray}\label{e1sub}
\nonumber
   \mathbf{m}(s\cdot f)&=&\Phi^*(m)(s\cdot f)=m(\Phi(s\cdot f))=m(s\cdot\Phi(f))=\varphi(s)m(\Phi(f))\\
   &=&\varphi(s)\mathbf{m}(f),
\end{eqnarray}
for all  $f\in LUC(S)$ and $s\in S$. Moreover,
\begin{equation}\label{e2sub}
    \mathbf{m}(\varphi)=\Phi^*(m)(\varphi)=m(\Phi(\varphi))=m(\varphi_T)=1.
\end{equation}

The relations \eqref{e1sub} and \eqref{e2sub} follow that $S$ is right $\varphi$-amenable.

Consider the canonical bounded linear map $\Psi:LUC(T)\longrightarrow LUC(S)$ such that $\Psi(f)|_{S\setminus T}=0$, for all $\varphi_T\neq f\in LUC(T)$ and $\Psi(\varphi_T)=\varphi$. Let $\mathbf{m}$ be a linear functional defined on $LUC(S)$ that satisfies  Theorem \ref{t1}. We now show that $m=\Psi^*(\mathbf{m})$ is a $\varphi$-mean for $T$.

For an arbitrary $f\in LUC(T)$, $s\in S$ and $t\in T$,
$$\left(\Psi(s\cdot f)-s\cdot\Psi(f)\right)(t)=(s\cdot f)(t)-\Psi(f)(ts)=f(ts)-f(ts)=0.$$

Hence, $\left(\Psi(s\cdot f)-s\cdot\Psi(f)\right)|_T=0$, and
$$|(s\cdot f)(t)-s\cdot \Psi(f)(t)|\leq\|\Psi(s\cdot f)-s\cdot\Psi(f)\|_\infty\chi_{S\setminus T},$$
where $\chi_{S\setminus T}$ is the characteristic function on $S\setminus T$. Thus,
\begin{equation}\label{e3sub}
    \mathbf{m}(\Psi(s\cdot f))=\mathbf{m}(s\cdot\Psi(f))\hspace{1cm}(f\in LUC(T),\ s\in S).
\end{equation}

Therefore,
\begin{eqnarray}\label{e4sub}
\nonumber
m(s\cdot f)&=&\Psi^*(\mathbf{m})(s\cdot f)=\mathbf{m}(\Psi(s\cdot f))=\mathbf{m}(s\cdot\Psi(f))=\varphi(s)\mathbf{m}(\Psi(f))\\
&=&\varphi(s)m(f),
\end{eqnarray}
and
\begin{equation}\label{e5sub}
  m(\varphi_T)=\Psi^*(\mathbf{m})(\varphi_T) =\mathbf{m}(\Psi(\varphi_T))=\mathbf{m}(\varphi)=1,
\end{equation}
for all $f\in LUC(T)$ and $s\in S$. Thus, $T$ is strongly right $\varphi_T$-amenable.
\end{proof}

We finish this section with the following result:
\begin{prop}
Let $\{S_\alpha\}_{\alpha\in I}$ be a family of closed subsemigroups of topological semigroup $S$ such that $S_\alpha$ is left (right) $\varphi_\alpha$-amenable, for each $\alpha\in I$, where $\varphi_\alpha\in\Delta_{S_\alpha}(S_\alpha)$. Let the following conditions hold:
\begin{itemize}
  \item[(i)] for every $S_\alpha$, $S_\beta$ that are left (right) $\varphi_\alpha$-amenable and $\varphi_\beta$-amenable, respectively, there is a $S_\gamma$ such that $S_\alpha\cup S_\beta\subseteq S_\gamma$ and $S_\gamma$ is $\varphi_\gamma$-amenable.
  \item[(ii)] $S=\overline{\bigcup_{\alpha\in I} S_\alpha}$.
\end{itemize}

Let $\varphi$ be a function on $S$ such that
$$\varphi(st)=\left\{
  \begin{array}{ll}
   \varphi_\alpha(st) &\text{if} \ s,t\in S_\alpha \\
  \varphi_\gamma(st)  & \text{if} \ s\in S_\alpha, t\in S_\beta\ \text{and}\ S_\alpha\cup S_\beta\subseteq S_\gamma
  \end{array}
\right.$$

 Then $\varphi$ is a character on $S$ and $S$ is left (right) $\varphi$-amenable.
\end{prop}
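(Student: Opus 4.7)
The plan is to glue the given $\varphi_\alpha$-means on the subsemigroups $S_\alpha$ into a $\varphi$-mean on $S$ by a weak-star compactness argument, and then invoke Theorem \ref{t1}. Before that, the recipe defining $\varphi$ must be shown to yield a character on $S$.

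First I would set $\varphi(s) := \varphi_\alpha(s)$ whenever $s \in S_\alpha$. If $s \in S_\alpha \cap S_\beta$, condition (i) provides $S_\gamma \supseteq S_\alpha \cup S_\beta$, and applying the displayed formula for $\varphi$ to $s^2$ and to products $st$ with $t$ ranging in $S_\alpha$ forces $\varphi_\gamma|_{S_\alpha} = \varphi_\alpha$ (and likewise on $S_\beta$), so the definition is unambiguous. Multiplicativity on $\bigcup_\alpha S_\alpha$ is then immediate from the formula: for $s \in S_\alpha$, $t \in S_\beta$ and $\gamma$ as in (i),
\begin{equation*}
\varphi(st) = \varphi_\gamma(st) = \varphi_\gamma(s)\varphi_\gamma(t) = \varphi(s)\varphi(t).
\end{equation*}
Continuity of each $\varphi_\alpha$ gives continuity of $\varphi$ on the set $\bigcup_\alpha S_\alpha$, which is dense in $S$ by (ii), so $\varphi$ extends uniquely to a continuous $\mathbb{T}$-valued map on $S$; joint continuity of the semigroup product propagates multiplicativity to the closure, so $\varphi \in \Delta_S(S)$.

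Next, for each $\alpha$ Theorem \ref{t1} furnishes $m_\alpha \in RUC(S_\alpha)^*$ with $\|m_\alpha\| \le 1$, $m_\alpha(\varphi_\alpha) = 1$, and $m_\alpha(g \cdot s) = \varphi_\alpha(s) m_\alpha(g)$ for all $g \in RUC(S_\alpha)$ and $s \in S_\alpha$. The restriction map $r_\alpha : RUC(S) \to RUC(S_\alpha)$ is well defined (right uniform continuity is preserved by restriction to a closed subsemigroup with the subspace topology), so $\widetilde m_\alpha := m_\alpha \circ r_\alpha$ lies in the closed unit ball of $RUC(S)^*$. Directing $I$ by $\alpha \preceq \gamma$ iff $S_\alpha \subseteq S_\gamma$ --- a genuine direction by (i) --- turns $(\widetilde m_\alpha)$ into a net in a weak-star compact ball; let $m$ be any weak-star cluster point. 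Then $\widetilde m_\alpha(\varphi) = m_\alpha(\varphi_\alpha) = 1$ for every $\alpha$ gives $m(\varphi) = 1$, and for $f \in RUC(S)$ and $s \in \bigcup_\beta S_\beta$ cofinally many $\alpha$ contain $s$ and satisfy
\begin{equation*}
\widetilde m_\alpha(f \cdot s) = m_\alpha(f|_{S_\alpha} \cdot s) = \varphi_\alpha(s)\, m_\alpha(f|_{S_\alpha}) = \varphi(s)\, \widetilde m_\alpha(f),
\end{equation*}
so $m(f \cdot s) = \varphi(s) m(f)$. Extending this identity from $\bigcup_\beta S_\beta$ to all $s \in S$ uses density in (ii), the norm-continuity of $s \mapsto f \cdot s$ (built into $f \in RUC(S)$), and continuity of $\varphi$. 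Theorem \ref{t1} then yields left $\varphi$-amenability of $S$; the right case is symmetric after swapping $RUC$ and $LUC$.

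The main obstacle is really the setup step, not the compactness argument: the proposition does not explicitly spell out the compatibility $\varphi_\gamma|_{S_\alpha} = \varphi_\alpha$ nor the continuous multiplicative extension of $\varphi$ across $\overline{\bigcup_\alpha S_\alpha} = S$, and both need to be read as implicit before the weak-star cluster step becomes the standard exercise it is.
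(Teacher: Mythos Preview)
Your argument is correct and follows essentially the same route as the paper: push each $m_\alpha$ forward to $RUC(S)^*$ via restriction, use weak-star compactness to extract a common functional, verify invariance on the dense union $\bigcup_\alpha S_\alpha$, and pass to the closure using right uniform continuity of $f$ together with continuity of $\varphi$. The only cosmetic difference is that the paper phrases the compactness step as nonemptiness of $\bigcap_\alpha M_\alpha$ via the finite intersection property (directedness from (i)), whereas you take a weak-star cluster point of the net $(\widetilde m_\alpha)$ indexed by the same directed set. One small point: Theorem~\ref{t1} as stated does not literally give $\|m_\alpha\|\le 1$, only boundedness; you need a short extra argument (e.g.\ twist by $\bar\varphi$ to reduce to an honest invariant mean and twist back) to place all $\widetilde m_\alpha$ in a fixed ball---the paper glosses over the same issue when invoking $w^*$-compactness of the $M_\alpha$.
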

\begin{proof}
Clearly, $\varphi$ is a character on $S$. Let $m_\alpha$ be a bounded linear functional on $RUC(S_\alpha)$, for every $\alpha\in I$, that satisfies  Theorem \ref{t1}. Define
$$m_\alpha'(f)=m_\alpha(f|_{S_\alpha}),$$
for every $f\in RUC(S)$. Let $M_\alpha$ be the $w^*$-closed set of all bounded linear functionals on $RUC(S)$ such as $m$ such that $m(\varphi_\alpha)=1$ and $m(f\cdot s)=\varphi_\alpha(s) m(f)$, for all $f\in RUC(S)$ and $s\in S_\alpha$. According to the definition of $m_\alpha'$, it belongs to $M_\alpha$. This means that $M_\alpha$ is not empty and it is obvious that the augmentation character $1_S$ is in $RUC(S)$. These imply that $\bigcap_{\alpha\in I}M_\alpha$ is not empty. Now, let $\mathbf{m}\in \bigcap_{\alpha\in I}M_\alpha$. Then the case (i) and definition of $\varphi$ together imply that $\mathbf{m}(\varphi)=1$. Moreover, the cases (i), (ii) and definition of $\varphi$ together imply that for all $f\in RUC(S)$ and $s\in\bigcup_{\alpha\in I}S_\alpha$, there exists $\gamma\in I$ such that  $s\in S_\gamma$ and %$m_\gamma$ on $RUC(S_\gamma)$ such that
\begin{equation}\label{e1fh}
\mathbf{m}(f\cdot s)=\varphi_\gamma(s)m_\gamma(f|_{S_\gamma})=\varphi(s)\mathbf{m}(f).
\end{equation}

Since $f\in RUC(S)$, for every $s\in S$ and  $\varepsilon>0$, there is a net $(t_\beta)_\beta\subseteq\bigcup_{\alpha\in I}S_\alpha$ such that $\|(f\cdot s)-(f\cdot t_\beta)\|_\infty<\varepsilon/2\|\mathbf{m}\|$ and $|\varphi(t_\beta)-\varphi(s)|<\varepsilon/2\|\mathbf{m}\|$. Then \eqref{e1fh} implies that
\begin{eqnarray*}
% \nonumber to remove numbering (before each equation)
  |\mathbf{m}(f\cdot s)-\varphi(s)\mathbf{m}(f)| &=&|\mathbf{m}(f\cdot s)-\mathbf{m}(f\cdot t_\beta)+\mathbf{m}(f\cdot t_\beta)-\varphi(s)\mathbf{m}(f)|  \\
   &=& |\mathbf{m}(f\cdot s)-\mathbf{m}(f\cdot t_\beta)+\varphi(t_\beta)\mathbf{m}(f)-\varphi(s)\mathbf{m}(f)|  \\
   &\leq&|\mathbf{m}(f\cdot s)-\mathbf{m}(f\cdot t_\beta)|+|\varphi(t_\beta)-\varphi(s)||\mathbf{m}(f)|\\
   &\leq&\|\mathbf{m}\| \|(f\cdot s)-(f\cdot t_\beta)\|_\infty+|\varphi(t_\beta)-\varphi(s)|\|\mathbf{m}\|\\
   &<&\varepsilon.
\end{eqnarray*}

Since $\varepsilon$ was arbitrary, $\mathbf{m}(f\cdot s)=\varphi(s)\mathbf{m}(f)$, for all $f\in RUC(S)$ and $s\in S$. This means that $S$ is left $\varphi$-amenable.
\end{proof}

%%%%%%%%%%%%%%%%%%%%%%%%%%%%%%%%%%%%%%
%%%%%%%%%%%%%%%%%%%%%%%%%%%%%%%%%%%%%%
%%%%%%%%%%%%%%%%%%%%%%%%%%%%%%%%%%%%%%
\section{$\varphi$-Ergodic Properties}
%%%%%%%%%%%%%%%%%%%%%%%%%%%%%%%%%%%%%%
%%%%%%%%%%%%%%%%%%%%%%%%%%%%%%%%%%%%%%
%%%%%%%%%%%%%%%%%%%%%%%%%%%%%%%%%%%%%%

Let $S$ be a topological semigroup, $X$ be a Banach space and $\mathbf{B}(X)$ be the Banach space of all bounded operators on $X$. An antirepresentation  of $S$ on $X$ is a function $F:s\longmapsto F_s$ such that $F_{st}=F_tF_s$, for each $s,t\in S$. For each $s\in S$, define $L=\ell(s)=\ell_s$ in $\mathbf{B}(C(S))$ by $Lf(t)=f(st)$, for all $t\in S$ and $f\in C(S)$. The function $\ell$ is the antirepresentation  of $S$ on $X$.

 Let $\varphi\in\Delta_S(S)$, Similar to \cite{day1}, we define the following sets that we work on them in this section:
\begin{itemize}
  \item[] $P(\varphi)=\{s\in S:\varphi(s)=1\}$,
  \item[] $M_0^\varphi(\ell)=\{x\in X:(\ell_s-I)x=0,\ s\in P(\varphi)\}$,
  \item[] $M_1^\varphi(\ell)=\text{closed linear hull of }\{(\ell_s-I)x: x\in X, \ s\in P(\varphi)\}$,
  \item[] $M_\varphi(\ell)=M_0^\varphi(\ell_s)+M_1^\varphi(\ell_s)$,
  \item[] $N_x^\varphi(\ell)=\text{the closure of }\{\ell_s(x):s\in P(\varphi)\}$, for every $x\in X$.
\end{itemize}

Now, we generalize the ergodicity of the  antirepresentation $\ell_s$ of $S$ into $X$ as follows:
\begin{defn}\label{de}
Let $S$ be a topological semigroup, $X$ be a Banach space and $\varphi\in\Delta_S(S)$. We say that the antirepresentation $\ell$ from $S$ into $X$ is left $\varphi$-ergodic if there is a bounded net $(B_\delta)_{\delta\in I}$ in $\mathbf{B}(X)$ such that
\begin{itemize}
  \item[($E_1$)] $\lim_\delta B_\delta(\ell_s-I)=0$ in strong operator topology of $\mathbf{B}(X)$, for every $s\in P(\varphi)$.
  \item[($E_2$)] $B_\delta(x)\in N_x^\varphi(\ell_s)$, for each $x\in X$ and $\delta\in I$.
\end{itemize}

Similarly, we call the antirepresentation $\ell$ from $S$ into $X$ is right $\varphi$-ergodic if there is a bounded net $(B_\delta)_{\delta\in I}$ in $\mathbf{B}(X)$ such that satisfies ($E_2$) and the following condition:
\begin{itemize}
  \item[($E_3$)] $\lim_\delta (\ell_s-I)B_\delta=0$ in strong operator topology of $\mathbf{B}(X)$, for every $s\in P(\varphi)$.
\end{itemize}

If the antirepresentation $\ell$ from $S$ into $X$ is right and left $\varphi$-ergodic, we call it $\varphi$-ergodic.
\end{defn}

The following result is the  generalization of the obtained results by Eberlein in \cite{eb} where the proof is similar and we give it proof for clearness:
\begin{thm}\label{te1}
Let $S$ be a topological semigroup, $X$ be a Banach space and $\varphi\in\Delta_S(S)$. Assume that the antirepresentation $\ell$ from $S$ into $X$ is left $\varphi$-ergodic with $(B_\delta)_{\delta\in I}$ in $\mathbf{B}(X)$ that satisfies  the cases ($E_1$) and ($E_2$). Then
\begin{itemize}
  \item[(i)] $B_\delta(x)=x$, for all $x\in M_0^\varphi(\ell)$ and $\delta\in I$.
  \item[(ii)] $B_\delta(x)\longrightarrow0$, for every $x\in M_1^\varphi(\ell)$.
  \item[(iii)] $(B_\delta(x))_{\delta\in I}$ is norm convergent to an element of $M_0^\varphi(\ell)\cap N_x^\varphi(\ell)$.
  \item[(iv)] $M_\varphi(\ell)=M_0^\varphi(\ell)\oplus M_1^\varphi(\ell)$.
  \item[(v)] $\ell_s(M_\varphi(\ell))\subseteq M_\varphi(\ell)$ and $N_x^\varphi(\ell)\subseteq M_\varphi(\ell)$, for all $s\in P(\varphi)$ and $x\in  M_\varphi(\ell)$.
   \item[(vi)]  $B_\delta(M_\varphi(\ell))\subseteq M_\varphi(\ell)$ for all $\delta\in I$.
    \item[(vii)] suppose that $\pi:M_\varphi(\ell)\longrightarrow M_0^\varphi(\ell)$ is a projection associated with the direct sum decomposition (iv), then $B_\delta(x)\longrightarrow \pi(x)$ and $M_0^\varphi(\ell)\cap N_x^\varphi(\ell)=\{\pi x\}$, for all $x\in M_\varphi(\ell)$.
\end{itemize}
\end{thm}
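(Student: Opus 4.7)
The plan is to exploit the two defining conditions $(E_1)$ and $(E_2)$ of $\varphi$-ergodicity, proving the seven items in a convenient order so that each can appeal to the previous ones. I would in fact treat (i), (ii), (iv), (iii), (v), (vi), (vii) in that sequence, since (iv) needs only (i) and (ii) and then unlocks the clean statement of (iii).

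For (i), if $x\in M_0^\varphi(\ell)$ then $\ell_s x=x$ for every $s\in P(\varphi)$, hence $N_x^\varphi(\ell)=\{x\}$, and $(E_2)$ immediately forces $B_\delta(x)=x$. For (ii), I would first handle generators: if $x=(\ell_s-I)y$ with $s\in P(\varphi)$ and $y\in X$, then $(E_1)$ gives $B_\delta x\to 0$; linearity extends this to finite linear combinations of such generators, and the uniform bound $K:=\sup_\delta\|B_\delta\|<\infty$ together with a standard $3\varepsilon$-approximation extends it to the whole norm closure $M_1^\varphi(\ell)$. Part (iv) is then a formal consequence of (i) and (ii): if $x\in M_0^\varphi(\ell)\cap M_1^\varphi(\ell)$ then (i) gives $B_\delta x=x$ while (ii) gives $B_\delta x\to 0$, so $x=0$ and the sum is direct.

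With (iv) in hand, (iii) follows for $x\in M_\varphi(\ell)$ by writing $x=x_0+x_1$ uniquely with $x_0\in M_0^\varphi$, $x_1\in M_1^\varphi$, and combining (i), (ii) to get $B_\delta x=x_0+B_\delta x_1\to x_0$; since $(E_2)$ places each $B_\delta x$ in the norm-closed set $N_x^\varphi(\ell)$, the limit $x_0$ lies in $M_0^\varphi(\ell)\cap N_x^\varphi(\ell)$. For (v), the inclusion $\ell_s(M_\varphi(\ell))\subseteq M_\varphi(\ell)$ for $s\in P(\varphi)$ is obtained by noting $\ell_s x_0=x_0\in M_0^\varphi$ and $\ell_s x_1=x_1+(\ell_s-I)x_1\in M_1^\varphi$. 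The second inclusion $N_x^\varphi(\ell)\subseteq M_\varphi(\ell)$ for $x\in M_\varphi(\ell)$ is the main subtle point: given $y\in N_x^\varphi$, one writes $y=\lim_\alpha \ell_{s_\alpha}x$ with $s_\alpha\in P(\varphi)$, observes that $y-x=\lim_\alpha(\ell_{s_\alpha}-I)x$ belongs to the closed subspace $M_1^\varphi(\ell)$, and concludes $y=x+(y-x)\in M_\varphi(\ell)$.

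Part (vi) is then immediate from $(E_2)$ and (v): $B_\delta x\in N_x^\varphi\subseteq M_\varphi$. Finally, for (vii), the convergence $B_\delta x\to\pi x$ is (iii) reread through the projection $\pi$ onto $M_0^\varphi$, while the equality $M_0^\varphi\cap N_x^\varphi=\{\pi x\}$ splits into two halves: the inclusion $\pi x\in M_0^\varphi\cap N_x^\varphi$ is (iii), and for the converse, if $y\in M_0^\varphi\cap N_x^\varphi$, then $y=\lim_\alpha\ell_{s_\alpha}x$ gives $x-y=-\lim_\alpha(\ell_{s_\alpha}-I)x\in M_1^\varphi$, so $x=y+(x-y)$ is the unique decomposition in $M_0^\varphi\oplus M_1^\varphi$, forcing $y=\pi x$ by (iv). The main obstacle I foresee is precisely the closure argument invoked in (v) (and reused in (vii)) showing that a limit of elements of the form $(\ell_{s_\alpha}-I)x$ stays inside $M_1^\varphi(\ell)$; once that is in place, everything else is bookkeeping on top of the two generator-level computations in (i) and (ii).
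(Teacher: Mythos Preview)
Your proposal is correct and follows essentially the same line as the paper's proof, with only cosmetic differences in ordering (the paper treats (iii) before (iv)) and a slightly more direct handling of the second half of (v). Incidentally, the ``main obstacle'' you flag is not one: $M_1^\varphi(\ell)$ is closed by definition, so a norm limit of elements $(\ell_{s_\alpha}-I)x$ automatically lies in it.
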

\begin{proof}
(i) If $x\in M_0^\varphi(\ell)$, then $\ell_sx=Ix=x$. This implies that $N_x^\varphi(\ell)=\{x\}$ and consequently, $E_2$ leads  $B_\delta(x)=x$, for every $\delta\in I$.

(ii) Assume that $x\in M_1^\varphi(\ell)$. Then  the case ($E_1$)  together with $(B_\delta)_{\delta\in I}$ is bounded, we have $B_\delta(x)\longrightarrow0$.

(iii) The cases (i) and (ii) together imply  this case.

(iv) The cases (i) and (ii) together imply $M_0^\varphi(\ell)\cap M_1^\varphi(\ell)=\{0\}$ and this means that $M_\varphi(\ell)=M_0^\varphi(\ell)\oplus M_1^\varphi(\ell)$.

(v) For all $s,t\in P(\varphi)$ and $x\in X$, we have
$$\ell_s(\ell_t-I)(x)=(\ell_{ts}-\ell_s)(x)=(\ell_{ts}-I)(x)-(\ell_s-I)(x)\in M_1^\varphi(\ell),$$
because  $ts\in P(\varphi)$. This means that $\ell_s(M_1^\varphi(\ell))\subseteq M_1^\varphi(\ell)$. Then by applying (i) and (iv), we have $\ell_s(M_\varphi(\ell))\subseteq M_\varphi(\ell)$.

For showing that $N_x^\varphi(\ell)\subseteq M_\varphi(\ell)$, for every $x\in  M_\varphi(\ell)$, pick $x\in  M_\varphi(\ell)$ and let $y\in N_x^\varphi(\ell)$. According to the definition of  $N_x^\varphi(\ell)$, there is a net $(s_\alpha)_{\alpha\in J}$ in $P(\varphi)$ such that $\ell_{s_\alpha}(x)\longrightarrow y$. Then (iv) implies that there exist $e\in M_0^\varphi(\ell)$ and $k\in M_1^\varphi(\ell)$ such that $x=e+k$. Since $\ell_s(M_1^\varphi(\ell))\subseteq M_1^\varphi(\ell)$ and  $M_1^\varphi(\ell)$ is closed, by (i) we have
$$y-e=\lim_\alpha\ell_{s_\alpha}(x-e)=\lim_\alpha\ell_{s_\alpha}(k)\in M_1^\varphi(\ell).$$

Again by (iv), we conclude that $N_x^\varphi(\ell)\subseteq M_\varphi(\ell)$, for every  $x\in  M_\varphi(\ell)$.

(vi) Apply (v) and ($E_1$).

(vii) The parts (i) and (ii) imply that $B_\delta(x)\longrightarrow \pi(x)$, for all $x\in M_\varphi(\ell)$. Let $x\in M_\varphi(\ell)$ be arbitrary. Then by the parts (i) and (ii) we have
\begin{equation}\label{e1te1}
   \pi(x)\in M_0^\varphi(\ell)\cap N_x^\varphi(\ell).
\end{equation}

Now, assume that $y\in M_0^\varphi(\ell)\cap N_x^\varphi(\ell)$. Since $y\in N_x^\varphi(\ell)$, there is a net $(s_\alpha)_{\alpha\in J}$ in $P(\varphi)$ such that $\ell_{s_\alpha}(x)\longrightarrow y$. Then
$$y-x=\lim_\alpha (\ell_{s_\alpha}-I)(x)\in  M_1^\varphi(\ell).$$

This shows that $\pi(y-x)=0$. Thus, $\pi(x)=y$ and this completes the proof.
\end{proof}

The above Theorem immediately follows the following result that is a generalization of the obtained results in \cite{day1}.
\begin{prop}\label{pe1}
Let $S$ be a topological semigroup, $X$ be a Banach space and $\varphi\in\Delta_S(S)$. Assume that the antirepresentation $\ell$ from $S$ into $X$ is  $\varphi$-ergodic with $(B_\delta)_{\delta\in I}$ in $\mathbf{B}(X)$ that satisfies  ($E_1$), ($E_2$) and ($E_3$). Then the following statements hold.
\begin{itemize}
  \item[(i)] $M_\varphi(\ell)$ is closed in $X$.
  \item[(ii)] If $N_x^\varphi(\ell)$ is weakly compact, for every $x\in X$, then $M_\varphi(\ell)=X$.
\end{itemize}
\end{prop}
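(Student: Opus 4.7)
My plan for part (i) is to exploit the projection $\pi:M_\varphi(\ell)\longrightarrow M_0^\varphi(\ell)$ constructed in Theorem \ref{te1}(vii). Since $B_\delta(x)\longrightarrow\pi(x)$ for every $x\in M_\varphi(\ell)$ and the net $(B_\delta)$ is uniformly bounded, say by $M$, the inequality $\|\pi(x)\|\leq M\|x\|$ makes $\pi$ a bounded linear idempotent on $M_\varphi(\ell)$. Given a sequence $(x_n)\subseteq M_\varphi(\ell)$ with $x_n\longrightarrow x$ in $X$, this Lipschitz bound forces $(\pi(x_n))$ to be Cauchy; its limit $e$ lies in the closed subspace $M_0^\varphi(\ell)=\bigcap_{s\in P(\varphi)}\ker(\ell_s-I)$. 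Then $x_n-\pi(x_n)\longrightarrow x-e$, and since each difference lies in the closed space $M_1^\varphi(\ell)$, so does the limit. Hence $x=e+(x-e)\in M_\varphi(\ell)$, which proves closedness.

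For part (ii), fix an arbitrary $x\in X$. By $(E_2)$, $B_\delta(x)\in N_x^\varphi(\ell)$, and the weak compactness assumption furnishes a subnet $(B_{\delta_\alpha}(x))$ converging weakly to some $y\in N_x^\varphi(\ell)$. Since $(E_3)$ gives $(\ell_s-I)B_\delta(x)\longrightarrow 0$ strongly (hence weakly) for every $s\in P(\varphi)$, passing to the weak limit yields $(\ell_s-I)y=0$ for all such $s$, so that $y\in M_0^\varphi(\ell)$. To handle $x-y$, I will use that each $B_\delta(x)\in N_x^\varphi(\ell)=\overline{\{\ell_s(x):s\in P(\varphi)\}}$, so $x-B_\delta(x)$ is a norm limit of elements of the form $-(\ell_{s_\beta}-I)(x)\in M_1^\varphi(\ell)$; by closedness of $M_1^\varphi(\ell)$ each $x-B_\delta(x)$ itself lies in $M_1^\varphi(\ell)$. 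Taking the weak limit along the subnet and invoking Mazur's theorem (every norm-closed linear subspace is weakly closed) then gives $x-y\in M_1^\varphi(\ell)$, so that $x=y+(x-y)\in M_\varphi(\ell)$. Since $x$ was arbitrary, $M_\varphi(\ell)=X$.

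The main obstacle I anticipate is in part (ii): one must interweave three different topologies --- the weak topology coming from weak compactness of $N_x^\varphi(\ell)$, the strong operator convergence supplied by $(E_3)$, and the norm-closedness of $M_1^\varphi(\ell)$. Once Mazur's theorem bridges the last two and the weak-to-weak passage handles the first, the argument falls out cleanly; part (i) is then essentially a bookkeeping exercise once one notices that $\pi$ is automatically bounded by the uniform bound on $(B_\delta)$.
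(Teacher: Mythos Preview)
Your proof is correct. Part (ii) follows the paper's route almost exactly: extract a weakly convergent subnet using compactness of $N_x^\varphi(\ell)$, use $(E_3)$ and weak-to-weak continuity of $\ell_s-I$ to place the limit $y$ in $M_0^\varphi(\ell)$, and then argue that $x-y\in M_1^\varphi(\ell)$. The paper in fact writes the last step as a bare assertion (``$x=y+(x-y)\in M_0^\varphi(\ell)+M_1^\varphi(\ell)$''); your use of the fact that each $x-B_\delta(x)$ already lies in $M_1^\varphi(\ell)$, together with Mazur's theorem to pass to the weak limit, is exactly what is needed to justify it.

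Your argument for (i), however, is genuinely different from the paper's. The paper proves a single lemma --- if $B_\delta x\to y$ weakly then $x\in M_\varphi(\ell)$ --- using $(E_3)$, and then derives both (i) and (ii) from it. You instead observe that the projection $\pi$ of Theorem~\ref{te1}(vii) is bounded by the uniform bound on $(B_\delta)$, and run a direct Cauchy-sequence argument in the two closed summands $M_0^\varphi(\ell)$ and $M_1^\varphi(\ell)$. This has the pleasant side effect that your proof of (i) uses only $(E_1)$ and $(E_2)$, i.e.\ only left $\varphi$-ergodicity, whereas the paper's unified lemma leans on $(E_3)$. The trade-off is that the paper's approach gives (i) and (ii) by the same mechanism, while yours treats them separately; but your (i) is slightly sharper in its hypotheses and arguably more transparent.
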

\begin{proof}
(i) Theorem \ref{te1}(iii) implies that $(B_\delta)_{\delta\in I}$ is norm convergent to an element of $M_0^\varphi(\ell)\cap N_x^\varphi(\ell)$, for every $x\in M_\varphi(\ell)$. This follows that $(B_\delta)_{\delta\in I}$ is weakly convergent to an element of $M_0^\varphi(\ell)\cap N_x^\varphi(\ell)$, for every $x\in M_\varphi(\ell)$.

Let $x\in X$ and $B_\delta x\longrightarrow y$ weakly in $X$, for some $y\in X$. We shall show that $y\in M_\varphi(\ell)$.  For any $s\in P(\varphi)$ and $T\in X^*$, we have
\begin{eqnarray*}
% \nonumber to remove numbering (before each equation)
  T(\ell_s)(y) &=& \lim_\delta T(\ell_s)(B_\delta(x)) = \lim_\delta [T(\ell_s-I)(B_\delta(x))+T(B_\delta(x))]\\
   &=&  T(y).
\end{eqnarray*}

This means that $\ell_s(y)=y$ and consequently, $y\in  M_0^\varphi(\ell)$. Moreover, for every $x\in X$,  $N_x^\varphi(\ell)$  is convex and norm closed in $X$, so, it is weakly closed and $y\in N_x^\varphi(\ell)$. Thus, $y\in M_0^\varphi(\ell)\cap N_x^\varphi(\ell)$. Then
$$x=y+(x-y)\in  M_0^\varphi(\ell)+ M_1^\varphi(\ell)= M_\varphi(\ell).$$

Hence, $M_\varphi(\ell)$ is closed in $X$.

(ii) Assume that $N_x^\varphi(\ell)$ is weakly compact, for every $x\in X$. Let $y\in X$ be an arbitrary element. The fact $(B_\delta)_{\delta\in I}\in N_x^\varphi(\ell)$ implies that there is a subnet $(B_{\delta_\gamma})$ of $(B_\delta)_{\delta\in I}$ such that $B_{\delta_\gamma}(y)$ is weakly convergent in $N_x^\varphi(\ell)$. Now, if we replace  $(B_\delta)_{\delta\in I}$ by $(B_{\delta_\gamma})$ in the proof of the part (i), then $y\in  M_\varphi(\ell)$. Thus, $X= M_\varphi(\ell)$.
\end{proof}

Let $S$ be a locally compact topological semigroup and  $X$ be a right Banach $S$-module. Then we can see $X$ as a right Banach $\ell^1(S)$-module as follows:
$$xf=\int_Sxs\ \textrm{d}f(s),$$
for all $x\in X$ and $f\in\ell^1(S)$; see \cite[Proposition 5.6]{pet} for more details. Furthermore, for every $T\in X^*$, we define
$$T(xf)=f(Tx),$$
for all $x\in X$ and $f\in\ell^1(S)$.
We now give a relation between left $\varphi$ amenability and left $\varphi$-ergodicity of an antirepresentions as follows:
\begin{thm}\label{tea}
Let $S$ be a locally compact topological semigroup, $X$ be a Banach space, $\varphi\in\Delta_S(S)$ and  $\ell$ be the right action of $S$ on $X$ i.e. $\ell_s(x)=xs$. If $N_x^\varphi(\ell)$ is weakly compact, for every $x\in X$, then the following statements are equivalent:
\begin{itemize}
  \item[(i)] $S$ is left $\varphi$-amenable.
  \item[(ii)] the antirepresention $\ell$ is left $\varphi$-ergodic.
\end{itemize}
\end{thm}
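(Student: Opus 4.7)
The plan is to translate between $\varphi$-amenability and $\varphi$-ergodicity by means of Theorem \ref{fs1}, using asymptotically $\varphi$-invariant nets in $\ell^1(S)$ as the bridge.

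For the implication (i)$\Longrightarrow$(ii), I would apply Theorem \ref{fs1} to produce a bounded net $(f_\alpha)_{\alpha\in I}\subseteq\ell^1(S)$ with $\|s\cdot f_\alpha-\varphi(s)f_\alpha\|_1\longrightarrow 0$ for every $s\in S$ and with $\widehat{f_\alpha}(\varphi)\longrightarrow 1$. A standard Day-type convexity argument inside $\ell^1(P(\varphi))$, where $P(\varphi)=\varphi^{-1}(1)$ is a subsemigroup of $S$, lets me replace the $f_\alpha$ by finite means supported on $P(\varphi)$ without destroying the asymptotic invariance. Then, using the right $\ell^1(S)$-module structure on $X$ inherited from the right action, I define averaging operators $B_\alpha\in\mathbf{B}(X)$ by
\[
B_\alpha(x) \;:=\; x f_\alpha \;=\; \int_S xs \, df_\alpha(s).
\]
For $s\in P(\varphi)$ (so $\varphi(s)=1$),
\[
\|B_\alpha(\ell_s-I)(x)\| \;=\; \|x(s\cdot f_\alpha-f_\alpha)\| \;\leq\; \|x\|\,\|s\cdot f_\alpha-\varphi(s)f_\alpha\|_1 \;\longrightarrow 0,
\]
which yields $(E_1)$, while $B_\alpha(x)$ is a convex combination of elements $\ell_{s_i}(x)$ with $s_i\in P(\varphi)$, giving $(E_2)$.

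For (ii)$\Longrightarrow$(i), the idea is to reverse this and extract from the ergodic net a functional $m$ witnessing Theorem \ref{t1}. I would specialize to the natural choice $X=RUC(S)$ with the right $S$-action $(f\cdot s)(t):=f(st)$, under which $\ell_s=l_s$ and $\varphi\in M_0^\varphi(\ell)$ since $l_s\varphi=\varphi(s)\varphi=\varphi$ for $s\in P(\varphi)$. Weak compactness of $N_x^\varphi(\ell)$ then allows an application of Proposition \ref{pe1}(ii) to obtain $M_\varphi(\ell)=X$ and the projection $\pi:X\to M_0^\varphi(\ell)$ from Theorem \ref{te1}(vii) with $B_\delta(x)\longrightarrow\pi(x)$. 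Taking a fixed functional $\varepsilon\in X^*$ (for instance, a weak$^*$-limit of point evaluations or a pairing selected so as to match $\varphi$ correctly), I would define $m_\delta(f):=\varepsilon(B_\delta f)$, extract a $w^*$-cluster point $m$, and read off $m(\varphi)=1$ from the normalization implicit in $(E_2)$ applied at $x=\varphi$. The covariance $m(f\cdot s)=\varphi(s)m(f)$ then follows first on $P(\varphi)$ from $(E_1)$ and is promoted to all of $S$ via the multiplicativity $\ell_s\varphi=\varphi(s)\varphi$.

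The main obstacle is this last promotion in the converse direction: the ergodic operators $(B_\delta)$ only guarantee invariance on the subsemigroup $P(\varphi)$, whereas a $\varphi$-mean must transform covariantly under all of $S$. Bridging this gap requires exploiting the character relation $\varphi(st)=\varphi(s)\varphi(t)$ together with the normalization $m(\varphi)=1$, and, crucially, choosing the pairing $\varepsilon$ used to extract $m$ so that both identities hold simultaneously. A secondary technical point is the Day-type convexification in the forward direction, which uses that the set of finite means supported on $P(\varphi)$ is weakly dense in the set of finitely supported $f\in\ell^1(S)$ with $\widehat{f}(\varphi)=1$ in the relevant topology on $\ell^1(S)^S$.
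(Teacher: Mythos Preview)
Your forward direction (i)$\Longrightarrow$(ii) matches the paper's: both invoke Theorem~\ref{fs1}, set $B_\alpha:=\ell_{f_\alpha}$, and verify $(E_1)$ via $\|B_\alpha(\ell_s-I)\|\le\|\ell\|\,\|s\cdot f_\alpha-\varphi(s)f_\alpha\|_1\to 0$ for $s\in P(\varphi)$. For $(E_2)$ the paper uses a separation argument showing that, under the weak compactness hypothesis, $N_x^\varphi(\ell)$ coincides with the closure of $\{\ell_f(x):f\text{ a finite mean}\}$, whereas you reach the same point by first convexifying to finite means supported on $P(\varphi)$; these are minor variants of one another.

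For (ii)$\Longrightarrow$(i) the routes diverge. The paper does \emph{not} use the translation action on $RUC(S)$ that you propose; it instead takes the scalar action $\ell_s(f):=\varphi(s)f$. For this choice your promotion obstacle disappears, since $m(f\cdot s)=m(\varphi(s)f)=\varphi(s)m(f)$ holds for every linear $m$ and every $s\in S$ automatically. The paper then notes $\varphi\in M_0^\varphi(\ell)$, invokes Theorem~\ref{te1}(iv) to get $\varphi\notin M_1^\varphi(\ell)$, and applies Hahn--Banach directly to produce $m$ with $m(\varphi)=1$ and $m|_{M_1^\varphi(\ell)}=0$; there is no need for your projection $\pi$, the auxiliary functional $\varepsilon$, or any $w^*$-cluster extraction.

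This device buys the $S$-covariance for free, but at a price you should notice: with $\ell_s=\mathrm{id}$ on $P(\varphi)$ one has $M_1^\varphi(\ell)=\{0\}$ and ergodicity holds trivially (take $B_\delta=\mathrm{id}$), so hypothesis~(ii) is not genuinely used; and the relation $m(f\cdot s)=\varphi(s)m(f)$ so obtained is for the scalar action $f\cdot s=\varphi(s)f$, not the translation action $(f\cdot s)(t)=f(st)$ that Theorem~\ref{t1}(ii) actually demands. Your instinct to work with the translation action and to flag the $P(\varphi)\to S$ promotion as the real crux is therefore well placed; the paper's argument sidesteps rather than resolves it, and neither proof as written closes that gap cleanly.
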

\begin{proof}
 (i)$\Longrightarrow$ (ii) Assume that $\ell$ is an antirepresentaion from $S$ into $X$. Theorem \ref{fs1} implies that there is a bounded net $(f_\alpha)_{\alpha\in I}\subseteq \ell^1(S)$ such that $\|s\cdot f_\alpha-\varphi(s)f_\alpha\|_1\longrightarrow0$ and its $w^*-\lim$ on $\varphi$ is 1. Put $B_\alpha=\ell_{f_\alpha}$, for all $\alpha\in I$. Clearly, $B_\alpha$ is bounded in $\mathbf{B}(X)$. Moreover, by noting that for $s\in S$ and $x\in X$, $x\delta_s=xs$, then
 \begin{eqnarray}\label{etea1}
\nonumber
 \|B_\alpha(\ell_{s}-I)\|   &=&\|B_\alpha(\ell_{\delta_s}-I)\|=\|\ell_{f_\alpha}\ell_{\delta_s}-\ell_{f_\alpha}\|=\|\ell_{\delta_s\ast f_\alpha}-\ell_{f_\alpha}\|=\|\ell_{s\cdot f_\alpha-f_\alpha}\|  \\
    &\leq&  \|\ell\|\|s\cdot f_\alpha-f_\alpha\|_1.
 \end{eqnarray}

 Therefore, \eqref{etea1} implies that
\begin{equation}
   \|B_\alpha(\ell_{s}-I)\|\leq  \|\ell\|\|s\cdot f_\alpha-\varphi(s)f_\alpha\|_1\longrightarrow0,
\end{equation}
for all  $s\in P(\varphi)$. Now, we must show that $B_\alpha$ satisfies  ($E_2$) i.e., $B_\alpha\in N_x^\varphi(\ell)$, for all $x\in X$ and $\alpha\in I$.   Note that $$N_x^\varphi(\ell)\subseteq\overline{ \{\ell_f(x):f\in\ell^1(S)\ \text{such that} f \ \text{is a finite mean}\}}=K.$$

 We claim that $\subseteq$ must be equality in the case that $N_x^\varphi(\ell)$ is weakly compact, for every $x\in X$. Assume towards a contradiction that there esists $f\in\ell^1(S)$ such that $\ell_f\notin N_x^\varphi(\ell)$. Thus, there exist $T\in X^*$ and $r\in\mathbb{R}$ such that
\begin{equation}\label{etea2}
  \mathbf{Re}\  T(\ell_s(x))<r<\textbf{Re}\ T(\ell_{f}(x))
\end{equation}
for every $s\in S$.
 On the other hand,
 \begin{eqnarray}\label{etea2}
 \nonumber
 \textbf{Re}\ T(\ell_{f}(x))&=&\textbf{Re}\ T(\int_Sxs\ \textrm{d}f(s))=\int_S\textbf{Re}\ T(\ell_s(x))\ \textrm{d}f(s)\\
 &<& \mathbf{Re}\  T(\ell_s(x)).
\end{eqnarray}

A contradiction. Thus, $N_x^\varphi(\ell)= K$. This means that $B_\alpha$ satisfies  ($E_2$).

(ii)$\Longrightarrow$ (i) Let $X=RUC(S)$, $\varphi\in\Delta_S(S)$, and let $\ell$ be an antirepresntation from $S$ into $LUC(S)$ such that
$$\ell_s(f)=f\cdot s=\varphi(s)f,$$
for every $s\in S$ and
$$\ell_s(f)=f\cdot s=\varphi(s)f=f,$$
for every $s\in P(\varphi)$.  Then,
$$(\ell_s-I)(\varphi)=\ell_s(\varphi)-\varphi=\varphi\cdot s-\varphi=\varphi(s)\varphi-\varphi=0,$$
for every $s\in P(\varphi)$. This means that $\varphi\in M_0^\varphi(\ell)\subseteq RUC(S)$ and
 Theorem \ref{te1}(iv) follows that $\varphi\notin M_1^\varphi(\ell)\subseteq RUC(S)$. Thus, the Hahn-Banach Theorem implies that there exists $m\in RUC(S)^*$ such that $m(\varphi)=1$ and $m|_{M_1^\varphi(\ell)}=0$. Moreover,  we have
$$m(f\cdot s)=\varphi(s)m(f),$$
for all $s\in S$ and $f\in LUC(S)$. Thus $S$ is left $\varphi$-amenable.
\end{proof}
%%%%%%%%%%%%%%%%%%%%%%%%%%%%%%%%%%%%%%
%%%%%%%%%%%%%%%%%%%%%%%%%%%%%%%%%%%%%%
%%%%%%%%%%%%%%%%%%%%%%%%%%%%%%%%%%%%%%
\section{$\varphi$-Amenability and Fixed Point Property}
%%%%%%%%%%%%%%%%%%%%%%%%%%%%%%%%%%%%%%
%%%%%%%%%%%%%%%%%%%%%%%%%%%%%%%%%%%%%%
%%%%%%%%%%%%%%%%%%%%%%%%%%%%%%%%%%%%%%
Let $S$ be a topological semigroup, $C_r(S)$ be the space of all bounded real-valued functions on $S$ with supremum norm, $X$ be a translation-invariant closed subalgebra of $C_r(S)$ that contains the constant functions,  $Y$ be a compact Hausdorff space and $C_r(Y)$ be the space of all bounded real-valued continuous functions on $Y$, where $C_r(Y)$ has the supremum norm. Assume that $s\longmapsto\lambda_s$ is a representation of $S$ by continuous self-maps of $Y$. For every $y\in Y$, we define $T_y:C_r(Y)\longrightarrow C_r(S)$ by $T_y(h)(s)=h(\lambda_sy)$, for all $s\in S$ and $h\in C_r(Y)$. The representation $\lambda$ is called $D$-representation of $S$, $X$ on $Y$ if $\{y\in Y:\ T_y(C_r(Y))\subseteq X\}$ is dense in $Y$. If $s\longmapsto\lambda_s$ is a representation of $S$ by continuous affine self-maps of $Y$, then it is called $D$-representation of $S$, $X$ on $Y$ by continuous affine maps if $\{y\in Y:\ T_y(A(Y))\subseteq X\}$ is dense in $Y$.

The existence of a common fixed point of the family $\lambda_S$, whenever $X$ has a left invariant mean   is considered by Argabright in \cite{ar} and Mitchell in \cite{mit}. Indeed, Day's fixed point Theorem for topological semigroups is explained and investigated by Argabright and Mitchell works. Namioka in \cite{nam2} showed that $LUC(S)$ is a translation-invariant subspace of $C_r(S)$ \cite[Lemma 2]{nam2} and similarly, one can see that $RUC(S)$ is a translation-invariant subspace of $C_r(S)$. We recall the following result:
\begin{thm}\label{mit}\cite[Theorem 1]{mit}
Let $S$ be a topological semigroup. Then the following assertions equivalent:
\begin{itemize}
  \item[(i)] $RUC(S)$ has a multiplicative left invariant mean.
  \item[(ii)] whenever $S$ acts on a compact Hausdorg space $Y$, where the map $S\times Y\longrightarrow Y$ is jointly continuous, then $Y$ contains a common fixed point of $S$.
\end{itemize}
\end{thm}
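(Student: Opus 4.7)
The plan is to dispatch (i)$\Rightarrow$(ii) by applying Gelfand duality on $C_r(Y)$ through the multiplicative mean $m$, and to obtain (ii)$\Rightarrow$(i) by feeding the fixed point property to the canonical $S$-action on the spectrum of $RUC(S)$. Both directions are essentially one-line computations once the right objects are in place.

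For (i)$\Rightarrow$(ii), let $m$ be a multiplicative left invariant mean on $RUC(S)$, fix an arbitrary $y_0 \in Y$, and consider the unital algebra homomorphism $T_{y_0}: C_r(Y) \to C_r(S)$ given by $T_{y_0}h(s) = h(\lambda_s y_0)$. Joint continuity of $S \times Y \to Y$ applied on the compact set $\overline{\lambda_S y_0}$ guarantees $T_{y_0}h \in RUC(S)$, hence $F := m \circ T_{y_0}$ is a multiplicative mean on $C_r(Y)$. By Gelfand's theorem there is a unique $y_1 \in Y$ with $F(h) = h(y_1)$ for every $h \in C_r(Y)$. The representation property $\lambda_s\lambda_t = \lambda_{st}$ yields the key identity $T_{y_0}(h \circ \lambda_s) = l_s T_{y_0}h$, and combined with left invariance $m(l_s g) = m(g)$ this produces
\begin{equation*}
    h(\lambda_s y_1) = F(h \circ \lambda_s) = m(l_s T_{y_0}h) = m(T_{y_0}h) = h(y_1)
\end{equation*}
for every $h \in C_r(Y)$ and $s \in S$. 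Since $C_r(Y)$ separates the points of the compact Hausdorff space $Y$, this forces $\lambda_s y_1 = y_1$ for all $s$, so $y_1$ is the required common fixed point.

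For (ii)$\Rightarrow$(i), let $Y$ be the spectrum of $RUC(S)$ (the set of nonzero multiplicative linear functionals) equipped with the weak$^*$ topology; it is compact Hausdorff and nonempty since all point-evaluations $\delta_s$ lie in it. Define $\lambda_s: Y \to Y$ by $(\lambda_s\mu)(f) = \mu(l_s f)$. Using $l_t l_s = l_{st}$, which comes straight from the definition $l_s f(s') = f(ss')$, one checks that each $\lambda_s$ is a weak$^*$-continuous self-map of $Y$ and that $\lambda_{st} = \lambda_s \lambda_t$. Joint continuity of $(s,\mu) \mapsto \lambda_s\mu$ follows from the split
\begin{equation*}
    \bigl|\mu_\alpha(l_{s_\alpha}f) - \mu(l_s f)\bigr| \le \|l_{s_\alpha}f - l_s f\|_\infty + \bigl|\mu_\alpha(l_s f) - \mu(l_s f)\bigr|,
\end{equation*}
whose first term vanishes by membership of $f$ in $RUC(S)$ and whose second vanishes by weak$^*$-convergence of $\mu_\alpha$ to $\mu$. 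Applying (ii) produces $\mu^* \in Y$ with $\mu^*(l_s f) = \mu^*(f)$ for all $s, f$; being already multiplicative, $\mu^*$ is the sought multiplicative left invariant mean.

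The main obstacle I foresee is the regularity step $T_{y_0}(C_r(Y)) \subseteq RUC(S)$ in direction (i)$\Rightarrow$(ii): joint continuity only delivers pointwise continuity of $s \mapsto h(\lambda_s y_0)$, whereas $RUC$-membership demands uniform control in $t$ of the difference $h(\lambda_{st}y_0) - h(\lambda_s y_0)$. This is precisely the classical Mitchell--Namioka argument, which exploits compactness of $\overline{\lambda_S y_0} \subseteq Y$ together with the uniform continuity of $h$ on this compact set. Once this regularity is secured, everything else in the argument is purely formal: the Gelfand identification $C_r(Y)^{\text{mult}} \cong Y$ on one side, and the one-line invariance computation involving $l_s$ on the other.
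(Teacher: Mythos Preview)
The paper does not actually prove this statement: Theorem~\ref{mit} is quoted verbatim from Mitchell \cite[Theorem 1]{mit} with the preamble ``We recall the following result,'' and no proof is supplied. The closest the paper comes is the proof of Corollary~\ref{fpt}, which is the \emph{affine} analogue (means rather than multiplicative means, compact convex sets rather than compact Hausdorff spaces) obtained by restricting to the subsemigroup $P(\varphi)$.

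Your argument is correct and is precisely the classical Mitchell proof. It is worth noting how it parallels and differs from the paper's proof of Corollary~\ref{fpt}. In direction (i)$\Rightarrow$(ii) both arguments push a mean on $RUC(S)$ through the operator $T_{y_0}$ to obtain a functional on function space over $Y$, and then identify that functional with a point of $Y$; you do this via Gelfand duality (multiplicative functionals on $C_r(Y)$ are point evaluations), whereas the paper uses the barycentre map (means on $A(Y)$ for $Y$ compact convex are point evaluations). In direction (ii)$\Rightarrow$(i) you act on the spectrum of $RUC(S)$, which is compact Hausdorff but carries no linear structure, while the paper acts on the convex set of means in $RUC(P(\varphi))^*$; each choice is forced by the flavour of fixed-point hypothesis available. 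The regularity step $T_{y_0}(C_r(Y))\subseteq RUC(S)$ that you flag as the main obstacle is exactly what the paper spells out (by contradiction, using compactness of $Y$) in the proof of Corollary~\ref{fpt}, so your instinct about where the real work lies is accurate.
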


Let $S$ be a topological semigroup, $\varphi\in\Delta_S(S)$ and $P(\varphi)$ be as defined in the previous section. Clearly, $P(\varphi)$ is a topological subsemigroup of $S$ such $\varphi|_{P(\varphi)}=1$. Then by replacing $S$ by $P(\varphi)$ in Theorem \ref{mit}, we have the following result that we give its proof because we use some of the obtained results in the proof, for the last result of this paper:
\begin{cor}\label{fpt}
Let $S$ be a topological semigroup and $\varphi\in\Delta_S(S)$. Then the following assertions equivalent:
\begin{itemize}
  \item[(i)] There is a bounded linear functional $m\in RUC(P(\varphi))^*$ such that $m(\varphi)=1$ and $m(f\cdot s)=m(f)$, for all $f\in RUC(P(\varphi))$ and $s\in P(\varphi)$.
  \item[(ii)] $Y$ contains a common fixed point of $P(\varphi)$ for continuous affine actions of $P(\varphi)$ on compact convex sets of a locally convex linear topological space.
\end{itemize}
\end{cor}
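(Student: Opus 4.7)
The plan is to reduce the statement to Day's classical fixed-point theorem for left amenable topological semigroups, applied to the subsemigroup $P(\varphi)$. The crucial simplification is that $\varphi|_{P(\varphi)}\equiv 1_{P(\varphi)}$, so condition (i) says precisely that $P(\varphi)$ admits a bounded left invariant linear functional $m$ on $RUC(P(\varphi))$ with $m(1_{P(\varphi)})=1$; i.e., the condition collapses to ordinary topological left amenability of $P(\varphi)$.

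For $(\mathrm{i})\Longrightarrow(\mathrm{ii})$, I apply Theorem~\ref{fs1} to $P(\varphi)$ and $1_{P(\varphi)}$ to obtain a bounded net of finite means $(f_\alpha)\subseteq\ell^1(P(\varphi))$ with $\|s\cdot f_\alpha-f_\alpha\|_1\longrightarrow 0$ for every $s\in P(\varphi)$. Given a jointly continuous affine action $\lambda$ of $P(\varphi)$ on a compact convex subset $K$ of a locally convex space $L$, fix $y_0\in K$ and form the averaged points $y_\alpha=\sum_{s\in P(\varphi)}f_\alpha(s)\lambda_s(y_0)\in K$. Compactness of $K$ supplies a subnet converging to some $y^*\in K$. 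For any $\phi\in L^*$, the scalar function $h_\phi(s)=\phi(\lambda_s y_0)$ is bounded continuous on $P(\varphi)$ and in fact lies in $RUC(P(\varphi))$ by joint continuity of the action and compactness of $K$. For any $t\in P(\varphi)$ one then computes
\begin{equation*}
\phi(\lambda_t y_\alpha-y_\alpha)=\langle\, t\cdot f_\alpha-f_\alpha,\,h_\phi\,\rangle,
\end{equation*}
whose modulus is bounded by $\|t\cdot f_\alpha-f_\alpha\|_1\,\|h_\phi\|_\infty$ and hence tends to $0$. Passing to the limit along the convergent subnet and using continuity of $\lambda_t$ gives $\phi(\lambda_t y^*-y^*)=0$. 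Since $\phi\in L^*$ is arbitrary and $L^*$ separates points, $\lambda_t y^*=y^*$ for every $t\in P(\varphi)$.

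For $(\mathrm{ii})\Longrightarrow(\mathrm{i})$, consider the set $M$ of means on $RUC(P(\varphi))$, a weak-$*$ compact convex subset of $RUC(P(\varphi))^*$ by Banach--Alaoglu. The dual of the right-translation operator $f\mapsto f\cdot s$ induces an affine action $\mu\mapsto s\cdot\mu$ of $P(\varphi)$ on $M$ defined by $(s\cdot\mu)(f)=\mu(f\cdot s)$, and the defining property of $RUC(P(\varphi))$ renders this action jointly continuous (with $M$ carrying the weak-$*$ topology). Hypothesis (ii) then yields a common fixed mean $m\in M$. Since $\varphi|_{P(\varphi)}=1_{P(\varphi)}\in RUC(P(\varphi))$ and $m(1_{P(\varphi)})=1$, the fixed-point identity $m(f\cdot s)=m(f)$ is exactly condition (i).

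The principal technical difficulty is the joint continuity bookkeeping: verifying that the dualised action on $M$ is jointly continuous in $(\mathrm{ii})\Longrightarrow(\mathrm{i})$, and that $h_\phi\in RUC(P(\varphi))$ in $(\mathrm{i})\Longrightarrow(\mathrm{ii})$. Both rest on the defining right-uniform-continuity condition together with the compactness of the target set and the assumed joint continuity of the given action; granted these, the remainder is a routine transcription of the classical averaging-and-compactness proof of Day's theorem.
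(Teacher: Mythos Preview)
Your proof is correct. The implication $(\mathrm{ii})\Rightarrow(\mathrm{i})$ coincides with the paper's argument: both apply the assumed fixed-point property to the weak-$*$ compact convex set of means on $RUC(P(\varphi))$ under the dual translation action, and identify the resulting fixed mean with the functional required in (i).

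For $(\mathrm{i})\Rightarrow(\mathrm{ii})$, however, you take a genuinely different route from the paper. The paper works directly with the mean $m$: it defines $T_y:A(Y)\to RUC(P(\varphi))$ by $(T_yh)(s)=h(ys)$, proves (using compactness of $Y$ and joint continuity) that the range really lies in $RUC(P(\varphi))$, and then realises $T_y^{*}m\in A(Y)^*$ as evaluation at some $y'\in Y$, which is shown to be fixed by a direct computation (this computation, equation \eqref{fpte2}, is reused later in Theorem~\ref{hbt}). You instead pass through Theorem~\ref{fs1} to obtain an asymptotically invariant net of finite means $(f_\alpha)$ in $\ell^1(P(\varphi))$, form the averaged orbit points $y_\alpha=\sum_s f_\alpha(s)\lambda_s y_0\in K$, and extract a fixed point from a convergent subnet. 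Two remarks: first, Theorem~\ref{fs1} as stated gives only a bounded net in $\ell^1$, not finite means; you need the finite-mean version (Day's original formulation, quoted just before Theorem~\ref{fs1}) so that $y_\alpha\in K$ --- this follows once you observe via Theorem~\ref{tt2} that (i) is literally left amenability of $P(\varphi)$. Second, your claim that $h_\phi\in RUC(P(\varphi))$ is true but superfluous in your argument, since the estimate $|\langle t\cdot f_\alpha-f_\alpha,h_\phi\rangle|\le\|t\cdot f_\alpha-f_\alpha\|_1\|h_\phi\|_\infty$ needs only $h_\phi\in\ell^\infty$. Your approach is slightly more elementary and avoids the barycentre step implicit in the paper's ``there exists $y'\in Y$ such that $T_y^*m(h)=h(y')$''; the paper's approach has the advantage of producing the identity \eqref{fpte2}, which it recycles in the proof of Theorem~\ref{hbt}.
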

\begin{proof}
(i)$\Longrightarrow$(ii) Let $Y$ be compact convex set of locally convex linear topological space $X$. For each $y\in Y$, we define $T_y:C_r(Y)\longrightarrow C_r(P(\varphi))$ and $R_y:S\longrightarrow Y$ by $(T_yh)(s)=h(ys)$ and $R_y(s)=ys$, for all $h\in C_r(Y)$ and $s\in P(\varphi)$. Then
$$(T_yh)(s)=h(ys)=(hR_y)(s),$$
for all $h\in C_r(Y)$ and $s\in P(\varphi)$. Thus, $T_yh\in C(S)$, because  $R_y(s)$  is continuous. Now, set $T_yh=f$, then
$$r_sf(t)=f(ts)=(T_yh)(ts)=h(yts),$$
for all $s,t\in P(\varphi)$. We claim that $f\in RUC(P(\varphi))$. Assume towards a contradiction that $f\notin RUC(S)$ i.e., there exist $s\in P(\varphi)$ and a net $(s_\alpha)_{\alpha\in I}\subseteq P(\varphi)$ such that $s_\alpha\longrightarrow s$ but $r_{s_\alpha}f$ does not convergent uniformly to $r_sf$. This means that there is a positive number $\beta$ and a net $(t_\alpha)_{\alpha\in I}\subseteq P(\varphi)$ such that
$$|h(yt_\alpha s_\alpha)-h(y(t_\alpha s))|\geq\beta,$$
for all $\alpha\in I$. Denote $yt_\alpha$ by $y_\alpha$. Since $Y$ is compact,  $(y_\alpha)_{\alpha\in I}$ has a subnet such as $(y_\gamma)_{\gamma\in I}$ such that converges to some $y'\in Y$. By continuity of $h$ and joint continuity of action of $P(\varphi)$ on $Y$, we have
$$0<\beta\leq\lim_\gamma|h(y_\gamma s_\gamma)-h(y_\gamma s)|=|h(y' s)-h(y' s)|=0.$$

This is a contradiction and so, $f\in RUC(P(\varphi))$. This leads to that $T_yC(Y)\subseteq RUC(P(\varphi))$ and consequently, $T_yA(Y)\subseteq RUC(P(\varphi))$. Let $m\in RUC(S)^*$ such that $m(\varphi)=1$ and $m(f\cdot s)=\varphi(s)m(f)$, for all $f\in RUC(S)$ and $s\in S$. Now; without loss of generality, consider $T_y:A(Y)\longrightarrow RUC(P(\varphi))$. Then the adjoint of $T_y$ is $T_y^*:RUC(P(\varphi))^*\longrightarrow A(Y)^*$. Thus, there exists $y'\in Y$ such that $T_y^*m(h)=h(y')$, for every $h\in A(Y)$.

For $s\in P(\varphi)$, define $\Gamma_s:A(Y)\longrightarrow A(Y)$ by $\Gamma_sh(y)=h(sy)$, for every $h\in A(Y)$. Then
\begin{eqnarray}\label{fpte1}
\nonumber
  (T_y\Gamma_sh)(t)&=& h(yst)=T_yh(st) \\
   &=&  (T_yh\cdot s)(t),
\end{eqnarray}
for all  $h\in A(Y)$ and $t\in P(\varphi)$. Moreover, by \eqref{fpte1} we have
\begin{eqnarray}\label{fpte2}
\nonumber
  h(sy') &=& \Gamma_sh(y')=\Gamma_s(T_y^*m(h))=m(T_y(\Gamma_sh))=m(T_yh\cdot s) \\
  \nonumber
   &=&  \varphi(s)m(T_yh)=m(T_yh)=T_y^*m(h)\\
   &=&h(y'),
\end{eqnarray}
for all  $h\in A(Y)$ and $s\in P(\varphi)$. This shows that $sy'=y'$.

(ii)$\Longrightarrow$(i) Set $X=RUC(P(\varphi))^*$ with $w^*$-topology and suppose that $Y$ is the compact convex set of all bounded linear functionals on $RUC(P(\varphi))$ such that, for each $m\in Y$,  $m(\varphi)=1$. Define the affine action $T:P(\varphi)\times Y\longrightarrow Y$ by $T_s\mu=l_s^*\mu$, for all $s\in P(\varphi)$ and $\mu\in Y$, where $l_s^*$ is the adjoint of $l_s$. It is easy to check that $T$ is a uniformly continuous affine action on $Y$. Thus, (ii) implies that there exists  $m\in Y$ that is fixed under the affine action $T$  of $P(\varphi)$ such that $l_s^*m=s\cdot m=m$. This means that $s\cdot m(f)=m(f\cdot s)=m(f)$. Thus, (ii) implies (i).
\end{proof}
\begin{remark}\label{rem1}
Note that if a topological semigroup $S$ is $\varphi$-amenable, where $\varphi\in\Delta_S(S)$, then the condition $\mathrm{(ii)}$ holds by the same reasons in the proof of  the above Corollary, but, we do not know the converse part holds or not?
\end{remark}
Similar to Corollary \ref{fpt}, one can consider the conditions weakly right uniformly continuous functions on $S$ instead of right uniformly continuous functions, we do not consider this case in this paper.

Suppose that $E$ is a separated locally convex space and $X$ is a subset of $E$ containing an $n$-dimensional subspace. Let $S$ be a topological semigroup and $\mathcal{T}=\{T_s:s\in S\}$ be a representation of  $S$ as continuous linear transformations from $E$ into $E$ such that $T_s(L)$ is an $n$-dimensional subspace contained in $X$ whenever $L$ is an $n$-dimensional subspace contained in $X$, and there exists a closed $\mathcal{T}$-invariant subspace $H$ in $E$ of codimension $n$ with the property that $x + H \cap X$ is compact and convex, for each $x \in E$. If $S$ is left amenable, then exists an $n$-dimensional subspace $L_0$, contained in $X$ such that $T_s(L_0) = L_0$, for all $s\in S$ \cite{fan}. Lau generalized the above result and introduced the $\mathcal{P}(n)$ condition for topological semigroups and proved that the condition $\mathcal{P}(1)$ implies the left amenability of topological semigroups \cite{lau2}. Let $X\subseteq E$, following  \cite{lau2} by $\mathcal{L}_n(X)$ we mean  all $n$-dimensional subspaces of $E$ contained in $X$. We now write $\mathcal{P}(n)$ condition as follows:

$\mathcal{P}(n)$: Let $S$ be a topological  semigroup and $\mathcal{T}=\{T_s:s\in P(\varphi)\}$ be a representation of  $P(\varphi)$ as linear operators from $E$ into $E$ jointly continuous on compact convex subsets of $E$. Let $X$ be a subset of $E$ such that there is a
closed $\mathcal{T}$-invariant subspace $H$ in $E$ of codimension $n$ with the property that $x + H \cap X$ is compact and convex, for each $x \in E$. If $\mathcal{L}_n(X)$ is non-empty and $\mathcal{T}$-invariant, then there exists $L_0\in \mathcal{L}_n(X)$ such that $T_s(L_0) = L_0$, for each $s\in P(\varphi)$.

Similar to proof of  \cite[Theorem 1]{lau2} and by Corollary \ref{fpt}, we have the following result:
\begin{cor}
Let $S$ be a topological semigroup and $\varphi\in\Delta_S(S)$.
\begin{itemize}
  \item[(i)] If $S$ is left $\varphi$-amenable, then $S$ satisfies $\mathcal{P}(n)$, for each positive integer $n$.
  \item[(ii)] If $S$ satisfies $\mathcal{P}(1)$, then $S$ is left $\varphi$-amenable.
\end{itemize}
\end{cor}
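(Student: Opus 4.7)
The plan is to adapt Lau's proof of Theorem 1 in \cite{lau2} to the $\varphi$-amenable setting, using Corollary \ref{fpt} to substitute for the classical Day--Mitchell fixed point theorem. The key observation that makes the translation go through is that $\varphi|_{P(\varphi)}\equiv 1$, so properties of $P(\varphi)$ translate freely between $\varphi$-twisted invariance and ordinary invariance.

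For (i), we begin from left $\varphi$-amenability of $S$. By Remark \ref{rem1}, this gives the common fixed point property for $P(\varphi)$ on compact convex subsets of locally convex linear topological spaces under continuous affine actions. Given a representation $\mathcal{T}=\{T_s: s\in P(\varphi)\}$ satisfying the hypotheses of $\mathcal{P}(n)$, fix a closed complement $L'$ of $H$ in $E$, realize each $L\in\mathcal{L}_n(X)$ as the graph of a continuous linear map $\Phi_L: L'\to H$, and use the hypothesis that $(x+H)\cap X$ is compact and convex for every $x\in E$ to show that the set of admissible $\Phi$ is compact and convex in an appropriate product topology. The $P(\varphi)$-action descends to a jointly continuous affine action on this parameter space (jointly continuous on compact convex subsets, by the hypothesis on $\mathcal{T}$), and applying the common fixed point property yields a fixed $\Phi_0$ whose graph is the desired $T_s$-invariant $L_0\in\mathcal{L}_n(X)$.

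For (ii), we apply $\mathcal{P}(1)$ directly to a specifically tailored representation in order to recover the functional characterizing left $\varphi$-amenability in Theorem \ref{t1}. Take $E=RUC(S)^*$ with the $w^*$-topology and define $T_s\mu$ by $(T_s\mu)(f)=\mu(f\cdot s)$ for $s\in P(\varphi)$; this is $w^*$-continuous in $\mu$ and jointly continuous on $w^*$-compact subsets. Let $H=\{\mu\in E: \mu(\varphi)=0\}$, which has codimension one and is $\mathcal{T}$-invariant since $T_s\mu(\varphi)=\mu(\varphi\cdot s)=\varphi(s)\mu(\varphi)=\mu(\varphi)$ for $s\in P(\varphi)$. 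Choose $X$ to be a closed cone with $(x+H)\cap X$ compact and convex for every $x\in E$, arranged so that $\mathcal{L}_1(X)$ is non-empty and $\mathcal{T}$-invariant (typically a cone of nonnegative scalar multiples of a suitable convex set of bounded functionals with $\mu(\varphi)\geq 0$). Applying $\mathcal{P}(1)$ produces a fixed $L_0=\mathbb{R}m_0\in\mathcal{L}_1(X)$. Eigenvalue analysis on $\varphi$ yields $T_s m_0=\lambda_s m_0$ with $\lambda_s=1$ for $s\in P(\varphi)$ (once $m_0(\varphi)\neq 0$ is ensured by the construction of $X$), so $m_0(f\cdot s)=m_0(f)$ for $s\in P(\varphi)$ and, after normalization, $m_0(\varphi)=1$.

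The main obstacle is bridging the $P(\varphi)$-invariance of $m_0$ to the full character-equivariance $m_0(f\cdot s)=\varphi(s)m_0(f)$ for all $s\in S$ required by Theorem \ref{t1}---the same gap flagged in Remark \ref{rem1}. The way to close it is to design the cone $X$ and the ambient action so that the full $S$-action (not merely the restricted $P(\varphi)$-action) preserves $X$; this is natural because $|\varphi(s)|=1$ makes multiplication by $\varphi(s)$ an isometry, so the scaled operators $\overline{\varphi(s)}T_s$ leave $X$ invariant for every $s\in S$. This forces $T_s m_0\in L_0$ for every $s\in S$, and the eigenvalue computation at $\varphi$ then gives $T_s m_0=\varphi(s)m_0$ automatically. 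Setting up this cone-and-action architecture correctly, mirroring Lau's arrangement in \cite{lau2} with the character twist carefully tracked, is the principal technical step.
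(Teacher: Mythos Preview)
Your plan for part~(i) matches the paper exactly: the paper's proof is the single sentence ``similar to the proof of \cite[Theorem~1]{lau2} and by Corollary~\ref{fpt}'', and your use of Remark~\ref{rem1} to pass from left $\varphi$-amenability of $S$ to the fixed point property for $P(\varphi)$, followed by Lau's graph-parametrization argument, is precisely what that sentence abbreviates.

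For part~(ii), however, there is a genuine gap, and the scaled-operator device you propose does not close it. The property $\mathcal{P}(1)$, as formulated in the paper, is a statement about representations of the subsemigroup $P(\varphi)$ only; applying it yields a line $L_0=\mathbb{R}m_0$ invariant under $\{T_s:s\in P(\varphi)\}$ and nothing more. Arranging the cone $X$ to be invariant under the larger family $\{\overline{\varphi(s)}\,T_s:s\in S\}$ does not force $L_0$ to be invariant under that larger family: $\mathcal{P}(1)$ promises a $P(\varphi)$-fixed line inside $X$, not an $S$-fixed one, so the inference ``this forces $T_s m_0\in L_0$ for every $s\in S$'' is unjustified. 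To see the obstruction sharply, take $S=\mathbb{Z}$ and $\varphi(n)=e^{2\pi i n\theta}$ with $\theta$ irrational; then $P(\varphi)=\{0\}$, so $\mathcal{P}(1)$ is vacuously true and every one-dimensional subspace of $X$ is trivially $P(\varphi)$-fixed, yet your argument would then have to manufacture, from no input whatsoever, a functional $m_0\in RUC(\mathbb{Z})^*$ satisfying $m_0(f\cdot n)=e^{2\pi i n\theta}m_0(f)$ for every $n\in\mathbb{Z}$. The paper's one-line proof invokes only Corollary~\ref{fpt}, and Lau's argument combined with that corollary delivers at most condition~(i) of Corollary~\ref{fpt}, namely a $P(\varphi)$-invariant functional on $RUC(P(\varphi))$; the passage from there to left $\varphi$-amenability of $S$ in the sense of Theorem~\ref{t1} is exactly the open point recorded in Remark~\ref{rem1}. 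You have correctly located the obstacle, but the proposed bridge does not span it.
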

Let $S$ be a topological  semigroup and $\ell:S\times RUC(S)^*\longrightarrow RUC(S)^*$ be the left action of $S$ on $RUC(S)^*$ defined by $(s,m)\longmapsto \ell_sm$, for all $m\in RUC(S)^*$ and $s\in S$  such that $\ell_sI=I$ and $\ell_s\ell_{s'}=\ell_{s's}$, for all $s,s'\in P(\varphi)$, where $I$ is the identity element of $RUC(S)^*$. Thus, $\ell$ on $P(\varphi)$ is an antirepresentation. Suppose that $X\subseteq RUC(S)^*$ is a subspace that contains $I$ and all elements of $RUC(S)^*$ such as $m$ such that $m(\varphi)=1$. A mean $\mu$ on $X$ is called $P(\varphi)$-invariant under $\ell$ if $\mu\ell_s=\mu$, for every $s\in P(\varphi)$ and we say that $\mu$ on $X$ is $S_\varphi$-invariant under $\ell$ if $\mu\ell_s=\varphi(s)\mu$, for every $s\in S$. Clearly, every $S_\varphi$-invariant mean under $\ell$ is $P(\varphi)$-invariant.
\begin{defn}
Let $S$ be a topological  semigroup. We say that $S$ has the Hahn-Banach Theorem Property if, for each continuous left action  $\ell:S\times RUC(S)^*\longrightarrow RUC(S)^*$ and every  $P(\varphi)$-invariant subspace $X$ of $RUC(S)^*$ that contains $I$ and all elements of $RUC(S)^*$ such as $m$ such that $m(\varphi)=1$, every $P(\varphi)$-invariant mean $\mu$ on $X$ can be extended to a $S_\varphi$-invariant mean $\widetilde{\mu}$ on $RUC(S)^*$.
\end{defn}

The Hahn-Banach Property of semigroups is studied by Silverman \cite{sil} and for a special semigroups namely adjoint
 semigroups studied by van Neerven \cite{van}. By the following, we characterize left $\varphi$-amenability of $S$:
\begin{thm}\label{hbt}
Let $S$ be a topological  semigroup and $\varphi\in\Delta_S(S)$, the following assertions are equivalent:
\begin{itemize}
  \item[(i)] $S$ is left $\varphi$-amenable.
  \item[(ii)] $S$ has the Hahn-Banach Theorem Property.
    \item[(iii)] for any Banach $S$-submodule $Y$ of $X$, each linear functional in $\bigcap_{s\in S}\{y^*\in Y^*:~ s\cdot y^*=\varphi(s)y^*\}$ has an extension to a linear functional in $\bigcap_{s\in S}\{x^*\in X^*:~ s\cdot x^* =\varphi(s)x^*\}$;
  \item[(iv)] there is a bounded projection from $X^*$ onto $\bigcap_{s\in S}\{x^*\in X^*:~ s\cdot x^*=\varphi(s)x^*\}$ which commutes with any bounded linear operator from $X^*$ into $X^*$ commuting with the action of $S$ on $X$.
\end{itemize}
\end{thm}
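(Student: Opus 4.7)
The plan is to prove the cycle (i) $\Rightarrow$ (iii) $\Rightarrow$ (iv) $\Rightarrow$ (i) together with the equivalence (i) $\Leftrightarrow$ (ii), using the averaging functional $m$ supplied by Theorem \ref{t1} as the central tool throughout.

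For (i) $\Rightarrow$ (iii), given $y^{*}$ in the intersection over $s \in S$, I would first extend it classically by Hahn--Banach to some $z^{*} \in X^{*}$, then define the $\varphi$-averaged extension $\tilde{x}^{*}(x) = m(s \mapsto z^{*}(x \cdot s))$. The integrand lies in $RUC(S)$ by the argument of (\ref{e3}), and the computation parallel to (\ref{e5}) gives $s \cdot \tilde{x}^{*} = \varphi(s)\tilde{x}^{*}$. For $y \in Y$, the equivariance of $y^{*}$ collapses the integrand to the scalar multiple $\varphi(s) y^{*}(y)$ of $\varphi$, whence $\tilde{x}^{*}(y) = y^{*}(y)\, m(\varphi) = y^{*}(y)$, so $\tilde{x}^{*}$ extends $y^{*}$ as required.

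For (i) $\Rightarrow$ (iv), I would define $P: X^{*} \to X^{*}$ by the same formula $(Px^{*})(x) = m(s \mapsto x^{*}(x \cdot s))$. The previous argument shows $P(X^{*}) \subseteq \mathrm{Fix} := \bigcap_{s\in S}\{x^{*}: s \cdot x^{*} = \varphi(s) x^{*}\}$, and if $x^{*} \in \mathrm{Fix}$ then the integrand collapses to $\varphi(s) x^{*}(x)$, so $Px^{*} = x^{*} m(\varphi) = x^{*}$; hence $P$ is a bounded projection onto $\mathrm{Fix}$. For the commuting condition, the adjoint of any $S$-equivariant $T_0: X \to X$ slides through the averaging integral, yielding $PT_0^{*} = T_0^{*} P$.

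For (iii) $\Rightarrow$ (i), I would specialize to $X = RUC(S)$ (as right $S$-module via $(f \cdot s)(t) = f(ts)$) and $Y = \mathbb{C}\varphi$. The functional $c\varphi \mapsto c$ lies in $\mathrm{Fix}(Y^{*})$ by (\ref{e1}), and (iii) extends it to some $\tilde{m} \in RUC(S)^{*}$ with $\tilde{m}(\varphi) = 1$ and $\tilde{m}(f \cdot s) = \varphi(s) \tilde{m}(f)$, which is precisely Theorem \ref{t1}(ii). The remaining implication (iv) $\Rightarrow$ (i) would be obtained similarly: apply the projection $P$ from (iv) to any $x_0^{*} \in RUC(S)^{*}$ with $x_0^{*}(\varphi) = 1$; then $Px_0^{*} \in \mathrm{Fix}$, and the commuting condition with a suitable $S$-equivariant operator on $RUC(S)$ would force $(Px_0^{*})(\varphi) = 1$, yielding the required $\varphi$-mean. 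The equivalence (i) $\Leftrightarrow$ (ii) is handled in the same spirit: (i) supplies, via $m$, an $S_\varphi$-invariant extension of every $P(\varphi)$-invariant mean on a subspace, while (ii) applied to the subspace containing $I$ together with a fixed functional evaluating $\varphi$ to $1$ produces the $\varphi$-mean of Theorem \ref{t1}. The main obstacle I anticipate is the step (iv) $\Rightarrow$ (i): unlike the explicit averaging projection constructed in (i) $\Rightarrow$ (iv), an abstract bounded projection onto $\mathrm{Fix}$ need not a priori preserve values at $\varphi$, and the commuting property must be invoked through a concrete $S$-equivariant operator to ensure that $Px_0^{*}$ detects $\varphi$ nontrivially.
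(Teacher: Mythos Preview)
Your approach is genuinely different from the paper's and, for most of the implications, considerably more direct. The paper never uses the averaging functional $m$ explicitly: for (i)$\Rightarrow$(iii) it takes a Hahn--Banach extension $\widetilde{\theta}$ of $y^{*}$, observes that $s\mapsto s\cdot\widetilde{\theta}-\varphi(s)\widetilde{\theta}$ is a bounded derivation into $(X/Y)^{*}\cong Y^{\perp}$, and invokes the \emph{definition} of left $\varphi$-amenability to make it principal; for (iii)$\Rightarrow$(iv) it builds the projection via the tensor module $X^{*}\widehat{\otimes}X$ and the subspaces $H,K$; and for (i)$\Leftrightarrow$(ii) it passes through the fixed-point Corollary~\ref{fpt}. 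Your averaging recipe $(Px^{*})(x)=m\bigl(s\mapsto x^{*}(x\cdot s)\bigr)$ collapses the first two of these into one-line arguments, and your (iii)$\Rightarrow$(i) via $Y=\mathbb{C}\varphi\subset RUC(S)$ is cleaner than anything in the paper. What the paper's route buys is that (i)$\Rightarrow$(iii) needs no regularity of the map $s\mapsto x\cdot s$ beyond what the derivation condition already encodes, whereas your averaging argument implicitly needs $s\mapsto z^{*}(x\cdot s)$ to land in $RUC(S)$.

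Your anticipated obstacle at (iv)$\Rightarrow$(i) is real, and your sketch does not close it. Taking $X=RUC(S)$ and applying an abstract projection $P$ to some $x_{0}^{*}$ with $x_{0}^{*}(\varphi)=1$, there is no obvious $S$-equivariant operator on $RUC(S)^{*}$ whose commutation with $P$ forces $(Px_{0}^{*})(\varphi)\neq 0$; the natural candidates (multiplication by $\varphi$, rank-one maps onto $\mathbb{C}\varphi$) either fail to be equivariant or presuppose the $\varphi$-mean you are trying to construct. The paper sidesteps this by working not in $RUC(S)$ but in $X=RUC(S)^{*}\widehat{\otimes}RUC(S)^{*}$: there the flip $\tau$ and the family $\{l_{s},r_{s}\}$ furnish enough commuting operators that the element $\pi^{**}\tau^{*}P^{*}(I\otimes I)$, restricted to $RUC(S)$, can be shown to satisfy both $s\cdot m=\varphi(s)m$ and $m(\varphi)=1$ simultaneously (equations~\eqref{fe1}--\eqref{fe}). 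Your treatment of (i)$\Leftrightarrow$(ii) is likewise too thin: the Hahn--Banach Property concerns $P(\varphi)$-invariant means on subspaces of $RUC(S)^{*}$ under an \emph{arbitrary} continuous left action $\ell$, and averaging with $m\in RUC(S)^{*}$ does not directly produce extensions at that level; the paper handles (i)$\Rightarrow$(ii) by realising the set of extensions as a compact convex set and invoking the fixed-point result of Corollary~\ref{fpt}.
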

\begin{proof}
(i)$\Longrightarrow$(ii) Consider  $RUC(S)^*$ with $w^*$-topology and suppose that $\mu$ is a $P(\varphi)$-invariant mean on a $P(\varphi)$-invariant subspace $X$ of $RUC(S)^*$. Let $Y$ be a subspace of  $RUC(S)^*$, contains all means on  $RUC(S)^*$ that they are extensions of $\mu$. The Hahn-Banach Theorem follows that $Y$ is non-empty and $w^*$-compactness and convexity of the set of all means on  $RUC(S)^*$ implies that it is  $w^*$-compact and convex.

Define the continuous affine action $T:P(\varphi)\times RUC(S)^{**}\longrightarrow RUC(S)^{**}$ by $T_s\nu=\ell_s^*\nu$, for all $\nu\in RUC(S)^{**}$ and $s\in P(\varphi)$, where $\ell_s^*$ is the adjoint of $\ell_s$. Continuity of $\ell_s$ implies that the maps $s\longmapsto T_s\nu$ and $\mu\longmapsto T_s\nu$  are continuous, for all $\nu\in  RUC(S)^*$ and $s\in P(\varphi)$.

For all $\widetilde{\mu}\in Y$ and $s\in P(\varphi)$,
$$\ell_s^*\widetilde{\mu}(I)=\widetilde{\mu}(\ell_sI)={\mu}(I)=1.$$

Hence, $\widetilde{\mu}$ is a mean on $RUC(S)^*$. Moreover,
\begin{eqnarray*}
% \nonumber to remove numbering (before each equation)
  \ell_s^*\widetilde{\mu}(m) &=&\widetilde{\mu}\ell_s(m)= {\mu}\ell_s(m) \\
   &=& \mu(m),
\end{eqnarray*}
for all $m\in X$ and $s\in P(\varphi)$. Thus, $T_s(Y)\subseteq Y$, for every $s\in P(\varphi)$. Furthermore,
\begin{eqnarray*}
% \nonumber to remove numbering (before each equation)
  T_sT_{s'}\nu(m) &=& T_s\ell_{s'}^*\nu(m) =\ell_s^*\ell_{s'}^*\nu(m)=\ell_{s'}^*\nu(\ell_sm)\\
   &=& \nu(\ell_{s'}\ell_sm)=\nu(\ell_{ss'}m)= \ell_{ss'}^*\nu(m)\\
   &=&  T_{ss'}\nu(m),
\end{eqnarray*}
for all $\nu\in RUC(S)^{**}$, $m\in RUC(S)^*$ and $s,s'\in P(\varphi)$. This means that $T_sT_{s'}=T_{ss'}$, for all $s,s'\in P(\varphi)$, i.e., $T_s$ is a representation on $RUC(S)^{**}$. Since $S$ is left $\varphi$-amenable, Corollary \ref{fpt} together with Remark \ref{rem1} implies that there exists $\widetilde{\mu}\in Y$ such that $T_s\widetilde{\mu}=\widetilde{\mu}$. Note that if in the definition of $T_y$ in the proof of Corollary \ref{fpt}, we replace $P(\varphi)$ by $S$, the relation \eqref{fpte2} becomes $h(sy')=\varphi(s)h(y')$, for every $s\in S$. Thus, $sy'=\varphi(s)y'$, for every $s\in S$.

(ii)$\Longrightarrow$(i) Again, consider $RUC(S)^*$ with the $w^*$-topology and suppose that  the left action $\ell:S\times RUC(S)^*\longrightarrow RUC(S)^*$ is defined by $\ell_sm=\varphi(s)m$, for all $m\in RUC(S)^*$ and $s\in S$. Clearly, the map $m\longmapsto\ell_sm$ is continuous. Set
$$X=\{m\in RUC(S)^*:m(\varphi)=1\}.$$

Note that by Hahn-Banach Theorem the set $\{m\in RUC(S)^*:m(\varphi)=1\}$ is non-empty. Therefore, there exists at least one $\textbf{m}\in X$ such that $\textbf{m}(\varphi)=1$. Set $Y=\mathbb{C}\textbf{m}$. Clearly, $Y$ is $P(\varphi)$-invariant. Define $\mu$ on $Y$ by $\mu(\lambda \textbf{m})=\lambda$, for every  $\lambda\in \mathbb{C}$. Then
$$\mu\ell_s(\textbf{m})=\mu(\varphi(s)\textbf{m})=\mu(\textbf{m}),$$
for all $s\in P(\varphi)$. This means that $\mu$ is $P(\varphi)$-invariant under $\ell$ and (ii) implies that any extension $\widetilde{\mu}$ is $S_\varphi$-invariant under $\ell$ i.e., $\widetilde{\mu}\ell_s=\varphi(s)\widetilde{\mu}$, for every $s\in S$. Then
$$\widetilde{\mu}\ell_s(\textbf{m})=\widetilde{\mu}(s\cdot \textbf{m})=\widetilde{\mu}(\varphi(s)\textbf{m}),$$
for every $s\in S$. Hence, $s\cdot \textbf{m}=\varphi(s)\textbf{m}$, for every $s\in S$, because  $\widetilde{\mu}$ separates points of $RUC(S)^*$. This implies that
$$\textbf{m}(f\cdot s)=s\cdot \textbf{m}(f)=\varphi(s)\textbf{m}(f),$$
for every $f\in RUC(S)$ and $s\in S$. Thus, $S$ is left $\varphi$-amenable.

(i)$\Longrightarrow$ (iii) Assume that $Y$ is a closed Banach submodule of $X\in\ _\varphi\mathcal{M}^S$, then the quotient Banach space $X/Y$ is a Banach $S$-bimodule.
 Set $$\mathcal{K}=\bigcap_{s\in S}\{y^*\in Y^*:~s\cdot y^*=\varphi(s)y^*\}\ \text{and}\ \mathcal{X}=\bigcap_{s\in S}\{x^*\in X^*:~ s\cdot x^* =\varphi(s)x^*\}.$$

 Suppose that $\theta\in\mathcal{K}$ and  $\widetilde{\theta}\in X^*$ is an extension of $\theta$. Then
   $\mathbf{\rho}:Y^\bot\longrightarrow (X/Y)^*$
   is an onto isometry and $S$-module morphism, where $$Y^\bot=\{x^*\in X^*|~ \langle y, x^*\rangle=0,~ \emph{\emph{for}}~ \emph{\emph{every}}~ y\in Y\}.$$

    Then
\begin{equation}
 (s\cdot\widetilde{\theta})(y)-(\widetilde{\theta}\cdot s)(y)  =  (s\cdot\widetilde{\theta}-\varphi(s)\widetilde{\theta})(y)=0,
\end{equation}
for all $s\in S$ and $y\in Y$.
Therefore $s\cdot\widetilde{\theta}-\varphi(s)\widetilde{\theta}\in Y^\bot$. Define $D:S\longrightarrow (X/Y)^*$ by $D(s)=\mathbf{\rho}(s\cdot\widetilde{\theta}-\varphi(s)\widetilde{\theta})$, for every $s\in S$. Then
$$D(st)=\rho(st\cdot\widetilde{\theta}-\varphi(st)\widetilde{\theta}),$$
for all $s,t\in S$. On the other hand,
\begin{eqnarray*}
% \nonumber to remove numbering (before each equation)
 s\cdot D(t)+D(s)\cdot t  &=& s\cdot\rho(t\cdot\widetilde{\theta}-\varphi(t)\widetilde{\theta})+\rho(s\cdot\widetilde{\theta}-\varphi(s)\widetilde{\theta})\cdot t \\
   &=& \rho(st\cdot\widetilde{\theta}-\varphi(s)\varphi(t)\widetilde{\theta}) \\
   &=&D(st),
\end{eqnarray*}
for all $s,t\in S$. This implies that  $D$ is a derivation. (i) follows that there exists $\mathfrak{y}\in (X/Y)^*$ such that $D(s)=s\cdot \mathfrak{y}-\mathfrak{y}\cdot s$, for every $s\in S$. Surjectivity of $\rho$ leads to there exists  $\mathfrak{z}\in Y^\bot$  such that $D(s)=s\cdot\rho(\mathfrak{z})-\varphi(s)\rho(\mathfrak{z})$, for all $s\in S$. Now, set $\mathfrak{x}=\widetilde{\theta}-\mathfrak{z}$. Then \begin{equation}\label{}
    \nonumber
   (s\cdot (\widetilde{\theta}-\mathfrak{z})-\varphi(s)(\widetilde{\theta}-\mathfrak{z}))(y)=0,
\end{equation}
for all $s\in S$ and $y\in Y$.
Thus $\mathfrak{x}\in\mathcal{X}$. This completes the proof.

(iii)$\Longrightarrow$ (iv) Consider   $X^*\widehat{\otimes}X$ as a Banach $S$-bimodule by the following actions
\begin{equation}\label{2}
    (f\otimes x)\cdot s=f\otimes x\cdot s\hspace{1cm}\emph{\emph{and}}\hspace{1cm}s\cdot(f\otimes x)=f\otimes \varphi(s)x=\varphi(s)f\otimes x,
\end{equation}
for all $s\in S$, $x\in X$ and  $f\in X^*$. Set $\mathfrak{B}(X)=\{T\in\textbf{B}(X):s\bullet T=\varphi(s)T\}$ where ``$\bullet$" is the module product of $S$ on $\textbf{B}(X)$. Consider  the following sets
$$H:=\overline{\emph{\emph{lin}}}\Big{\{}T^*(f)\otimes x-f\otimes T(x)~:~T\in
{\mathfrak{B}}(X), f\in X^*, x\in X\Big{\}}$$
 and
$$K:=\overline{\emph{\emph{lin}}}\Big{\{}f\otimes x~:~f\in \mathcal{X}\Big{\}},$$
 where by $\overline{\emph{\emph{lin}}}$, we mean the closed linear span. Let $Y$ be the closed linear span of $H$ and $K$. Clearly, $H$ and $K$  are Banach $S$-submodules of $X^*\widehat{\otimes} X$. Therefore the quotient space $Y/H$ is a Banach $S$-submodule of $(X^*\widehat{\otimes}X)/H$. Let $\theta\in(X^*\widehat{\otimes}X)^*$ that satisfies  $\theta(f\otimes x)=f(x)$, for all $x\in X$ and  $f\in X^*$. Then
\begin{equation}\label{3}
       \theta(T^*(f)\otimes x-f\otimes T(x))= T^*(f)(x)-f(T(x))=0,
\end{equation}
for all $x\in X$ and all $f\in X^*$. This means that  $\theta\in H^\bot$. Pick $\vartheta\in((X^*\widehat{\otimes}X)/H)^*$ such that $\vartheta(y+H)=\theta(y)$, for every $y\in X^*\widehat{\otimes}X$. By   \eqref{2},  we have
\begin{eqnarray}\label{4}
\nonumber
  (s\cdot\vartheta-\vartheta\cdot s)(f\otimes x+H) &=&  \vartheta((f\otimes x+H) \cdot s)-\vartheta(s\cdot (f\otimes x+H))  \\
   \nonumber
 &=&  \vartheta(f\otimes x\cdot s+H)-\varphi(s)\vartheta(f\otimes x+H)\\
     &=&  \theta(f\otimes x\cdot s)-\varphi(s)\theta(f\otimes x),
\end{eqnarray}
for all $s\in S, x\in X$ and all $f\in X^*$.
Since $\alpha\in H^\bot$, if we apply \eqref{4}, then  $$s\cdot \vartheta=\varphi(s)\vartheta,$$
 for all $s\in S$ and all $\vartheta\in(Y/H)^*$. The part (ii) implies there is an extension $\widetilde{\vartheta}$ of $\vartheta$ such that $\widetilde{\vartheta}\in(X/H)^*$ and $s\cdot\widetilde{\vartheta}=\varphi(s)\widetilde{\alpha}$. Define $P(f)(x)= \widetilde{\vartheta}(f\otimes x+H)$, for all $x\in X$ and  $f\in X^*$. Clearly, $P$ is bounded. Moreover,
\begin{eqnarray}
 \nonumber
  (P\circ P(f))(x) &=&P(P(f))(x)= P(\widetilde{\vartheta}(f\otimes x+H))=P(f(x))\\
    \nonumber
   &=&  P(f)(x)
\end{eqnarray}
for all $x\in X$ and  $f\in X^*$. This shows that $P$ is a bounded projection from $X^*$ onto $X^*$. Let $T\in\mathfrak{B}(X)$, then by \eqref{3}, we have
\begin{eqnarray}
 \nonumber
 P T^*(f)(x) &=&  \widetilde{\vartheta}(T^*(f)\otimes x)=T^*(f)(x) \\
    \nonumber
 &=&   \vartheta(f\otimes T(x))=\widetilde{\vartheta}(f\otimes T(x)+H)= P(f)(T(x))\\
    \nonumber
    &=& T^*P(f)(x),
\end{eqnarray}
for all $x\in X$ and all $f\in X^*$.
Thus, $P$ commutes with every $T^*\in\mathfrak{B}(X^*)$.

(iv)$\Longrightarrow$(i) Assume that  $X=RUC(S)^*\widehat{\otimes}RUC(S)^*$, which $X$ becomes a Banach $S$-bimodule by the following actions
\begin{equation}\label{5}
    s\cdot(f\otimes g)=\varphi(s) f\otimes g\hspace{1cm}\emph{\emph{and}}\hspace{1cm}(f\otimes g)\cdot s=f\otimes g,
\end{equation}
for all $f,g\in RUC(S)^*$ and $s\in S$. Let $\mathcal{T}=\{r_s:~s\in S\}\cup \{l_s:~s\in S\}$ be a family of bounded linear operators from $X$ into $X$, such that
\begin{equation}\label{5}
    l_s(f\otimes g)=\varphi(s) f\otimes g\hspace{1cm}\emph{\emph{and}}\hspace{1cm}r_s(f\otimes g)=f\otimes g,
\end{equation}
for all $f,g\in RUC(S)^*$ and $s\in S$. Then
\begin{equation}\label{6}
    t\cdot l_s(f\otimes g)=(l_s\cdot t)(f\otimes g),
\end{equation}
and
\begin{equation}\label{6'}
 t\cdot r_s(f\otimes g)=(r_s\cdot t)(f\otimes g),
\end{equation}
for all $f,g\in RUC(S)^*$ and $s,t\in S$. Thus,  every member of $\mathcal{T}$ commutes with the action of $S$ on $X$. Then (iii) implies that there is a bounded surjective projection $P:X^*\longrightarrow X^*$ such that $PT^*=T^*P$, for all $T\in\mathcal{T}$.

 Define $\tau:X^*\longrightarrow X^*$ by $\tau(F)(f\otimes g)=F(g\otimes f)$, for all  $F\in X^*$ and $f,g\in RUC(S)^*$. Then
\begin{eqnarray}\label{6}
 \nonumber
 \tau(F\cdot s)(f\otimes g) &=& (F\cdot s)(g\otimes f)=\varphi(s)F(g\otimes f) \\
 \nonumber
     &=&\tau(F)(\varphi(s)f\otimes g) = \tau(F)(l_s(f\otimes g)) \\
     &=& l_s^*\tau(F)(f\otimes g)
\end{eqnarray}
for all $F\in X^*$, $f,g\in RUC(S)^*$ and $s,t\in S$. Consider the projective mapping $\pi:RUC(S)^*\widehat{\otimes}RUC(S)^*\longrightarrow RUC(S)^*$ defined by $\pi(f\otimes g)=fg$, for all  $f,g\in RUC(S)^*$
  Set $M=\tau^*(P^*(I\otimes I))$, where $I$ is the identity function in $RUC(S)^*$.  Since $\pi^{**}M\in RUC(S)^{***}$,
  \begin{equation}\label{fe1}
    s\cdot \pi^{**}M=\varphi(s)\pi^{**}M,
  \end{equation}
for every $s\in S$. Then by properties of $P$,  we have
\begin{eqnarray}\label{fe}
 \nonumber
 \pi^{**}M(F)&=& \pi^{**}\tau^*(P^*(I\otimes I))(F) =(P^*(I\otimes I))(\tau\pi^{*}(F))\\
 \nonumber
   &=&  (P\tau\pi^{*}F)(I\otimes I)=(\tau\pi^{*}(F))(I\otimes I)\\
   \nonumber
   &=& \pi^{*}(F)(I\otimes I)= F\pi^{**}(I\otimes I)\\
   &=&F(I),
\end{eqnarray}
for every $F\in RUC(S)^{**}$. Now, we set $m=\pi^{**}M|_{RUC(S)}$.
Then \eqref{fe1} implies that $s\cdot m=\varphi(s)m$, for every $s\in S$. Thus,
$$s\cdot m(f)=m(f\cdot s)=\varphi(s)m(f),$$
for every $f\in RUC(S)$ and $s\in S$. The relation \eqref{fe} shows that, for any $f\in RUC(S)$, $\pi^{**}M(f)=I(f)=1$. This implies that $m(\varphi)=1$ and consequently, $S$ is left $\varphi$-amenable.
\end{proof}

For the right case, similarly, by defining the right Hahn-Banach Property, we can prove the above theorem.

%-----------------------------------------------------------------------------------------------------------------------------------------------------
\section*{Acknowledgment}
The authors would like to thank the referee for careful reading of the paper
and for his/her useful suggestions, which greatly improved the presentation of the
paper.
%-----------------------------------------------------------------------------------------------------------------------------------------------------

%-----------------------------------------------------------------------------

\end{document}